
\documentclass[a4paper, 12pt, twoside,openright]{article}



\usepackage{enumitem}
\usepackage[section]{placeins}	
\usepackage{epigraph}
\usepackage[font={small}]{caption}
\usepackage{appendix}

\usepackage{etoolbox}
\patchcmd{\thebibliography}
  {\settowidth}
  {\setlength{\itemsep}{-2pt plus 0.1pt}\settowidth}
  {}{}
\apptocmd{\thebibliography}
  {\small}
  {}{}


\usepackage{amsmath,amssymb,mathrsfs}
\usepackage{amsfonts}			
\usepackage{nicefrac}			

\usepackage{bbm} 

\usepackage{stmaryrd}
\usepackage{calc,ifthen,xspace}


\usepackage[all,cmtip]{xy}
\usepackage{array}
\usepackage{multirow}
\usepackage{booktabs}
\usepackage{colortbl}
\usepackage{tabularx}
\usepackage{multirow}
\usepackage{threeparttable}

\usepackage{graphicx}			
\usepackage{subcaption}
\usepackage{pdfpages}
\usepackage{rotating}
	
\usepackage{tikz}

\usepackage{tikz-cd}
\usetikzlibrary{knots}
\usetikzlibrary{decorations.markings}
	
\usepackage{changepage}

\usepackage{xspace}
\usepackage{textcomp}
\usepackage{array}
\usepackage{hyphenat}

\newcolumntype{C}{>$c<$}

\usepackage[pdftex,pdfborder={0 0 0},
			colorlinks=true,
			linkcolor=blue,
			citecolor=red,
			pagebackref=true,
			]{hyperref}	

\usepackage[top=2.5cm, bottom=2cm, left=3cm, right=2.5cm]{geometry}
\usepackage{cleveref}
\crefformat{section}{\S#2#1#3} 
\crefformat{subsection}{\S#2#1#3}
\crefformat{subsubsection}{\S#2#1#3}


\AtBeginDocument{}

\renewcommand{\leq}{\ensuremath{\leqslant}} 
\renewcommand{\geq}{\ensuremath{\geqslant}}

\let\originalleft\left 
\let\originalright\right
\renewcommand{\left}{\mathopen{}\mathclose\bgroup\originalleft}
\renewcommand{\right}{\aftergroup\egroup\originalright}

\DeclareMathOperator{\Lie}{\mathcal L}
\DeclareMathOperator{\Der}{Der}
\DeclareMathOperator{\ad}{ad}

\DeclareMathOperator{\ima}{Im}
\DeclareMathOperator{\coker}{coker}

\DeclareMathOperator{\Aut}{Aut}

\DeclareMathOperator{\Inn}{Inn}


\DeclareMathOperator{\rk}{rk}
\DeclareMathOperator{\Tors}{Tors}

\DeclareMathOperator{\Sym}{\mathfrak S}
\DeclareMathOperator{\Homeo}{Homeo}
\DeclareMathOperator{\PHomeo}{PHomeo}

\newcommand{\Z}{\mathbb Z}

\newcommand{\D}{\mathbb D}

\usepackage{amsthm}

\makeatletter
\newtheorem*{rep@theorem}{\rep@title}
\newcommand{\newreptheorem}[2]{%
\newenvironment{rep#1}[1]{%
 \def\rep@title{#2 \ref{##1}}%
 \begin{rep@theorem}}%
 {\end{rep@theorem}}}
\makeatother

\theoremstyle{plain}
\newtheorem{theo}{Theorem}[section]
\newreptheorem{theo}{Theorem}
\newtheorem{fact}[theo]{Fact} 
\newtheorem{lem}[theo]{Lemma}
\newtheorem{prop}[theo]{Proposition}
\newreptheorem{prop}{Proposition}
\newtheorem{propdef}[theo]{Proposition-definition}
\newtheorem{cor}[theo]{Corollary}
\newreptheorem{cor}{Corollary}

\newreptheorem{pb}{Problem}

\numberwithin{equation}{theo}

\theoremstyle{definition}
\newtheorem{defi}[theo]{Definition}
\newtheorem{conv}[theo]{Convention}
\newtheorem{nota}[theo]{Notation}

\newtheorem{rmq}[theo]{Remark}






\let\oldpagenumbering\pagenumbering
\renewcommand{\pagenumbering}[1]{%
	\cleardoublepage
	\oldpagenumbering{#1}
}

\author{Jacques \scshape{Darn\'e}}
\title{Braids, inner automorphisms \newline and the Andreadakis problem}
\date{December 6, 2020}

\hypersetup{
    pdfauthor={Jacques Darn\'e},
    pdfsubject={Inner automorphisms},
    pdftitle={Braids, inner automorphisms  and the Andreadakis problem},
    pdfkeywords={algebraic topology, group theory, automorphisms of free groups, braid groups}
}

\begin{document}

\maketitle

\begin{abstract}
In this paper, we generalize the tools that were introduced in \cite{Darne2} in order to study the Andreadakis problem for subgroups of $IA_n$. In particular, we study the behaviour of the Andreadakis problem when we add inner automorphisms to a subgroup of $IA_n$. We notably use this to show that the Andreadakis equality holds for the pure braid group on $n$ strands modulo its center acting on the free group $F_{n-1}$, that is, for the (pure, based) mapping class group of the $n$-punctured sphere acting on its fundamental group. 
\end{abstract}

\section*{Introduction}
\addcontentsline{toc}{section}{Introduction}

In his 1962 PhD.~thesis \cite{Andreadakis}, Andreadakis studied two filtrations on the group of automorphisms of the free group $F_n$. More precisely, they were filtrations on the subgroup $IA_n$ of $\Aut(F_n)$ consisting of automorphisms acting trivially on the abelianization $F_n^{ab} \cong \Z^n$. These filtrations were:
\begin{itemize}[itemsep=-3pt,topsep=3pt]
\item The \emph{lower central series} $IA_n = \Gamma_1(IA_n) \supseteq \Gamma_2(IA_n) \supseteq \cdots$.
\item The filtration $IA_n = \mathcal A_1 \supseteq \mathcal A_2 \supseteq \cdots$ now known as the \emph{Andreadakis filtration}.
\end{itemize}
He showed that there are inclusions $\mathcal A_i \supseteq \Gamma_i(IA_n)$, and he conjectured that these were equalities. This question became known as the \emph{Andreadakis conjecture}, and it turned out to be a very difficult one, which is still nowdays shrouded with mystery. The interest of this question (and its difficulty) lies notably in the fact that the definitions of these filtrations are very different in nature, and thus what we understand about them is too. For instance, it is very easy to test whether a given element lies in some~$\mathcal A_j$, but there is no known efficient procedure for testing whether the same element belongs to $\Gamma_j$ or not. On the other hand, producing elements of $\Gamma_j$ is not difficult, but we do not know any good recipe for producing elements of $\mathcal A_j$ (apart, of course, from the one producing elements of $\Gamma_j$). So far, these difficulties have been overcome only for very small values of $n \geq 3$ and very small degrees to show, using computer calculations, that the conjecture is in fact false \cite{Bartholdi1}.

The same question can be asked for any subgroup $G$ of $IA_n$. Namely, if $G$ is such a subgroup, then we can ask whether the inclusions $\Gamma_i(G) \subseteq G \cap \mathcal A_i$ are equalities. The answer is obviously negative for some subgroups which are embedded in $IA_n$ in a wrong way (take for instance a cyclic subgroup of $\Gamma_2(IA_n)$). But if $G$ is nicely embedded in~$IA_n$, we can hope that these filtrations on $G$ are equal, in which case we say that the subgroup $G$ of $IA_n$ \emph{satisfies the Andreadakis equality}. 

Interesting examples of such subgroups include the pure braid group, embedded in~$IA_n$ \emph{via} Artin's action on the free group, for which the Andreadakis equality was shown in \cite{Darne2}. Another example is the pure welded braid group, also acting on the free group \emph{via} Artin's action. This subgroup is the group of (pure) basis-conjugating automorphisms of $F_n$, also known as the McCool group $P \Sigma_n$. We still do not know whether the Andreadakis equality holds for this one. A version of the latter problem up to homotopy has been considered in \cite{Darne4}. In all these cases, the Andreadakis equality can be seen as a comparison statement between different kinds of invariants of elements of the group (see \cite{Darne4}).

\paragraph*{Automorphisms of free groups.}
Consider the $2$-sphere $\mathbb S^2$ with $n$ marked points, and let us choose a basepoint different from the marked points. The group of isotopy classes of orientation-preserving self-homeomorphisms of $\mathbb S^2$ fixing the base point and each marked point is isomorphic to the quotient $P_n^*$ of the pure braid group on $n$ strands by its center. This group acts canonically on the fundamental group of the sphere with the $n$ marked points removed, which is free on $n-1$ generators. In \cite{Magnus}, this action is shown to be faithful (see also our appendix \cref{section_action_Pn*}). Moreover, the induced action on the homology of the punctured sphere, which is the abelianization of its fundamental group, is trivial. As a consequence, the mapping class group $P_n^*$ identifies with a subgroup of $IA_{n-1}$. Our main goal is the following theorem :

\begin{reptheo}{Andreadakis_for_FnPn}
The subgroup $P_{n+1}^*$ of $IA_n$ satisfies the Andreadakis equality.
\end{reptheo}

It turns out that the subgroup $P_{n+1}^*$ of $IA_n$ is generated by pure braids, together with inner automorphisms of $F_n$. Our strategy of proof relies on this fact. Indeed, our key result (Th.~\ref{key_theo} and Cor.~\ref{cor_to_key_theo}) is 
a theorem allowing us to decide, for a given subgroup $K$ of $IA_n$, whether $K \cdot \Inn(F_n)$ satisfies the Andreadakis equality when $K$ does. The reader should note that such a result allowing us to pass from a subgroup to another one is quite exceptional : the Andreadakis problem (and, more generally, the lower central series) does not usually behave well when passing to smaller or bigger groups. In this regard, although $P_{n+1}^*$ strictly contains $P_n$ and is contained in $P \Sigma_n$, we can hardly see our present result as a step further in the study of the Andreadakis problem for $P \Sigma_n$. It has to be considered as a new interesting example in itself, and also as a good pretext to develop new tools in the study of the Andreadakis problem.

We give two other applications of our key result (Th.~\ref{key_theo} and Cor.~\ref{cor_to_key_theo}). By applying it to the subgroup $K = IA_n^+$ of triangular automorphisms, we get the Andreadakis equality for a somewhat bigger subgroup (Th.~\ref{Andreadakis_for_FnIAn+}). Moreover, by applying it to the subgroup $K = P \Sigma_n^+$ of triangular basis-conjugating automorphisms, we recover the result obtained in \cite{Ibrahim} showing that the Andreadakis equality holds for the group of \emph{partial inner automorphisms} defined and studied in \cite{Bardakov-Neshchadim}.

\paragraph*{Inner automorphisms.}
The question of whether the Andreadakis equality holds can be asked, more generally, for subgroups of $IA_G$, where $G$ is any group, and $IA_G$ is the group of automorphisms of $G$ acting trivially on its abelianization $G^{ab}$. In order to get our results about subgroups of $IA_n$, we need to show that the Andreadakis equality holds for the subgroup $\Inn(F_n)$ of inner automorphisms of $F_n$. Although this result is fairly easy to get (see~\cref{par_Fn}), we choose to develop the general theory of the Andreadakis problem for inner automorphisms, and we show, in particular, the following result (where $\Lie(G)$ denotes the graded Lie ring obtained from the lower central series of $G$):

\begin{repcor}{Andreadakis_for_Inn_LCS}
Let $G$ be a group. The Andreadakis equality holds for $\Inn(G)$ if and only if every central element of $\Lie(G)$ is the class of some central element of $G$.
\end{repcor}

We give several examples and counter-example, the main one being the case of the group $G = P_n$ of pure braids. In order to study it, we give a calculation of the center of $P_n$ which can readily be adapted to a calculation of the center of its Lie ring, and we prove:

\begin{reptheo}{Andreadakis_for_Inn(Pn)}
The subgroup $\Inn(P_n)$ of $IA(P_n)$  ($\subset \Aut(P_n)$) satisfies the Andrea\-dakis equality.
\end{reptheo}

\paragraph{Outline of the paper:}

The first section is devoted to recalling the needed definitions and results from the theory of group filtrations, in particular filtrations on automorphism groups and on braid groups. In \cref{section_inner_aut}, we study the Andreadakis problem for inner automorphisms, which turns out to be very much related to calculations of centers of groups and of their Lie rings. We then turn to the calculation of the center of the pure braid group, of which we give a version that generalizes easily to a calculation of the center of the associated Lie ring; this allows us to solve the Andreadakis problem for inner automorphisms of the pure braid group (\cref{section_centers}). Then, in \cref{section_products}, we prove our key result (Th.~\ref{key_theo}), giving a criterion for deducing the Andreadakis equality for a product of subgroups from the Andreadakis equality for these subgroups. Finally, the last section is devoted to applications to subgroups of automorphisms of free groups, namely triangular automorphisms, triangular basis-conjugating automorphism, and the pure braid group on $n$ strands modulo its center acting on $F_{n-1}$.

In addition to our main results, we put in an appendix a comparison between the Drinfeld-Kohno Lie ring and the Lie ring of so-called \emph{special derivations} of the free Lie ring (which we call \emph{braid-like}), boiling down to some rank calculations. In a second appendix, we write down a new proof of the faithfulness of the action of the braid group on $n$ strands modulo its center on $F_{n-1}$, which involves less calculations than the ones in the literature, and we gather some useful group-theoretic results.

\tableofcontents

\numberwithin{theo}{section}
\section{Reminders}\label{section_reminders}

We recall here some of the basics of the general theory of (strongly central) group filtrations and the Andreadakis problem. Details may be found in \cite{Darne1, Darne2}.

\subsection{Filtrations on groups}

Since the only filtrations we consider in the present paper are \emph{strongly central} ones (in the sense of \cite{Darne1}), we adopt Serre's convention \cite{Serre} and we simply call them  \emph{filtrations}. The systematic study of such filtrations was initiated by Lazard \cite{Lazard}, who called them \emph{$N$-series}.

\begin{nota}
Let $G$ be a group. If $x, y \in G$, we denote by $[x,y]$ their commutator $xyx^{-1}y^{-1}$, and we use the usual exponential notations $x^y = y^{-1}xy$ and ${}^y\! x = yxy^{-1}$ for conjugation in $G$. If $A, B \subseteq G$ are subsets of $G$, we denote by $[A,B]$
 the subgroup generated by commutators $[a,b]$ with $a \in A$ and $b \in B$.
 \end{nota}

\begin{defi}
A \emph{filtration} $G_*$ on a group $G$ is a sequence of nested subgroups $G = G_1 \supseteq G_2 \supseteq G_3  \supseteq \cdots$ satisfying:
\[\forall i,j \geq 1,\ \ [G_i, G_j] \subseteq G_{i+j}.\]
\end{defi}

If $G_*$ and $H_*$ are two filtrations on the same group $G$, we write $G_* \subseteq H_*$ if $G_i \subseteq H_i$ for all $i$. The minimal filtration (for the inclusion relation) on a given group $G$  is its lower central series $\Gamma_*(G)$, defined as usual by $\Gamma_1(G) = G$ and $\Gamma_{i+1}(G) = [G,\Gamma_i(G)]$ when $i \geq 1$. Recall that $G$ is called \emph{nilpotent} (resp.~\emph{residually nilpotent}) if $\Gamma_i(G) = \{1\}$ for some $i$ (resp.~if $\bigcap \Gamma_i(G) = \{1\}$). Since the lower central series is the minimal filtration on $G$, if $G_i = \{1\}$ for some $i$ (resp.~if $\bigcap G_i = \{1\}$) for any filtration $G_*$ on $G = G_1$, then  $G$ is nilpotent (resp.~residually nilpotent).

\begin{conv}
Let $G$ be a group endowed with a filtration $G_*$. Let $g$ be an element of $G$. If there is an integer $d$ such that $g \in G_d - G_{d+1}$, it is obviously unique. We then call $d$ the \emph{degree} of $g$ with respect to $G_*$. The notation $\overline g$ will denote the class of $g$ in some quotient $G_i/G_{i+1}$ ; if the integer $i$ is not specified, it will be assumed that $i = d$, which means that $\overline g$ denotes the only non-trivial class induced by $g$ in some $G_i/G_{i+1}$. If such a $d$ does not exist (that is, if $g \in \bigcap G_i$), we say that $g$ has degree $\infty$ and we put $\overline g = 0$.
\end{conv}

Recall that to a filtration $G_*$ we can associate a graded Lie ring (that is, a graded Lie algebra over $\mathbb Z$):
\begin{propdef}
If $G_*$ is a group filtration, the graded abelian group $\Lie(G_*) := \bigoplus G_i/G_{i+1}$ become a graded Lie ring when endowed with the Lie bracket $[-,-]$ induced by commutators in $G$. Precisely, with the above convention, this bracket is defined by:
\[ \forall x \in G_i,\ \forall y \in G_j,\ [\overline x, \overline y]:= \overline{[x,y]} \in \Lie_{i+j}(G_*),\]
where $\Lie_k(G_*) = G_k/G_{k+1}$ denotes the homogeneous elements of degree $k$ in $\Lie(G_*)$.
\end{propdef}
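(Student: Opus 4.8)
The plan is to verify, one axiom at a time, that the prescribed bracket makes $\Lie(G_*) = \bigoplus G_i/G_{i+1}$ into a Lie ring, the only real tools being two elementary commutator-expansion formulas and the Hall--Witt identity. The single structural fact I would use throughout is that conjugation is trivial on the associated graded: the filtration axiom gives $[G,G_i] = [G_1,G_i] \subseteq G_{i+1}$, so each $G_i$ is normal in $G$, and for $g \in G$, $a \in G_i$ one has ${}^g a \cdot a^{-1} = [g,a] \in G_{i+1}$, whence ${}^g a \equiv a \pmod{G_{i+1}}$. Thus the quotients $G_i/G_{i+1}$ are defined and conjugation by any element of $G$ induces the identity on every graded piece.

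I would first check that $[\overline x, \overline y] := \overline{[x,y]}$ is well defined. For $x \in G_i$ and $y \in G_j$ the axiom gives $[x,y] \in G_{i+j}$, so $\overline{[x,y]} \in G_{i+j}/G_{i+j+1}$ is defined. Replacing $x$ by $xa$ with $a \in G_{i+1}$ and using $[xa,y] = {}^x[a,y] \cdot [x,y]$ together with $[a,y] \in [G_{i+1},G_j] \subseteq G_{i+j+1}$ shows $[xa,y] \equiv [x,y] \pmod{G_{i+j+1}}$; the symmetric computation with $[x,yb] = [x,y] \cdot {}^y[x,b]$ handles the second variable. The same two identities give bilinearity: from $[xx',y] = {}^x[x',y] \cdot [x,y]$ and ${}^x[x',y] \equiv [x',y] \pmod{G_{i+j+1}}$ one reads off $[\overline{xx'},\overline y] = [\overline{x'},\overline y] + [\overline x, \overline y]$ in the abelian group $G_{i+j}/G_{i+j+1}$, and since $\overline{xx'} = \overline x + \overline{x'}$ this is additivity in the first slot, the second being symmetric. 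Antisymmetry is then immediate, since $[y,x] = [x,y]^{-1}$ yields $[\overline y, \overline x] = -[\overline x, \overline y]$.

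The main obstacle is the Jacobi identity, which I would deduce from the Hall--Witt identity---the group-theoretic incarnation of the Jacobi identity---which for our commutator convention may be written
\[
[[x,y^{-1}],z]^{\,y} \cdot [[y,z^{-1}],x]^{\,z} \cdot [[z,x^{-1}],y]^{\,x} = 1
\]
and holds in any group. Taking $x \in G_i$, $y \in G_j$, $z \in G_k$, each factor lies in $G_{i+j+k}$, so I pass to classes in $G_{i+j+k}/G_{i+j+k+1}$. There the outer conjugations disappear by the observation above, while bilinearity gives $\overline{[x,y^{-1}]} = [\overline x, -\overline y] = -[\overline x,\overline y]$, so the first factor contributes $-[[\overline x,\overline y],\overline z]$ and the others cyclically. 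Hence Hall--Witt collapses to
\[
[[\overline x,\overline y],\overline z] + [[\overline y,\overline z],\overline x] + [[\overline z,\overline x],\overline y] = 0,
\]
which is the Jacobi identity. The only point requiring care is the bookkeeping guaranteeing that every auxiliary commutator produced along the way indeed lands in $G_{i+j+k+1}$, so that it truly vanishes in the graded ring; once the reduction is organised as above, this is routine.
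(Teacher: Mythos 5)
Your proof is correct: it is the standard Lazard argument (well-definedness and biadditivity from the commutator expansions $[xa,y]={}^{x}[a,y]\cdot[x,y]$, triviality of conjugation on the associated graded, and Jacobi from the Hall--Witt identity), which is exactly the classical proof; the paper itself states this Proposition-definition without proof, deferring to the literature, so there is nothing to contrast it with. The only cosmetic addition I would make is to note that the alternating property $[\xi,\xi]=0$ (needed for a Lie ring over $\Z$, and slightly stronger than antisymmetry in the presence of $2$-torsion) also holds, since $[x,x]=1$ in $G$ gives it on homogeneous classes and bilinearity plus antisymmetry then gives it in general.
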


\begin{conv}
When no filtration is specified on a group $G$, it is implied that $G$ is endowed with its lower central series $\Gamma_*(G)$. In particular, we denote $\Lie(\Gamma_*(G))$ simply by $\Lie(G)$.
\end{conv}

Since products of commutators become sums of brackets inside the Lie algebra, the following fundamental property follows easily from the definition of the lower central series:
\begin{prop}\label{engdeg1} 
The Lie ring $\Lie(G)$ is \emph{generated in degree $1$}. Precisely, it is generated (as a Lie ring) by $\Lie_1(G) = G^{ab}$. As a consequence, if $G$ is of finite type, then each $\Lie_n(G)$ is too.
\end{prop}

Let us recall that the construction of the associated Lie ring from a filtration is a functor from the category of filtrations and filtration-preversing group morphisms to the category of graded Lie rings. This functor $\Lie : G_* \mapsto \Lie(G_*)$ will be referred to as \emph{the Lie functor}. We will use the fact that it is \emph{exact}. Precisely, if $G_*$, $H_*$ and $K_*$ are group filtrations, a \emph{short exact sequence of filtrations} is a short exact sequence $H_1 \hookrightarrow G_1 \twoheadrightarrow K_1$ of groups, such that the morphisms are filtration-preserving, and such that they induce a short exact sequence of groups
$H_i \hookrightarrow G_i \twoheadrightarrow K_i$
for all integer $i \geq 1$. As a consequence of the nine Lemma in the category of groups, the Lie functor sends a short exact sequence of filtrations to a short exact sequence of graded Lie rings \cite[Prop.~1.24]{Darne1}.

\subsection{Actions and Johnson morphisms}

Before introducing the Andreadakis problem, we introduce one of our main tools in its study, which is the \emph{Johnson morphism} associated to an \emph{action} of a filtration on another one. Recall that the categorical notion of an \emph{action} of an object on another one in a protomodular category leads to the following definition:
\begin{defi}
Let $K_*$ and $H_*$ be filtrations on groups $K = K_1$ and $H = H_1$. An \emph{action} of $K_*$ on $H_*$ is a group action of $K$ on $H$ by automorphisms such that:
\[\forall i, j \geq 1,\ \ [K_i, H_j] \subseteq H_{i+j},\]
where commutators are computed in $H \rtimes K$. Precisely:
\[\forall k \in K,\ \forall h \in H,\ \ [k,h] = (k \cdot h) h^{-1},\]
where $k \cdot h$ denotes the image of $h$ by the action of $k$.
\end{defi}

Given a group action of $K$ on $H$, the above conditions are exactly the ones required for the sequence of subgroups $(H_i \rtimes K_i)_{i \geq 1}$ to be a filtration on $H \rtimes K$, denoted by $H_* \rtimes K_*$. Then $\Lie(H_* \rtimes K_*)$ is a semi-direct product of $\Lie(H_*)$ and $\Lie(K_*)$, encoding an action of $\Lie(K_*)$ on $\Lie(H_*)$ by derivations, described explicitly by the formula:
\[\forall k \in K,\ \forall h \in H,\ \ \overline k \cdot \overline h = \overline{(k \cdot h)h^{-1}}.\]
This action can also be seen as a morphism from $\Lie(K_*)$ to the Lie ring $\Der(\Lie(H_*))$ of derivations of $\Lie(H_*)$, called the \emph{Johnson morphism} associated to the action of  $K_*$ on $H_*$:
\[\tau : \left\{\renewcommand{\arraystretch}{1.5}
\begin{array}{clc}
\Lie(K_*) &\longrightarrow &\Der(\Lie(H_*)) \\
\overline k &\longmapsto & \left(\overline h \mapsto \overline{(k \cdot h)h^{-1}}\right).
\end{array}
\right.\]

Actions of filtrations can be obtained from group actions \emph{via} the following:
\begin{propdef}
Let $K$ be a group acting on another group $H$ by automorphisms, and let $H_*$ be a filtration on $H = H_1$. Then, there is a greatest one among filtrations $K_*$ on a subgroup $K_1$ of $K$ such that the action of $K$ on $H$ induces an action of $K_*$ on $H_*$. This filtration is denoted by $\mathcal A_*(K, H_*)$ and is defined by:
\[\mathcal A_j(K, H_*) = \left\{k \in K \ \middle|\ \forall i \geq 1,\ [k, H_i] \subseteq H_{i+j} \right\}.\]
The filtration $\mathcal A_*(\Aut(H), H_*)$, denoted simply by $\mathcal A_*(H_*)$, is called the \emph{Andreadakis filtration associated to $H_*$}. When furthermore $H_* = \Gamma_* H$, we denote it by $\mathcal A_*(H)$, and we call it the \emph{Andreadakis filtration associated to $H$}. The filtration $\mathcal A_*(H)$ is a filtration on $\mathcal A_1(H) =: IA_H$, which is the group of automorphisms of $H$ acting trivially on $H^{ab}$.
\end{propdef}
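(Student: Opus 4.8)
The plan is to establish the four distinct assertions hidden in this proposition-definition, in order: that each $\mathcal A_j := \mathcal A_j(K,H_*)$ is a subgroup of $K$; that the family $(\mathcal A_j)_j$ is a filtration on $\mathcal A_1$ carrying the required induced action; that it is the \emph{greatest} such; and finally that $\mathcal A_1(H)=IA_H$. Throughout I compute commutators in $H\rtimes K$, where conjugation by $k\in K$ restricts on $H$ to the given action, so that $[k,h]=(k\cdot h)h^{-1}$.

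For the subgroup property I would first observe that the defining condition depends only on the image of $k$ in $\Aut(H)$, so $\mathcal A_j(K,H_*)$ is the preimage of $\mathcal A_j(\Aut(H),H_*)$ under $K\to\Aut(H)$; it thus suffices to treat $K=\Aut(H)$. Note that $\mathcal A_j\subseteq\mathcal A_1$ (as $H_{i+j}\subseteq H_{i+1}$), that $\phi\in\mathcal A_1$ forces $\phi(H_m)\subseteq H_m$ (since $\phi(h)=[\phi,h]h\in H_{m+1}H_m=H_m$ for $h\in H_m$), and that $\phi$ then acts as the identity on each $\gr_m(H_*)=H_m/H_{m+1}$. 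The key lemma — which I expect to be the \emph{main obstacle} — is that these two facts upgrade the inclusion $\phi(H_m)\subseteq H_m$ to an \emph{equality} $\phi(H_m)=H_m$. The tempting proof, inverting $\phi$ as a geometric series in its degree-raising part, breaks down when $H_*$ is not complete; instead I argue by degrees. Given $y\in H_m$, set $x=\phi^{-1}(y)$; if $x\notin H_m$ then $x$ has finite degree $p<m$, and triviality of $\phi$ on $\gr_p$ gives $\bar x=\overline{\phi(x)}=\bar y=0$ in $H_p/H_{p+1}$, contradicting $\deg(x)=p$. Once $\phi(H_{i+j})=H_{i+j}$ is known, closure under products follows from $[\phi\psi,h]={}^{\phi}[\psi,h]\cdot[\phi,h]$, and closure under inverses from the identity $\phi([\phi^{-1},h])=[\phi,h]^{-1}\in H_{i+j}$, whence $[\phi^{-1},h]\in\phi^{-1}(H_{i+j})=H_{i+j}$.

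The induced-action and maximality clauses are then almost formal. The inclusion $[\mathcal A_i,H_j]\subseteq H_{i+j}$ is literally the definition of $\mathcal A_i$, so $\mathcal A_*$ acts on $H_*$; and any filtration $K_*$ on a subgroup of $K$ whose terms satisfy $[K_i,H_j]\subseteq H_{i+j}$ consists, by definition, of elements of $\mathcal A_i$, giving $K_*\subseteq\mathcal A_*$ and hence maximality. The one genuine computation left is the filtration property $[\mathcal A_i,\mathcal A_j]\subseteq\mathcal A_{i+j}$, which I would get from the three-subgroup lemma applied in $H\rtimes K$ to $A=\mathcal A_i$, $B=\mathcal A_j$, $C=H_m$ with $N=H_{i+j+m}$: from $[[\mathcal A_j,H_m],\mathcal A_i]\subseteq[H_{m+j},\mathcal A_i]\subseteq N$ and $[[H_m,\mathcal A_i],\mathcal A_j]\subseteq[H_{m+i},\mathcal A_j]\subseteq N$ one deduces $[[\mathcal A_i,\mathcal A_j],H_m]\subseteq N$ for every $m$, i.e. $[\mathcal A_i,\mathcal A_j]\subseteq\mathcal A_{i+j}$. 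Here one must first check that $N$ is normal in $\langle\mathcal A_i,\mathcal A_j,H_m\rangle$: the groups $\mathcal A_i,\mathcal A_j\subseteq\mathcal A_1$ preserve every $H_k$ by the lemma above, and $[H_m,H_{i+j+m}]\subseteq H_{i+j+2m}\subseteq N$ since $m\geq1$, so all three generating subgroups normalize $N$.

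Finally, for the identification of $\mathcal A_1(H)$ when $H_*=\Gamma_*H$: each $\Gamma_iH$ is characteristic, so every $\phi\in\Aut(H)$ preserves it and induces a graded Lie-ring endomorphism $\bar\phi$ of $\Lie(H)$. By definition $\phi\in\mathcal A_1(H)$ means $\bar\phi$ is the identity in every degree, whereas $\phi\in IA_H$ means $\bar\phi$ is the identity in degree one, i.e. on $H^{ab}$. Since $\Lie(H)$ is generated as a Lie ring by its degree-one part and $\bar\phi$ respects the bracket, triviality in degree one propagates to all degrees, yielding $IA_H=\mathcal A_1(H)$.
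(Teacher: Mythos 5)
Your proof is correct, and all four assertions are genuinely established. Note that the paper itself offers no proof of this proposition-definition: it appears in the ``Reminders'' section and is recalled from \cite{Darne1, Darne2} (this construction goes back to Kaloujnine and Lazard), so the only in-paper trace of an argument is the remark immediately following the statement, namely that $\mathcal A_*(K,H_*) = a^{-1}(\mathcal A_*(H_*))$ for $a: K \to \Aut(H)$ --- which is exactly your opening reduction to $K = \Aut(H)$. Beyond that, your route is the standard one, and you handle correctly the two points where a blind attempt typically stumbles: first, upgrading $\phi(H_m) \subseteq H_m$ to the equality $\phi(H_m) = H_m$ for $\phi \in \mathcal A_1$ by the degree argument (taking $x = \phi^{-1}(y)$ of minimal finite degree $p < m$ and using triviality of $\bar\phi$ on $H_p/H_{p+1}$), rather than by a ``geometric series'' inversion that indeed fails for non-complete filtrations --- and this equality is genuinely needed both for inverse-closure via $\phi([\phi^{-1},h]) = [\phi,h]^{-1}$ and for the normality hypothesis of the three-subgroup lemma; second, verifying that $N = H_{i+j+m}$ is normalized by all three subgroups before invoking that lemma in $H \rtimes K$, including the check $[H_m, H_{i+j+m}] \subseteq H_{i+j+2m} \subseteq N$. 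The identities you use ($[\phi\psi,h] = {}^{\phi}[\psi,h]\cdot[\phi,h]$, the cyclic instantiation of the three-subgroup lemma, and the propagation of triviality from degree one through the bracket for $\mathcal A_1(H) = IA_H$, using that $\Lie(H)$ is generated in degree one for the lower central series) all check out.
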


The following fact was the initial motivation for introducing such filtrations~\cite{Kaloujnine2}:

\begin{fact}\label{A*_stopping}
In the above setting, if $K$ acts faithfully on $H$, and if $H_i = \{1\}$ for some~$i$ (resp.~if $\bigcap H_i = \{1\}$), then $\mathcal A_{i-1}(K, H_*) = \{1\}$ (resp.~$\bigcap \mathcal A_i(K, H_*) = \{1\}$). It particular, $\mathcal A_1(K, H_*)$ must then be nilpotent (resp.~residually nilpotent).
\end{fact}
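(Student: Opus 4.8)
The plan is to prove \cref{A*_stopping} directly from the defining property of the Andreadakis filtration $\mathcal A_*(K, H_*)$, namely $\mathcal A_j(K, H_*) = \{k \in K \mid \forall i \geq 1,\ [k, H_i] \subseteq H_{i+j}\}$, together with the hypothesis that $K$ acts faithfully on $H$. First I would treat the nilpotent case, where $H_i = \{1\}$ for some fixed $i$. Take any $k \in \mathcal A_{i-1}(K, H_*)$. By definition, applying the membership condition with the first index equal to $1$ gives $[k, H_1] \subseteq H_{1 + (i-1)} = H_i = \{1\}$. Since $H_1 = H$, this says $[k, h] = 1$ for every $h \in H$, i.e. $(k \cdot h)h^{-1} = 1$, so $k \cdot h = h$ for all $h \in H$. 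Thus $k$ acts as the identity automorphism of $H$, and by faithfulness of the action $k$ is the trivial element of $K$. Hence $\mathcal A_{i-1}(K, H_*) = \{1\}$.

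Next I would handle the residual case, where $\bigcap_i H_i = \{1\}$. The key observation is that an element $k$ lying in the intersection $\bigcap_j \mathcal A_j(K, H_*)$ satisfies, for every $j$ and in particular with $i = 1$, the inclusion $[k, H] = [k, H_1] \subseteq H_{1+j} = H_{j+1}$. Letting $j$ range over all positive integers, this gives $[k, H] \subseteq \bigcap_{j \geq 1} H_{j+1} = \bigcap_{m \geq 2} H_m = \{1\}$, using $\bigcap_i H_i = \{1\}$. As before, $[k, h] = 1$ for all $h$ forces $k \cdot h = h$, so $k$ acts trivially and faithfulness yields $k = 1$. Therefore $\bigcap_j \mathcal A_j(K, H_*) = \{1\}$.

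Finally, the last assertions about $\mathcal A_1(K, H_*)$ follow immediately from the general principle recalled just before the statement: any filtration $G_*$ with $G_i = \{1\}$ for some $i$ (resp.\ $\bigcap_i G_i = \{1\}$) has nilpotent (resp.\ residually nilpotent) first term $G_1$, since the lower central series is the minimal filtration. Applying this to the filtration $\mathcal A_*(K, H_*)$ on the group $\mathcal A_1(K, H_*)$, the conclusions $\mathcal A_{i-1} = \{1\}$ and $\bigcap \mathcal A_j = \{1\}$ established above give at once that $\mathcal A_1(K, H_*)$ is nilpotent, respectively residually nilpotent.

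I do not expect any genuine obstacle here: the entire content is unwinding the definition of $\mathcal A_j$ at the single index $i = 1$, where $H_1 = H$, and then invoking faithfulness. The only point requiring a moment's care is the index bookkeeping --- that one only needs the condition $[k, H_1] \subseteq H_{1+j}$ and that $1 + (i-1) = i$ --- rather than the full family of inclusions over all $i$. The passage to residual nilpotence is a routine intersection argument once the nilpotent case is in hand.
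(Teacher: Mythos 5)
Your proof is correct: the paper states this as a recalled \emph{Fact} (citing its classical origin) without giving a proof, and your argument --- specializing the defining condition of $\mathcal A_j(K,H_*)$ to the single index $i=1$ so that $[k,H]\subseteq H_{1+j}$, invoking faithfulness to conclude $k=1$, and then using minimality of the lower central series for the nilpotency statements --- is exactly the standard intended argument. Nothing is missing.
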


Remark that if the morphism $a : K \rightarrow \Aut(H)$ represents the action of $K$ on $H$, then  $\mathcal A_*(K,H_*) = a^{-1}(\mathcal A_*(H_*))$.

\subsection{Lower central series of semi-direct products}

We now briefly recall the description of lower central series of semi-direct product, which will be an important ingredient in the proof of our key theorem (Th.~\ref{key_theo}). This description is based on the following general construction:

\begin{propdef}\textup{\cite[Def.~3.3]{Darne2}.}\label{SCD_relative}
Let $G$ be a group, and let $H$ be a normal subgroup of $G$. We define a filtration $\Gamma_*^G (H)$ on $H$ by:
\[\begin{cases} \Gamma_1^G (H):= H, \\ \Gamma_{k+1}^G:= [G, \Gamma_k^G (H)]. \end{cases}\]
\end{propdef}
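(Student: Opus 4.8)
The plan is to verify the two defining properties of a filtration on $H$: that the groups $\Gamma_k^G(H)$ form a descending chain of subgroups of $H$, and that they satisfy the commutator condition $[\Gamma_i^G(H), \Gamma_j^G(H)] \subseteq \Gamma_{i+j}^G(H)$. The preliminary observation that makes everything work is that each term is in fact a \emph{normal} subgroup of $G$, not merely of $H$. I would establish this first, by induction on $k$: the base case $\Gamma_1^G(H) = H$ is normal in $G$ by hypothesis, and if $N := \Gamma_k^G(H)$ is normal in $G$, then so is $\Gamma_{k+1}^G(H) = [G, N]$, since conjugating a generator gives $x[g,n]x^{-1} = [xgx^{-1}, xnx^{-1}] \in [G,N]$ for $x, g \in G$ and $n \in N$. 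The same induction yields $\Gamma_k^G(H) \subseteq H$, because $\Gamma_{k+1}^G(H) = [G, \Gamma_k^G(H)] \subseteq [G, H] \subseteq H$ by normality of $H$ in $G$.

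Granting normality, nestedness is immediate: for $N = \Gamma_k^G(H)$ normal in $G$ one has $[G, N] \subseteq N$, whence $\Gamma_{k+1}^G(H) \subseteq \Gamma_k^G(H)$, so that $H = \Gamma_1^G(H) \supseteq \Gamma_2^G(H) \supseteq \cdots$ is a descending chain of (normal) subgroups of $H$.

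The heart of the argument is the commutator condition, which I would prove by induction on $i$, simultaneously for all $j$, using the three subgroups lemma in its relative form: if $N$ is normal in $G$ and both $[[A,B],C] \subseteq N$ and $[[B,C],A] \subseteq N$, then $[[C,A],B] \subseteq N$. The base case $i = 1$ is $[H, \Gamma_j^G(H)] \subseteq [G, \Gamma_j^G(H)] = \Gamma_{j+1}^G(H)$, using $H \subseteq G$. For the inductive step I write $\Gamma_{i+1}^G(H) = [G, \Gamma_i^G(H)]$ and apply the lemma with $A = G$, $B = \Gamma_i^G(H)$, $C = \Gamma_j^G(H)$, and $N = \Gamma_{i+j+1}^G(H)$ (normal in $G$ by the first step). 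Its two hypotheses reduce to facts already available: on the one hand $[[\Gamma_i^G(H), \Gamma_j^G(H)], G] \subseteq [\Gamma_{i+j}^G(H), G] = \Gamma_{i+j+1}^G(H)$ by the inductive hypothesis and the recursion; on the other hand $[[\Gamma_j^G(H), G], \Gamma_i^G(H)] = [\Gamma_{j+1}^G(H), \Gamma_i^G(H)] \subseteq \Gamma_{i+j+1}^G(H)$ by the inductive hypothesis applied to the same $i$ with $j$ replaced by $j+1$. The conclusion of the lemma then reads $[[G, \Gamma_i^G(H)], \Gamma_j^G(H)] = [\Gamma_{i+1}^G(H), \Gamma_j^G(H)] \subseteq \Gamma_{(i+1)+j}^G(H)$, completing the induction.

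The step I expect to need the most care is this last induction: one must set it up so that the instance with $j+1$ in place of $j$ is genuinely available before it is used (which is why I induct on $i$ for all $j$ at once), and one must invoke the three subgroups lemma \emph{modulo} the normal subgroup $\Gamma_{i+j+1}^G(H)$ rather than in its classical equality form. Once the normality of each $\Gamma_k^G(H)$ in $G$ is secured, the remaining verifications (containment in $H$ and nestedness) are routine.
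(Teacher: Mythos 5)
Your proof is correct. The paper itself gives no argument for this statement --- it is recalled verbatim from \cite[Def.~3.3]{Darne2} --- and the verification you supply (normality of each $\Gamma_k^G(H)$ in $G$, then the strong centrality via the three subgroups lemma modulo a normal subgroup, with induction on $i$ uniformly in $j$) is exactly the standard argument one would expect there; it parallels the classical proof that the lower central series is strongly central. One cosmetic remark: you state the three subgroups lemma with hypotheses $[[A,B],C],\ [[B,C],A] \subseteq N$ and conclusion $[[C,A],B] \subseteq N$, but then apply it with the roles permuted, deducing $[[A,B],C] \subseteq N$ from the other two containments; this is harmless since the lemma is invariant under cyclic permutation of $(A,B,C)$, but it would be cleaner to state it in the symmetric form ``any two of the three containments imply the third''.
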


If a group $K$ acts on another group $H$ by automorphisms, then we denote $\Gamma^{H \rtimes K}_*(H)$ only by  $\Gamma^K_*(H)$. It is the minimal filtration on $H$ which is acted upon by $\Gamma_*(K)$ \emph{via} the action of $K$ on $H$.
\begin{prop}\textup{\cite[Prop.~3.4]{Darne2}.}\label{lcs_of_sdp}
Let $K$ be a group acting on another group $H$ by automorphisms. Then:
\[\forall i \geq 1,\ \Gamma_i(H \rtimes K) = \Gamma^K_i(H) \rtimes \Gamma_i(K).\]
\end{prop}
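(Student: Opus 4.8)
The plan is to prove the equality by induction on $i$, establishing the two inclusions separately. Throughout I write $G = H \rtimes K$ and identify $H$ and $K$ with their images in $G$. First I would record that the right-hand side genuinely is a subgroup of $G$: by \ref{SCD_relative} each $\Gamma_k^K(H)$ is obtained by repeatedly bracketing with $G$, hence is normal in $G$, so in particular it is normalized by $\Gamma_i(K) \subseteq K$ and the expression $\Gamma_i^K(H) \rtimes \Gamma_i(K)$ makes sense as a subgroup. The base case $i = 1$ is the tautology $G = H \rtimes K$.

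For the inclusion $\supseteq$, I would treat the two factors separately. The inclusion $K \hookrightarrow G$ is a group homomorphism, so it carries $\Gamma_i(K)$ into $\Gamma_i(G)$. For the $H$-factor I would run a short sub-induction: $\Gamma_1^K(H) = H \subseteq G$, and if $\Gamma_i^K(H) \subseteq \Gamma_i(G)$ then, directly from the recursive definition, $\Gamma_{i+1}^K(H) = [G, \Gamma_i^K(H)] \subseteq [G, \Gamma_i(G)] = \Gamma_{i+1}(G)$. Since $\Gamma_i(G)$ is a subgroup containing both $\Gamma_i^K(H)$ and $\Gamma_i(K)$, it contains their product, which as a subset of $G$ is exactly $\Gamma_i^K(H) \rtimes \Gamma_i(K)$.

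The reverse inclusion $\subseteq$ is the crux and is again proven by induction. Using the inclusion $\supseteq$ already established together with the inductive hypothesis, I may assume the equality $\Gamma_i(G) = \Gamma_i^K(H) \rtimes \Gamma_i(K)$, so that $\Gamma_{i+1}(G) = [G, \Gamma_i^K(H) \rtimes \Gamma_i(K)]$. The target $\Gamma_{i+1}^K(H) \rtimes \Gamma_{i+1}(K)$ is normal in $G$ (its $H$-factor is normal in $G$, and one checks the $K$-factor is normalized using $[H, \Gamma_{i+1}(K)] \subseteq \Gamma_{i+1}^K(H)$), so it is enough to show that it contains the four families of basic commutators $[x, y]$ with $x \in \{H, K\}$ and $y \in \{\Gamma_i^K(H), \Gamma_i(K)\}$; an arbitrary commutator then reduces to these via the identities $[xy,z] = {}^x[y,z]\,[x,z]$ and $[x,yz] = [x,y]\,{}^y[x,z]$ together with the normality of the target. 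Here $[H, \Gamma_i^K(H)]$ and $[K, \Gamma_i^K(H)]$ both lie in $[G, \Gamma_i^K(H)] = \Gamma_{i+1}^K(H)$ by definition; $[K, \Gamma_i(K)] = \Gamma_{i+1}(K)$; and the mixed term $[H, \Gamma_i(K)]$ lands in $\Gamma_{i+1}^K(H)$ because $\Gamma_*^K(H)$ is the minimal filtration acted upon by $\Gamma_*(K)$, i.e.\ $[\Gamma_i(K), \Gamma_1^K(H)] \subseteq \Gamma_{i+1}^K(H)$.

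I expect the main obstacle to be precisely this mixed term together with the commutator-of-products bookkeeping: one must be careful that the reduction of an arbitrary commutator $[g, b]$ (with $g \in G$ and $b \in \Gamma_i^K(H)\rtimes\Gamma_i(K)$) to the four basic types only produces conjugates that remain inside the target subgroup, which is why the normality of $\Gamma_{i+1}^K(H) \rtimes \Gamma_{i+1}(K)$ in $G$ must be verified before invoking the generator-wise check. The control of the $K$-coordinate can alternatively be obtained cleanly from the projection $G \twoheadrightarrow K$, which sends $\Gamma_i(G)$ onto $\Gamma_i(K)$ and thereby reduces the problem to identifying $\Gamma_i(G) \cap H$, but the $H$-coordinate still requires the commutator analysis above.
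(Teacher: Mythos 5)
Your argument is correct. Note first that the paper itself gives no proof of this proposition: it is recalled verbatim from \cite[Prop.~3.4]{Darne2}, so there is no in-paper argument to compare against; yours is the standard one and is essentially what that reference does. The structure is sound: the easy inclusion $\supseteq$ by a sub-induction on $\Gamma_*^K(H)$, then the reverse inclusion by reducing $[G,\Gamma_i(G)]$ to the four basic commutator types after checking that the target $\Gamma_{i+1}^K(H)\rtimes\Gamma_{i+1}(K)$ is normal in $G$ (a point that is indeed easy to overlook and that you rightly isolate). The one place where you lean on an external input is the containment $[\Gamma_j(K),H]\subseteq\Gamma_{j+1}^K(H)$, used both for the mixed term and for the normality of the target. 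This is exactly the assertion, made in the paper just after Proposition-Definition \ref{SCD_relative}, that $\Gamma_*^K(H)$ is acted upon by $\Gamma_*(K)$; the paper states it without proof, and it is not a tautology --- a fully self-contained treatment would establish it by a double induction via the three subgroups lemma (taking $N=\Gamma_{i+j+1}^K(H)$, which is normal in $G$, and splitting $[[K,\Gamma_{j-1}(K)],\Gamma_i^K(H)]$ into the two other triple commutators). Since you cite it as a stated property rather than rederiving it, your proof is complete relative to the paper's recollections, but that lemma is the genuine technical content hiding behind the ``mixed term'' you correctly identify as the crux.
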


Moreover, under the right conditions, $\Gamma^K_*(H)$ is in fact equal to $\Gamma_*(H)$:
\begin{prop}\textup{\cite[Prop.~3.5]{Darne2}.}\label{lcs_of_adp}
Let $K$ be a group acting on another group $H$ by automorphisms. The following conditions are equivalent:
\begin{itemize}[itemsep=-3pt,topsep=3pt]
\item The action of $K$ on $H^{ab}$ is trivial.
\item $[K,H] \subseteq [H,H] = \Gamma_2(H)$.
\item The action of $K$ on $H$ induces an action of $\Gamma_*K$ on $\Gamma_*H$.
\item $\Gamma_*^K(H) = \Gamma_*(H)$.
\item $\forall i \geq 1,\ \Gamma_i(H \rtimes K) = \Gamma_i(H) \rtimes \Gamma_i(K)$.
\item $\Lie(H \rtimes K) \cong \Lie(H) \rtimes \Lie(K).$ 
\item $(H \rtimes K)^{ab} \cong H^{ab} \times K^{ab}.$
\end{itemize}
When these conditions are satisfied, we say that the semi-direct product $H \rtimes K$ is an \emph{almost-direct} one.
\end{prop}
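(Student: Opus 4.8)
The plan is to prove the seven conditions equivalent by establishing the cycle of implications $(2) \Rightarrow (3) \Rightarrow (4) \Rightarrow (5) \Rightarrow (6) \Rightarrow (7) \Rightarrow (2)$, supplemented by the tautological equivalence $(1) \Leftrightarrow (2)$. The latter is immediate from the definitions: the action of $K$ on $H^{ab}$ sends $\overline h$ to $\overline{k \cdot h}$, so it is trivial precisely when $(k \cdot h)h^{-1} = [k,h] \in \Gamma_2(H)$ for all $k \in K$ and $h \in H$, which is exactly the statement that the subgroup $[K,H]$ generated by these commutators lies in $\Gamma_2(H)$. All the remaining implications except one are formal manipulations with the results already recalled.

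The one step requiring genuine computation, and the step I expect to be the crux, is $(2) \Rightarrow (3)$: from the single inclusion $[K,H] \subseteq \Gamma_2(H)$ I must produce $[\Gamma_i(K), \Gamma_j(H)] \subseteq \Gamma_{i+j}(H)$ for all $i,j \geq 1$, with all commutators computed in $H \rtimes K$. The tool is the three subgroups lemma, applied relative to the subgroups $\Gamma_m(H)$, which are normal in $H \rtimes K$ since they are characteristic in the normal subgroup $H$. I would argue by a double induction. First, fixing $i = 1$ and inducting on $j$: writing $\Gamma_{j+1}(H) = [H, \Gamma_j(H)]$, the three subgroups lemma bounds $[\Gamma_{j+1}(H), K]$ by $\Gamma_{j+2}(H)$ once one knows that $[[\Gamma_j H, K], H] \subseteq \Gamma_{j+2}H$ (which follows from the inductive hypothesis $[K, \Gamma_j H] \subseteq \Gamma_{j+1}H$ and the filtration axiom) and that $[[K,H], \Gamma_j H] \subseteq \Gamma_{j+2}H$ (which follows from $(2)$ and the filtration axiom). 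With the case $i = 1$ in hand, a second induction on $i$—again via the three subgroups lemma, now expanding $\Gamma_{i+1}(K) = [K, \Gamma_i(K)]$—yields the general inclusion.

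The remaining implications are formal. For $(3) \Rightarrow (4)$, condition $(3)$ says precisely that $\Gamma_*(H)$ is a filtration on $H$ acted upon by $\Gamma_*(K)$; since $\Gamma^K_*(H)$ is the minimal such filtration, this gives $\Gamma^K_*(H) \subseteq \Gamma_*(H)$, while the reverse inclusion holds for any action, by the induction $\Gamma_{k+1}H = [H, \Gamma_k H] \subseteq [H \rtimes K, \Gamma^K_k H] = \Gamma^K_{k+1}H$. For $(4) \Rightarrow (5)$, one substitutes $\Gamma^K_i(H) = \Gamma_i(H)$ into Prop.~\ref{lcs_of_sdp}. For $(5) \Rightarrow (6)$, condition $(5)$ makes $\Gamma_* H \hookrightarrow \Gamma_*(H \rtimes K) \twoheadrightarrow \Gamma_* K$ a short exact sequence of filtrations, split by the filtration-preserving section $K \hookrightarrow H \rtimes K$; applying the exact Lie functor gives the split short exact sequence of graded Lie rings asserted in $(6)$. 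For $(6) \Rightarrow (7)$, I read off the degree-one part: the degree-one component of the Lie ring of a filtration is its abelianization, and the degree-one part of a semidirect product of graded Lie rings is the direct sum, so $(6)$ restricts to $(H \rtimes K)^{ab} \cong H^{ab} \times K^{ab}$. Finally $(7) \Rightarrow (2)$ follows from the general computation $(H \rtimes K)^{ab} \cong (H/\Gamma^K_2(H)) \times K^{ab}$ together with $\Gamma^K_2(H) = [H \rtimes K, H] = \Gamma_2(H) \cdot [K,H]$: the natural surjection onto $H^{ab} \times K^{ab}$ is an isomorphism exactly when $\Gamma^K_2(H) = \Gamma_2(H)$, that is, when $[K,H] \subseteq \Gamma_2(H)$. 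This closes the cycle and establishes all the equivalences.
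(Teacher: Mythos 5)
Your proof is correct and follows what is essentially the canonical route, which is also that of the source: the paper states this proposition without proof, recalling it from \cite[Prop.~3.5]{Darne2}, and the argument there proceeds just as yours does --- $(1)\Leftrightarrow(2)$ read off from $[k,h]=(k\cdot h)h^{-1}$, the crux $(2)\Rightarrow(3)$ by double induction via the three subgroups lemma applied with the subgroups $\Gamma_m(H)$ (normal in $H\rtimes K$ because characteristic in the normal subgroup $H$), and the remaining implications obtained formally from the minimality of $\Gamma_*^K(H)$, from Proposition~\ref{lcs_of_sdp}, from the exactness of the Lie functor, and from the degree-one identifications. One cosmetic slip: in $(7)\Rightarrow(2)$ the canonical surjection runs from $H^{ab}\times K^{ab}$ onto $(H\rtimes K)^{ab}\cong \left(H/\Gamma_2^K(H)\right)\times K^{ab}$, not in the direction you wrote, though your criterion --- it is an isomorphism exactly when $\Gamma_2^K(H)=\Gamma_2(H)$, i.e.\ when $[K,H]\subseteq\Gamma_2(H)$ --- is the right one; note also that your argument tacitly (and correctly) reads the isomorphisms in $(6)$ and $(7)$ as the canonical ones, which is the intended interpretation, since with abstract isomorphisms $(7)\Rightarrow(2)$ can fail for infinitely generated groups.
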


\subsection{The Andreadakis problem}

Let $G_*$ be a group filtration. The Andreadakis problem is concerned with the comparison of two filtrations on $\mathcal A_1(G_*)$, namely the Andreadakis filtration $\mathcal A_*(G_*)$ and the filtration $\Gamma_*(\mathcal A_1(G_*))$. The latter is the minimal filtration on the group $\mathcal A_1(G_*)$, hence $\mathcal A_*(G_*)$ contains $\Gamma_*(\mathcal A_1(G_*))$. Now, if $K$ is a subgroup of $\mathcal A_1(G_*)$ (that is, $K$ acts on $G$ by automorphisms preserving the $G_i$, and the induced action on $\Lie(G_*)$ is trivial), then we can restrict these fitrations to $K$, and the filtrations so obtained must contain $\Gamma_*(K)$, which is the minimal filtration on $K$.

\begin{defi}
Let $G_*$ be a group filtration and $K$ be a subgroup of $\mathcal A_1(G_*)$. We say that $K$ \emph{satisfies the Andreadakis equality with respect to $G_*$} if the following inclusions are equalities:
\[\Gamma_*(K)\ \subseteq\ K \cap \Gamma_*(\mathcal A_1(G_*)) \ \subseteq\ K \cap \mathcal A_*(G_*).\]
If $K$ is a subgroup of $IA_G$, we simply say that $K$ \emph{satisfies the Andreadakis equality} when it does with respect to $\Gamma_*(G)$.
\end{defi}

\begin{rmq}
This definition can be generalized to groups acting on $G$ (in a possibly non-faithful way). Precisely, let $K$ act on a group $G$ \emph{via} a morphism $a : K \rightarrow \Aut(G)$, and let $G$ be endowed with a filtration $G_*$. Suppose that $a(K) \subseteq \mathcal A_1(G_*)$ (that is, $K$ acts by automorphisms preserving the filtration, and the induced action on $\Lie(G_*)$ is trivial). Then we get inclusions of filtrations on $K$:
\[\Gamma_*(K)\ \subseteq\ a^{-1}\left(\Gamma_*(\mathcal A_1(G_*))\right) \ \subseteq\ a^{-1}(\mathcal A_*(G_*)) = \mathcal A_*(K, G_*).\]
However, since $\Gamma_*(a(K)) = a(\Gamma_*(K))$, these filtrations are equal if and only if $a(K)$ satisfies the Andreadakis equality with respect to $G_*$, so we can (and will) focus on subgroups of $\Aut(G)$ when studying the difference between such filtrations.
\end{rmq}

The following result is deduced easily from the definitions, and will be our main tool in proving the Andreadakis equality for subgroups of $IA_G$:

\begin{prop} \textup{\cite[Lem.~1.28]{Darne1}.}\label{Johnson_inj}
Let $K_*$ and $H_*$ be group filtrations. The Johnson morphism associated to a given action of $K_*$ on $H_*$ is injective if and only if $K_* = \mathcal A_*(K_1, H_*)$.
\end{prop}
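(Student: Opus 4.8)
The plan is to compute the kernel of the Johnson morphism $\tau$ degree by degree and to match it against the gap between $K_*$ and $\mathcal A_*(K_1, H_*)$. Write $\mathcal A_* := \mathcal A_*(K_1, H_*)$. First I would record two preliminary observations. Since an action of $K_*$ on $H_*$ means precisely that $[K_i, H_j] \subseteq H_{i+j}$ for all $i,j$, we have $K_i \subseteq \mathcal A_i$ for every $i$, i.e.\ $K_* \subseteq \mathcal A_*$; taking $i=1$ gives $K_1 \subseteq \mathcal A_1$, and as $\mathcal A_1 \subseteq K_1$ by definition, the two filtrations share the same bottom group $K_1 = \mathcal A_1$.

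The heart of the argument is an explicit description of $\ker \tau$. Fix $k \in K_1$ of degree $d$ with respect to $K_*$, so $k \in K_d \setminus K_{d+1}$. By definition $\tau(\overline k)$ is the derivation sending $\overline h \in H_i/H_{i+1}$ to $\overline{[k,h]} \in H_{i+d}/H_{i+d+1}$. Hence $\tau(\overline k) = 0$ if and only if $[k,h] \in H_{i+d+1}$ for every $i$ and every $h \in H_i$, that is, if and only if $[k, H_i] \subseteq H_{i+d+1}$ for all $i$, which is exactly the condition $k \in \mathcal A_{d+1}$. In other words, the degree-$d$ part of $\ker \tau$ is $(K_d \cap \mathcal A_{d+1})/K_{d+1}$.

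Both implications then follow. For the \emph{if} direction, if $K_* = \mathcal A_*$ then in each degree $K_d \cap \mathcal A_{d+1} = \mathcal A_d \cap \mathcal A_{d+1} = \mathcal A_{d+1} = K_{d+1}$, so $\ker \tau = 0$. For the \emph{only if} direction, I would argue by a minimal counterexample: assume $\tau$ injective but $K_* \neq \mathcal A_*$, and let $j$ be the least index with $K_j \subsetneq \mathcal A_j$. Since $K_1 = \mathcal A_1$ we have $j \geq 2$ and $K_{j-1} = \mathcal A_{j-1}$. Choosing $k \in \mathcal A_j \setminus K_j$, the inclusion $\mathcal A_j \subseteq \mathcal A_{j-1} = K_{j-1}$ forces $k$ to have $K_*$-degree exactly $j-1$, so its class $\overline k \in K_{j-1}/K_j$ is nonzero; yet $k \in \mathcal A_j = \mathcal A_{(j-1)+1}$, so the kernel computation yields $\tau(\overline k) = 0$, contradicting injectivity.

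The only genuinely delicate point I expect is the degree bookkeeping in the kernel computation: one must be careful that the chosen element of $\mathcal A_j \setminus K_j$ has $K_*$-degree $j-1$ rather than $j$, which is precisely what lets the minimal-counterexample step close up. Everything else reduces to unwinding the definitions of $\tau$ and of $\mathcal A_*$ together with the containment $K_* \subseteq \mathcal A_*$ noted at the outset.
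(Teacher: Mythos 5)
Your proof is correct: the computation that the degree-$d$ part of $\ker\tau$ is exactly $(K_d\cap\mathcal A_{d+1}(K_1,H_*))/K_{d+1}$ is the crux, and the minimal-index argument for the converse (using $K_1=\mathcal A_1(K_1,H_*)$ to ensure the chosen element has $K_*$-degree exactly $j-1$) closes correctly. The paper itself only cites this statement from \cite[Lem.~1.28]{Darne1} without reproving it, but your argument is the standard unwinding of the definitions that the reference uses, so there is nothing to flag.
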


\paragraph{The Andreadakis problem for automorphisms of free groups.} The classical setting is the one when $G = F_n$ is the free group on $n$ generators, and $G_*$ is its lower central series. Then $IA_G$, denoted by $IA_n$, is the subgroup of $\Aut(F_n)$ made of automorphisms acting trivially on $F_n^{ab} \cong \mathbb Z^n$. The Andreadakis filtration associated to $F_n$ is a filtration on $IA_n$ simply denoted by $\mathcal A_*$ and referred to as \emph{the Andreadakis filtration}. Recall that $F_n$ is residually nilpotent, which implies that $\bigcap \mathcal A_i = \{1\}$, thus $IA_n$ is residually nilpotent (Fact \ref{A*_stopping}). Since the Lie algebra of $F_n$ is the free Lie ring $\mathfrak L_n$ on $n$ generators, the Johnson morphism associated to the action of $\mathcal A_*$ on $\Gamma_*(F_n)$ is an injection (Prop.~\ref{Johnson_inj} above) $\tau : \Lie(\mathcal A_*) \hookrightarrow \Der(\mathfrak L_n).$

\subsection{Braids}

We gather here the results we need about Artin's braid groups and filtrations on them. Our main reference here is Birman's book \cite{Birman}. The reader can also consult the original papers of Artin \cite{Artin25, Artin47}.

\subsubsection{Generalities}\label{reminders_braids}

We denote Artin's braid group by $B_n$ and the subgroup of pure braids by $P_n$. Recall that $B_n$ is generated by $\sigma_1, ..., \sigma_{n-1}$ and $P_n$ is generated by the $A_{ij}$ (for $i < j$), which are drawn as follows:

\[
\renewcommand{\arraystretch}{1.5}
\begin{array}{|C|C|}
\hline
$\sigma_i$
&$A_{ij} = {}^{(\sigma_{n-1} \cdots \sigma_{i+1})}\!\sigma_i^2$
\\
\hline
\begin{tikzpicture}[scale=0.9, every node/.style={scale=0.9}]
\path (2, 3) (2, -1);

\node[draw=none] at (-2,1) {$\cdots$};
\node[draw=none] at (3,1) {$\cdots$};

\begin{knot}[clip width = 6, flip crossing = 1]
\strand[very thick]  (-1, 0)  -- (-1, 2)
   node[at end,circle, fill, inner sep=2pt]{}
   node[at start,circle, fill, inner sep=2pt]{}
   node[at end, above=2, scale=0.85]{$i-1$};
   
\strand[very thick]  (0, 0) .. controls  +(0, 1) and +(0, -1) .. (1, 2)
   node[at end,circle, fill, inner sep=2pt]{}
   node[at start,circle, fill, inner sep=2pt]{}
   node[at end, above=2, scale=0.85]{$i+1$};
   
\strand[very thick] (1 ,0) .. controls  +(0, 1) and +(0, -1) .. (0, 2)
   node[at end,circle, fill, inner sep=2pt]{}
   node[at start,circle, fill, inner sep=2pt]{}
   node[at end, above=2, scale=0.85]{$i$};
   
\strand[very thick]  (2, 0)  -- (2, 2)
   node[at end,circle, fill, inner sep=2pt]{}
   node[at start,circle, fill, inner sep=2pt]{}
   node[at end, above=2, scale=0.85]{$i+2$};
\end{knot}
\end{tikzpicture}
&
\begin{tikzpicture}[scale=0.9, every node/.style={scale=0.9}]
\path (2, 4) (2, -.5);
\node[draw=none] at (-2,1.5) {$\cdots$};
\node[draw=none] at (2,1.5) {$\cdots$};
\node[draw=none] at (7,1.5) {$\cdots$};

\begin{knot}[clip width = 6, flip crossing = 1]
\strand[very thick]  (-1, 0)  -- (-1, 3)
   node[at end,circle, fill, inner sep=2pt]{}
   node[at start,circle, fill, inner sep=2pt]{};
\strand[very thick]  (0, 0)  -- (0, 3)
   node[at end,circle, fill, inner sep=2pt]{}
   node[at start,circle, fill, inner sep=2pt]{}
   node[at end, above=2, scale=0.85]{$i$};
\strand[very thick]  (1, 0)  -- (1, 3)
   node[at end,circle, fill, inner sep=2pt]{}
   node[at start,circle, fill, inner sep=2pt]{};
\strand[very thick]  (3, 0)  -- (3, 3)
   node[at end,circle, fill, inner sep=2pt]{}
   node[at start,circle, fill, inner sep=2pt]{};
\strand[very thick]  (4, 0)  -- (4, 3)
   node[at end,circle, fill, inner sep=2pt]{}
   node[at start,circle, fill, inner sep=2pt]{};
\strand[very thick]  
(5, 0)  .. controls  +(0, 1) and +(0, -1) .. (-.4, 1.5)
    node[at start,circle, fill, inner sep=2pt]{}
.. controls  +(0, 1) and +(0, -1) .. (5, 3)
   node[at end,circle, fill, inner sep=2pt]{}
   node[at end, above=2, scale=0.85]{$j$};
\strand[very thick]  (6, 0)  -- (6, 3)
   node[at end,circle, fill, inner sep=2pt]{}
   node[at start,circle, fill, inner sep=2pt]{};
\end{knot}
\end{tikzpicture}  \\ \hline
\end{array}
\]

\begin{conv}
It is often convenient to have $A_{ij}$ defined for all $i,j$ (not only for~$i < j$), using the formulas $A_{ji} = A_{ij}$ and $A_{ii} = 1$.
\end{conv}

\begin{conv}
Drawing braids from top to bottom corresponds to taking products from left to right. Otherwise said, the product $\alpha\beta$ denotes the braid obtained by putting $\alpha$ on top of $\beta$.
\end{conv}

Forgetting the $(n+1)$-th strand induces a projection $P_{n+1} \twoheadrightarrow P_n$ which is split (a section is given by adding a strand away from the other ones). The kernel of this projection identifies with the fundamental group of the plane with $n$ punctures, which is the free group $F_n$. We thus get a decomposition into a semi-direct product:
\[P_{n+1} \cong F_n \rtimes P_n,\]
which encodes an action of $P_n$ on $F_n$ \emph{via} automorphisms. A basis of $F_n$ is given by the elements $x_i := A_{i, n+1}$. A classical result of Artin \cite[Cor.~1.8.3 and Th.~1.9]{Birman} says that this action is faithful, and its image is exactly the group $\Aut^{\partial}_C(F_n)$ of automorphisms preserving the conjugacy class of each generator and fixing the following \emph{boundary element}:

\begin{align*}
\begin{tikzpicture}
\node[draw=none] at (2,1.5) {$\cdots$};
\begin{knot}[clip width = 6, flip crossing/.list={1,3,5}]
\strand[very thick]  (0, 0)  -- (0, 3)
   node[at end,circle, fill, inner sep=2pt]{}
   node[at start,circle, fill, inner sep=2pt]{};
\strand[very thick]  (1, 0)  -- (1, 3)
   node[at end,circle, fill, inner sep=2pt]{}
   node[at start,circle, fill, inner sep=2pt]{};
\strand[very thick]  (3, 0)  -- (3, 3)
   node[at end,circle, fill, inner sep=2pt]{}
   node[at start,circle, fill, inner sep=2pt]{};
\strand[very thick]  
(4, 0)  .. controls  +(0, 1) and +(0, -1) .. (-.4, 1.5)
    node[at start,circle, fill, inner sep=2pt]{}
.. controls  +(0, 1) and +(0, -1) .. (4, 3)
   node[at end,circle, fill, inner sep=2pt]{}
   node[at end, above=2, scale=0.85]{$n+1$};
\end{knot}
\end{tikzpicture} \\[0.7em]
\partial_n := x_1 \cdots x_n = A_{1,n+1} \cdots A_{n,n+1}.
\end{align*}
Since $P_n \cong \Aut^{\partial}_C(F_n)$, such automorphisms are also called \emph{braid automorphisms}. We will often identify $P_n$ with $\Aut^{\partial}_C(F_n)$ in the sequel. 

Remark that their are many possible choices of generators of $P_n$, but the choices that we have made are coherent : they allow us to interpret the above split projection $P_{n+1} \twoheadrightarrow P_{n}$ (defined by forgetting the $(n+1)$-th strand) as the projection from $\Aut^{\partial}_C(F_{n+1})$ onto $\Aut^{\partial}_C(F_n)$ induced by $x_{n+1} \mapsto 1$ (precisely, any braid automorphism $\beta$ sends $x_{n+1}$ to one of its conjugates, thus preserves the normal closure of $x_{n+1}$ and induces an automorphism of $F_{n+1}/x_{n+1} \cong F_n$). The section of this projection defined by adding a strand corresponds to extending canonically automorphisms of $F_n$ to automorphisms of $F_{n+1}$ fixing $x_{n+1}$.

\begin{rmq}
The free subgroup of $B_{n+1}$ generated by the $A_{i, n+1}$ is in fact normalized not only by $P_n$ but also by $B_n$. The corresponding action of $B_n$ on $F_n$ (induced by conjugation in $B_{n+1}$) is faithful, and its image is the group of automorphisms of $F_n$ permuting the conjugacy classes of the $x_i$ and fixing the boundary element.
\end{rmq}

Recall that $B_n$ and $P_n$ can be seen as fundamental groups of configuration spaces. Namely, if $\D$ is the closed disc and $F_n(\D^{\mathrm o}) := \{(x_1, ..., x_n) \in \D^{\mathrm o} \ |\ \forall i \neq j,\ x_i \neq x_j\}$ is the usual configuration space, $P_n$ (resp.~$B_n$) is the fundamental group of $F_n(\D^{\mathrm o})$ (resp.~of its quotient $F_n(\D^{\mathrm o})/\Sym_n$ by the symmetric group). 

They can also be interpreted as Mapping Class Groups. Indeed, if we fix a base configuration in the open disk, evaluation at this base configuration gives a continuous map from the space $\Homeo_\partial(\D)$ of self-homeomorphisms of the disk fixing the boundary pointwise to $F_n(\D^{\mathrm o})$. This map is a locally trivial fibration, whose fiber is the subspace $\PHomeo_\partial(\D, n)$ of homeomorphisms fixing $n$ points of $\D^{\mathrm o}$. Using Alexander's trick, one can use the long exact sequence in homotopy to get an isomorphism:
\[P_n \cong \pi_0(\PHomeo_\partial(\D, n)).\]
This extends to an isomorphism $B_n \cong \pi_0(\Homeo_\partial(\D, n))$ identifying $B_n$ with isotopy classes of self-homeomorphisms of the disk fixing the boundary pointwise and permuting $n$ given points of $\D^{\mathrm o}$.

With this point of view, the Artin action on the free group is induced by the canonical action of homeomorphisms permuting $n$ points on the fundamental group of the disk with these $n$ points removed, which is free on $n$ generators.

\subsubsection{The Drinfeld-Kohno Lie ring}\label{reminders_DK}

A braid acts on $F_n^{ab} \cong \mathbb Z^n$ \emph{via} the associated permutation of the basis. As a consequence, the pure braid group $P_n$ acts trivially on $F_n^{ab}$, hence it is a subgroup of $IA_n$. Since $IA_n$  is residually nilpotent, so is $P_n$. The Lie ring associated to the lower central series of $P_n$ was first determined rationnally in \cite{Kohno}, and it was shown not to have torsion in \cite{Falk-Randell}, where its ranks where computed. Details about the complete description over~$\Z$ that we now recall may be found in the appendix of \cite{Darne2}.

\begin{defi}
The \emph{Drinfeld-Kohno Lie ring} is $DK_n := \Lie(P_n)$.
\end{defi}

Since $P_n$ acts trivially on $F_n^{ab}$, the semi-direct product $P_{n+1} \cong F_n \rtimes P_n$ is an almost-direct one, thus it induces a decomposition of the associated Lie rings:
\[\Lie(P_{n+1}) \cong \mathfrak L_n \rtimes \Lie(P_n),\]
where $\mathfrak L_n = \Lie(F_n)$ is the free Lie ring on $n$ generators $X_i := t_{i, n+1} = \overline A_{i, n+1}$ (for $i \geq n$). Thus,  $\Lie(P_n)$ decomposes as an iterated semi-direct product of free Lie rings. From that, it is not difficult to get a presentation of this Lie ring:

\begin{prop} \textup{\cite[Prop~A.3]{Darne2}.}\label{Drinfeld-Kohno}
The Drinfeld-Kohno Lie ring $DK_n = \Lie(P_n)$ is generated by the $t_{ij}  = \overline A_{ij}$ ($1 \leq i , j \leq n$), under the relations:
\[\begin{cases}
t_{ij} = t_{ji},\ t_{ii} = 0 &\forall i,j,\\ 
[t_{ij}, t_{ik} + t_{kj}] = 0 &\forall i, j, k,\\
[t_{ij}, t_{kl}] = 0 &\text{if } \{i,j\} \cap \{k,l\} = \varnothing.
\end{cases}\]
\end{prop}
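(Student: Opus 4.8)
The plan is to derive the presentation directly from the structural decomposition $\Lie(P_n) \cong \mathfrak L_{n-1} \rtimes \Lie(P_{n-1})$ recalled above, by induction on $n$. Write $L_n$ for the graded Lie ring defined by the generators and relations in the statement. One first checks that all the displayed relations do hold among the classes $t_{ij} = \overline{A_{ij}}$ in $DK_n$: the symmetry and vanishing relations are the conventions $A_{ji} = A_{ij}$, $A_{ii} = 1$, while the commutation relation for disjoint index pairs and the relation $[t_{ij}, t_{ik} + t_{kj}] = 0$ are the images in the associated graded of the classical pure braid relations among the $A_{ij}$. This produces a surjective homomorphism of graded Lie rings $\phi \colon L_n \twoheadrightarrow DK_n$ sending $t_{ij} \mapsto \overline{A_{ij}}$, and the whole point is to prove that $\phi$ is injective. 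The base cases $n=1,2$ are immediate ($DK_2 \cong \mathbb Z$, and the presentation gives $L_2 \cong \mathbb Z$ as the $4T$ and commutation relations are then trivial or vacuous).

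For the inductive step I would set up a split short exact sequence on each side matching the semi-direct product decomposition. Forgetting the last index defines a projection $\pi \colon L_n \to L_{n-1}$, $t_{i,n} \mapsto 0$ and $t_{ij} \mapsto t_{ij}$ for $i,j < n$; this is well defined because the relations of $L_n$ map to relations of $L_{n-1}$, and it is split by the evident inclusion $L_{n-1} \hookrightarrow L_n$. Through $\phi$ this is compatible with the projection $DK_n \to DK_{n-1}$ coming from $P_n \twoheadrightarrow P_{n-1}$ and its splitting, whose kernel is the free factor $\mathfrak L_{n-1}$ on the generators $t_{1,n}, \dots, t_{n-1,n}$. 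Writing $I_n = \ker(\pi)$, the ideal generated by the last-column generators, we get $L_n \cong I_n \oplus L_{n-1}$ as graded abelian groups, with $\phi$ respecting this decomposition ($\phi(I_n) \subseteq \mathfrak L_{n-1}$ and $\phi|_{L_{n-1}} = \phi_{n-1}$, an isomorphism by induction).

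The heart of the argument is to bound $I_n$ from above and combine this with the known ranks. Using the $4T$ relations with one index equal to $n$ — which read $[t_{ik}, t_{in}] = [t_{in}, t_{kn}]$ and $[t_{ij}, t_{in} + t_{jn}] = 0$ — together with the commutation relations $[t_{kl}, t_{m,n}] = 0$ for $\{k,l\} \cap \{m,n\} = \varnothing$, every bracket of a generator $t_{ij}$ of $L_{n-1}$ with a last-column generator $t_{k,n}$ can be rewritten inside the Lie subring generated by $t_{1,n}, \dots, t_{n-1,n}$. Hence $I_n$ is generated as a Lie subring, not merely as an ideal, by these $n-1$ elements, so each graded piece $(I_n)_d$ is spanned by the images of the degree-$d$ Lie monomials in the $t_{i,n}$; in particular $\rk(I_n)_d$ is at most the rank of the degree-$d$ part of the free Lie ring $\mathfrak L_{n-1}$. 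Feeding this into the split decomposition gives $\rk(L_n)_d \le \rk(\mathfrak L_{n-1})_d + \rk(L_{n-1})_d$, which by the induction hypothesis and the torsion-free Falk--Randell rank computation for $DK_n \cong \mathfrak L_{n-1} \rtimes DK_{n-1}$ equals $\rk(DK_n)_d$. Since $\phi_d$ is then a surjection onto the free abelian group $(DK_n)_d$ from a group generated by at most that many elements, each $\phi_d$ is an isomorphism, and therefore so is $\phi$.

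The step I expect to be the main obstacle is the reduction establishing that $I_n$ is generated as a subring by the last-column generators: one must verify that the infinitesimal braid relations genuinely absorb every mixed bracket, and that no relation of the presentation involves the $t_{i,n}$ alone in a way that would secretly enlarge or shrink $I_n$ (a quick inspection confirms that two indices equal to $n$ force a degeneration and that two index pairs both containing $n$ are never disjoint, so nothing is imposed among the last-column generators beyond the free Lie ring relations). It is exactly here that the explicit form of the $4T$ relations is indispensable; once the reduction is in place, the rank count does the bookkeeping that upgrades the degree-wise surjection into an isomorphism.
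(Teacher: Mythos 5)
Your proof is correct and follows essentially the same route as the paper: the paper defers the details to \cite[Prop.~A.3]{Darne2}, but explicitly derives the presentation from the iterated almost-direct-product decomposition $DK_n \cong \mathfrak L_{n-1} \rtimes DK_{n-1}$, which is exactly the inductive skeleton you use. Your reduction of the kernel $I_n$ to the Lie subring generated by the last-column generators via the $4T$ relations, followed by the rank count against the Falk--Randell ranks, is the standard (and correct) way to make that derivation precise.
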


Recall that the decomposition $DK_{n+1} \cong \mathfrak L_n \rtimes DK_n$ is encoded in the corresponding Johnson morphism $\tau : DK_n \rightarrow \Der(\mathfrak L_n)$. The following result allows us to identify $DK_n$ with a Lie subring of the Lie ring of derivations of the free Lie ring:

\begin{theo}\textup{\cite[Th.~6.2]{Darne2}.}\label{Andreadakis_for_Pn}
The subgroup $P_n$ of $IA_n$ satistfies the Andreadakis equality. This means exactly that the corresponding Johnson morphism $\tau : DK_n \rightarrow \Der(\mathfrak L_n)$ is injective.
\end{theo}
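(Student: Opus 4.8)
The plan is to recast the statement via Proposition~\ref{Johnson_inj}: the subgroup $P_n$ of $IA_n$ satisfies the Andreadakis equality exactly when $\Gamma_*(P_n) = \mathcal A_*(P_n, \Gamma_* F_n)$, which by that proposition is equivalent to the injectivity of the Johnson morphism $\tau_n : DK_n \to \Der(\mathfrak L_n)$ of Artin's action. I would prove injectivity of $\tau_n$ by induction on $n$, starting from the trivial case $n = 1$ (where $DK_1 = 0$). The whole argument rests on the almost-direct decomposition $DK_n \cong \mathfrak L_{n-1}\rtimes DK_{n-1}$ obtained by applying the exact Lie functor to $P_n \cong F_{n-1}\rtimes P_{n-1}$ (forgetting the last strand): the ideal $\mathfrak L_{n-1}$ is the free Lie ring on $t_{1n},\dots,t_{n-1,n}$, and the complement $DK_{n-1}$ is the sub-Lie-ring on the $t_{ij}$ with $i<j<n$. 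Throughout I use that for $\xi \in DK_n$ and $X \in \mathfrak L_n$ one has $\tau_n(\xi)(X) = [\xi, X]$ computed in $DK_{n+1} \cong \mathfrak L_n \rtimes DK_n$, with $\mathfrak L_n = \langle X_1,\dots,X_n\rangle$ and $X_i = t_{i,n+1}$; in particular $\tau_n(\xi)(X_n) = [\xi, t_{n,n+1}]$.

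I would then treat the two summands of $DK_n \cong \mathfrak L_{n-1}\rtimes DK_{n-1}$ separately, reading all the brackets off the presentation of Proposition~\ref{Drinfeld-Kohno}. For $\eta$ in the complement $DK_{n-1}$, every generator $t_{ij}$ ($i<j<n$) is disjoint from $\{n, n+1\}$ and hence commutes with $t_{n,n+1}$, so $\tau_n(\eta)(X_n) = 0$; being a derivation killing $X_n$, $\tau_n(\eta)$ preserves the ideal of $\mathfrak L_n$ generated by $X_n$ and descends to a derivation of $\mathfrak L_n/(X_n) \cong \mathfrak L_{n-1}$, which, by the symmetry of the presentation swapping strands $n$ and $n+1$, is exactly $\tau_{n-1}(\eta)$. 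Thus $\tau_n(\eta) = 0$ forces $\tau_{n-1}(\eta) = 0$, whence $\eta = 0$ by the induction hypothesis. For $v$ in the ideal $\mathfrak L_{n-1}$, the relations give $\tau_n(t_{in})(X_n) = \pm[X_i, X_n]$, and propagating this through the bracket with the Jacobi identity shows that $v \mapsto \tau_n(v)(X_n)$ coincides, up to sign, with $v \mapsto [\phi(v), X_n]$, where $\phi : \mathfrak L_{n-1}\xrightarrow{\sim}\langle X_1,\dots,X_{n-1}\rangle$ is the isomorphism $t_{in}\mapsto X_i$.

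To conclude I would use the freeness of $\mathfrak L_n$. Embedding $\mathfrak L_n$ into its universal enveloping ring, the free associative ring on $X_1,\dots,X_n$, any nonzero homogeneous $w$ of positive degree in $X_1,\dots,X_{n-1}$ satisfies $[w, X_n] = wX_n - X_n w \neq 0$, since the monomials of $wX_n$ (which end in their only $X_n$) are disjoint from those of $X_n w$ (which begin with their only $X_n$). Hence $\ad(-)(X_n)$ is injective on $\langle X_1,\dots,X_{n-1}\rangle$, and together with the previous paragraph $v \mapsto \tau_n(v)(X_n)$ is injective on the ideal $\mathfrak L_{n-1}$. Now let $\xi = v + \eta \in \ker\tau_n$ with $v \in \mathfrak L_{n-1}$ and $\eta \in DK_{n-1}$: evaluating at $X_n$ gives $0 = \tau_n(\xi)(X_n) = \pm[\phi(v), X_n]$, so $v = 0$; then $\eta = \xi \in \ker\tau_n$, hence $\eta = 0$ by the complement case, and therefore $\xi = 0$. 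This closes the induction.

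The main obstacle is the pair of identifications in the second paragraph: pinning down $\tau_n$ on the generators precisely enough to recognize the descended derivation as $\tau_{n-1}(\eta)$ and to obtain $\tau_n(v)(X_n) = \pm[\phi(v), X_n]$. Both come from the commutation relations of Proposition~\ref{Drinfeld-Kohno} (equivalently, from Artin's explicit formulas), but the bookkeeping of indices and signs, and the verification that the strand-swap really is an isomorphism of the relevant sub-Lie-rings, is where the genuine care lies; once the free Lie ring is placed inside its enveloping ring, everything else is formal.
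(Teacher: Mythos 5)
First, a remark on scope: the paper does not actually prove Theorem~\ref{Andreadakis_for_Pn} --- it quotes it from \cite{Darne2} --- so I am judging your argument on its own merits. Your overall architecture (induction on $n$ through the almost-direct decomposition $DK_n\cong\mathfrak L_{n-1}\rtimes DK_{n-1}$, killing the $DK_{n-1}$-component by descending to $\Der(\mathfrak L_{n-1})$, and the $\mathfrak L_{n-1}$-component by evaluating at $X_n$) is sound, and your first and third paragraphs are essentially correct. The gap is the key identity of your second paragraph: $\tau_n(v)(X_n)=\pm[\phi(v),X_n]$ for $v$ in the ideal $\langle t_{in}\rangle_{i<n}$ is \emph{false} in degree $\geq 3$. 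It does hold in degrees $1$ and $2$, but take $n=3$ and $v=[t_{13},[t_{13},t_{23}]]$: expanding $[v,t_{34}]$ in $DK_4$ with the relations of Proposition~\ref{Drinfeld-Kohno} (equivalently, composing the tangential derivations $\tau(t_{13})$ and $\tau(t_{23})$) gives $\tau_3(v)(X_3)=[X_3,[X_1,[X_1,X_2]]]+[X_3,\,[[X_1,X_3],X_2]-[X_1,[X_2,X_3]]]$, whose second summand is a nonzero element of multidegree $(1,1,2)$ (by Lemma~\ref{centralizers_in_Ln}, since $[[X_1,X_3],X_2]-[X_1,[X_2,X_3]]=2[[X_1,X_3],X_2]-[[X_1,X_2],X_3]\neq 0$ is not proportional to $X_3$), while $\pm[[X_1,[X_1,X_2]],X_3]$ lives in multidegree $(2,1,1)$. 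The conceptual reason the propagation fails is that the recursion $[[t_{in},v'],X_n]=[t_{in},[v',X_n]]-[v',[t_{in},X_n]]$ forces you to apply $\ad(t_{in})=\tau(t_{in})$ to elements of $\mathfrak L_n$ other than $X_n$, and $\tau(t_{in})$ is the tangential derivation $X_i\mapsto[X_i,X_n]$, $X_n\mapsto[X_n,X_i]$, $X_j\mapsto 0$ for $j\neq i,n$: it agrees with $-\ad(X_i)$ on $X_n$ but nowhere else, and the discrepancy first survives in degree $3$.

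The good news is that your evaluation-at-$X_n$ strategy can be repaired without any formula for $[v,X_n]$ inside $\mathfrak L_n$. Since the presentation of $DK_{n+1}$ is symmetric in all indices, one may forget strand $n$ instead of strand $n+1$: the ideal $M=\langle t_{jn}\rangle_{j\neq n}$ of $DK_{n+1}$ is a free Lie ring on the $n$ generators $t_{1n},\dots,t_{n-1,n},t_{n+1,n}$. Both $v$ (a Lie word in $t_{1n},\dots,t_{n-1,n}$) and $X_n=t_{n,n+1}=t_{n+1,n}$ lie in $M$, so $[v,X_n]$ is computed freely in $M$ as the bracket of a nonzero element of the free Lie subring on $t_{1n},\dots,t_{n-1,n}$ with the remaining free generator. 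Your enveloping-ring argument (or Lemma~\ref{centralizers_in_Ln}) then applies verbatim inside $M$ and yields $[v,X_n]\neq 0$ whenever $v\neq 0$, which is all your final paragraph needs.
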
  

\begin{rmq}\label{rmq_on_Milnor_inv}
This statement can be seen as a statement about Milnor $\mu$-invariants. Namely, up to a slight change of viewpoint (corresponding to taking Magnus expansions), $\tau\left(\overline\beta\right)$ corresponds exactly to the set of non-vanishing Milnor invariants of the (pure) braid $\beta$ of minimal degree. In particular, $\tau\left(\overline\beta\right)$ is of degree at least $d$ if and only if the Milnor invariants of degree at most $d-1$ of $\beta$ vanish. With this point of view, the injectivity of $\tau$ means exactly that Milnor invariants of degree at most $d-1$ distinguish braids up to elements of $\Gamma_d(P_n)$ (see also \cite{Mostovoy, Habegger}). Moreover, since $P_n$ (which is an almost-direct product of free groups) is residually nilpotent, this also means that Milnor invariants distinguish braids.
\end{rmq}

In the sequel, we identify $DK_n$ with $\tau(DK_n)$. It is not difficult to see that $DK_n$ is in fact a Lie subring of the Lie ring $\Der_t^{\partial}(\mathfrak L_n)$ of \emph{tangential derivations} (derivations sending each $X_i$ to $[X_i,w_i]$ for some $w_i \in \mathfrak L_n$) vanishing on the \emph{boundary element}, defined by:
\[\overline \partial_n = X_1 + \cdots + X_n = t_{1, n+1} + \cdots + t_{n, n+1} \in \mathfrak L_n \subset DK_{n+1}.\]

\begin{defi}\label{defi_braid_der}
A derivation of the free Lie ring $\mathfrak L_n$ is called \emph{braid-like} (or \emph{special}) if it is an element of $\Der_t^{\partial}(\mathfrak L_n)$. It is called a \emph{braid derivation} if it stands inside $DK_n$.
\end{defi}  

\begin{rmq}[On terminology]
Ihara \cite{Ihara} called \emph{special} the derivation we call \emph{tangential}. The ones we call \emph{braid-like}, he called \emph{normalized special}. More recently, authors working on the Kashiwara-Vergne problem (see for instance \cite{Alekseev}), who were using the word ``tangential" for the former, kept the word ``special" only for Ihara's \emph{normalized special} derivations (dropping the adjective ``normalized"). In the present paper, we use the word ``tangential", but we prefer ``braid-like" over ``special", because we feel that it conveys more meaning.
\end{rmq}

Not all braid-like derivations are braid derivations. The difference between the two Lie subrings of $\Der(\mathfrak L_n)$ is investigated in the appendix (\cref{section_braid_derivations}).

Remark that the projection of $DK_n$ onto $DK_{n-1}$ giving the above decomposition $DK_n \cong \mathfrak L_{n-1} \rtimes DK_{n-1}$ can be seen as the restriction of the projection of $\Der_t^{\partial}(\mathfrak L_n)$ onto $\Der_t^{\partial}(\mathfrak L_{n-1})$ induced by $X_n \mapsto 0$.

\section{Inner automorphisms}\label{section_inner_aut}

This section is devoted to the study of the Andreadakis problem for inner automorphisms of a group $G$, with respect to any filtration $G_*$ on the group. We prove a general criterion (Th.~\ref{Andreadakis_for_Inn}), involving a comparison between the center of $G$ and the center of the Lie ring associated to $G_*$. Since most of our applications will be to the case when $G_*$ is the lower central series of $G$, we spell out the application of our general criterion to this case in Cor.~\ref{Andreadakis_for_Inn_LCS}. We then turn to examples, including the easy case of the free group. Our most prominent application will be to the pure braid group, which will be the goal of the next section.

\subsection{A general criterion}

Recall that for any element $g$ of a group $G$, the inner automorphism $c_g$ associated to $g$ is defined by $c_g(x) = {}^g\!x$ ($= gxg^{-1}$) for all $x \in G$. The map $c : g \mapsto c_g$ is a group morphism whose image is the normal subgroup $\Inn(G)$ of $\Aut(G)$. The kernel of this morphism is the set of elements $g \in G$ such that for all $x \in G$, $gxg^{-1} = x$, which is the center $\mathcal Z(G)$ of $G$. As a consequence, $\Inn(G) \cong G/\mathcal Z(G)$.

The same story can be told for Lie algebras: a Lie algebra $\mathfrak g$ acts on itself \emph{via} the adjoint action, the image of this action $\ad : \mathfrak g \rightarrow \Der(\mathfrak g)$ is the Lie algebra $\ad(\mathfrak g)$ of inner derivations, and the kernel of $\ad$ is the center $\mathfrak z(\mathfrak g)$ of $\mathfrak g$, so that $\ad(\mathfrak g) \cong \mathfrak g/\mathfrak z(\mathfrak g)$.

We now explain how these two stories are related to the Andreadakis problem. Let $G_*$ be a filtration on $G = G_1$, and let $\mathcal A_*(G_*)$ by the associated Andreadakis filtration. The filtration $G_*$ acts on itself \emph{via} the adjoint action, which is induced by the action of $G$ on itself by inner automorphisms. The latter is represented by the morphism $c : G \rightarrow \Aut(G)$ described above, and the fact that it induces an action of $G_*$ on itself is reflected in the fact that $c$ sends $G_*$ to $\mathcal A_*(G_*)$. Thus, if we consider the image of $G_*$ under the corestriction $\pi : G \twoheadrightarrow \Inn(G)$ of $c$, we get an inclusion of filtration $\pi(G_*) \subseteq \mathcal A_*(G_*)$. The following theorem gives a criterion for the inclusion $\pi(G_*) \subseteq \Inn(G) \cap \mathcal A_*(G_*)$ to be an equality:

\begin{theo}\label{Andreadakis_for_Inn}
Let $G_*$ be a filtration on $G = G_1$. Its image in $\Inn(G)$ coincides with $\Inn(G) \cap \mathcal A_*(G_*)$ if and only if the inclusion $\mathfrak z(\Lie(G_*)) \supseteq \Lie\left(G_* \cap \mathcal Z(G)\right)$ is an equality, that is, exactly when every central element of $\Lie(G_*)$ is the class of some central element of $G$.
\end{theo}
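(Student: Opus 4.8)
The plan is to recognize the statement as an instance of the injectivity criterion for Johnson morphisms (Prop.~\ref{Johnson_inj}), applied to the action of $\Inn(G)$ on $G_*$. Writing $c : G \rightarrow \Aut(G)$ and $\pi : G \twoheadrightarrow \Inn(G)$ for the conjugation morphism and its corestriction, I would first record two preliminary observations. One checks that $\Inn(G) \subseteq \mathcal A_1(G_*)$, since $[g, G_j] \subseteq [G, G_j] \subseteq G_{j+1}$ for every $g \in G$ and every $j \geq 1$; hence $\pi(G_*)$ and $\Inn(G) \cap \mathcal A_*(G_*)$ are two filtrations on the same group $\Inn(G)$, the latter being $\mathcal A_*(\Inn(G), G_*)$ by the remark $\mathcal A_*(K, H_*) = a^{-1}(\mathcal A_*(H_*))$ restricted to $\Inn(G)$. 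Moreover $\pi(G_*)$ acts on $G_*$, because $[\pi(G_i), G_j]$ is computed from $[G_i, G_j] \subseteq G_{i+j}$. The content of the theorem is then exactly that $\pi(G_*) = \mathcal A_*(\Inn(G), G_*)$.

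By Prop.~\ref{Johnson_inj}, this equality holds if and only if the Johnson morphism $\tau : \Lie(\pi(G_*)) \rightarrow \Der(\Lie(G_*))$ attached to the action of $\pi(G_*)$ on $G_*$ is injective, so the next step is to compute its source and the morphism itself. The short exact sequence of groups $\mathcal Z(G) \hookrightarrow G \xrightarrow{\pi} \Inn(G)$ upgrades to a short exact sequence of filtrations $G_* \cap \mathcal Z(G) \hookrightarrow G_* \twoheadrightarrow \pi(G_*)$, because $\pi(G_i) \cong G_i/(G_i \cap \mathcal Z(G))$ at every level. Applying the exact Lie functor yields a short exact sequence of graded Lie rings, hence an identification $\Lie(\pi(G_*)) \cong \Lie(G_*)/N$ with $N := \Lie(G_* \cap \mathcal Z(G))$ realized as a Lie subring of $\Lie(G_*)$; this subring lands inside $\mathfrak z(\Lie(G_*))$, since the class of a central element of $G$ is killed by every bracket.

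Finally I would identify $\tau$. Unwinding the definition of the Johnson morphism, $\tau(\overline{c_g}) = \bigl(\overline h \mapsto \overline{[g,h]}\bigr) = \ad(\overline g)$, so that $\tau \circ p = \ad$, where $p : \Lie(G_*) \twoheadrightarrow \Lie(G_*)/N = \Lie(\pi(G_*))$ is the projection induced by $\pi$ and $\ad : \Lie(G_*) \rightarrow \Der(\Lie(G_*))$ is the adjoint representation. Since $p$ is surjective with kernel $N$, the morphism $\tau$ is injective if and only if $\ker(\ad) = \mathfrak z(\Lie(G_*)) \subseteq N$; combined with the reverse inclusion $N \subseteq \mathfrak z(\Lie(G_*))$ noted above, this is precisely the asserted equality $\mathfrak z(\Lie(G_*)) = \Lie(G_* \cap \mathcal Z(G))$, which says exactly that every central element of $\Lie(G_*)$ is the class of a central element of $G$. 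The main point to get right is this translation between the group-level comparison and the Lie-ring center condition: one must set up the two short exact sequences of filtrations carefully and verify that the Johnson morphism is literally the reduction of $\ad$ modulo the central classes coming from $\mathcal Z(G)$. Once that dictionary is in place, the equivalence follows from the first isomorphism theorem, and no delicate commutator computation remains.
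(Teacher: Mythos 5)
Your proof is correct and follows essentially the same route as the paper's: both rest on the exactness of the Lie functor applied to $G_* \cap \mathcal Z(G) \hookrightarrow G_* \twoheadrightarrow \pi(G_*)$ and on the identification of the Johnson morphism of the conjugation action with $\ad$. The only difference is organizational: you invoke Prop.~\ref{Johnson_inj} directly for the action of $\pi(G_*)$ on $G_*$, whereas the paper computes the kernel of $i_\# : \Lie(\pi(G_*)) \to \Lie\left(\Inn(G) \cap \mathcal A_*(G_*)\right)$ by a short diagram chase; the two computations coincide, since your Johnson morphism differs from $i_\#$ only by composition with injective maps.
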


If $G_*$ is the lower central series of $G$, whose image in $\Inn(G)$ is the lower central series of $\Inn(G)$, then $\mathcal A_*(\Gamma_*G)$ is the usual Andreadakis filtration on $IA(G)$. 

\begin{cor}\label{Andreadakis_for_Inn_LCS}
For a group $G$, the following conditions are equivalent:
\begin{itemize}[itemsep=-3pt,topsep=3pt]
\item The Andreadakis equality holds for $\Inn(G)$.
\item Every central element of $\Lie(G)$ is the class of some central element of $G$.
\item The canonical projection $ \Lie \left(G/\mathcal Z(G)\right) \twoheadrightarrow \Lie(G)/\mathfrak z(\Lie(G)) = \ad(\Lie(G))$ is an isomorphism.
\end{itemize}
\end{cor}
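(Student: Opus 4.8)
The plan is to derive the corollary from Theorem~\ref{Andreadakis_for_Inn} by specializing it to the lower central series $G_* = \Gamma_*(G)$ and then repackaging the resulting statement into the three equivalent forms. Two preliminary identifications make this specialization transparent: since $\pi \colon G \twoheadrightarrow \Inn(G) \cong G/\mathcal Z(G)$ is surjective, it carries the lower central series onto the lower central series, so $\pi(\Gamma_i(G)) = \Gamma_i(\Inn(G))$ and the image of $\Gamma_*(G)$ in $\Inn(G)$ is exactly $\Gamma_*(\Inn(G))$; moreover $\mathcal A_*(\Gamma_*G)$ is the usual Andreadakis filtration $\mathcal A_*(G)$ on $IA_G$. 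With these in hand, Theorem~\ref{Andreadakis_for_Inn} asserts that $\Gamma_*(\Inn(G)) = \Inn(G) \cap \mathcal A_*(G)$ if and only if every central element of $\Lie(G)$ is the class of a central element of $G$, i.e.\ the second bullet.

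First I would establish the equivalence of the first two conditions. By definition the Andreadakis equality for $\Inn(G)$ says that both inclusions
\[\Gamma_*(\Inn(G)) \ \subseteq\ \Inn(G) \cap \Gamma_*(IA_G) \ \subseteq\ \Inn(G) \cap \mathcal A_*(G)\]
are equalities. The middle filtration is squeezed between the outer two, so equality of the outer terms already forces equality throughout, while the Andreadakis equality trivially gives the outer equality. Hence the Andreadakis equality for $\Inn(G)$ is equivalent to the single equality $\Gamma_*(\Inn(G)) = \Inn(G) \cap \mathcal A_*(G)$, which by the specialized theorem is equivalent to the second bullet.

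It remains to match the second and third conditions, which is purely Lie-theoretic. The surjection $\Lie(\pi)\colon \Lie(G)\twoheadrightarrow \Lie(G/\mathcal Z(G))$ has kernel the graded subgroup $\Lie(\Gamma_*(G)\cap\mathcal Z(G))$ of classes of central elements: in degree $i$ the kernel is $(\Gamma_{i+1}G\cdot\mathcal Z(G))\cap\Gamma_iG$ modulo $\Gamma_{i+1}G$, and the elementary (Dedekind) identity $(\Gamma_{i+1}G\cdot\mathcal Z(G))\cap\Gamma_iG=\Gamma_{i+1}G\cdot(\mathcal Z(G)\cap\Gamma_iG)$ yields the claimed description. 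On the other hand the quotient map $\Lie(G)\twoheadrightarrow \Lie(G)/\mathfrak z(\Lie(G))=\ad(\Lie(G))$ has kernel $\mathfrak z(\Lie(G))$, and any central element of $G$ has central class, since $z\in\mathcal Z(G)$ gives $[\overline z,\overline g]=\overline{[z,g]}=0$ for all $g$. Thus $\ker\Lie(\pi)\subseteq\mathfrak z(\Lie(G))$, so the quotient map factors through $\Lie(\pi)$ as the canonical surjection $\Lie(G/\mathcal Z(G))\twoheadrightarrow\ad(\Lie(G))$ of the third bullet. This surjection is an isomorphism exactly when the two kernels agree, that is when $\Lie(\Gamma_*(G)\cap\mathcal Z(G))=\mathfrak z(\Lie(G))$ --- again the second bullet.

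The squeezing argument and the verification that central group elements have central Lie classes are routine. The one step requiring genuine care is the computation of $\ker\Lie(\pi)$, where one must check degree by degree that the filtration induced on $\mathcal Z(G)$ from $\Gamma_*(G)$ accounts for the whole kernel; this, read off from the central extension $\mathcal Z(G)\hookrightarrow G\twoheadrightarrow\Inn(G)$, I expect to be the main (if modest) obstacle.
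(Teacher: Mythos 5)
Your argument is correct and follows the paper's own route: specialize Theorem~\ref{Andreadakis_for_Inn} to $G_*=\Gamma_*(G)$ for the equivalence of the first two bullets, then identify $\Lie\left(G/\mathcal Z(G)\right)$ with $\Lie(G)/\Lie\left(\Gamma_*(G)\cap\mathcal Z(G)\right)$ to compare with the third. The only difference is cosmetic: where you compute $\ker\Lie(\pi)$ degree by degree via the modular law, the paper simply applies the exactness of the Lie functor to the short exact sequence of filtrations $\Gamma_*(G)\cap\mathcal Z(G)\hookrightarrow\Gamma_*(G)\twoheadrightarrow\Gamma_*\left(G/\mathcal Z(G)\right)$, which is the same verification packaged as a cited general fact.
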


This introduces a motivation for solving the Andreadakis problem for inner automorphisms of a group : it allows one to compute the Lie algebra of the quotient $G/\mathcal Z (G)$. We will apply this to the pure braid group $P_n$ later (\cref{par_Inn(Pn)}).

\begin{proof}[Proof of Cor.~\ref{Andreadakis_for_Inn_LCS}]
As a direct application of Theorem~\ref{Andreadakis_for_Inn}, the first assertion is equivalent to the inclusion $\mathfrak z(\Lie(G)) \supseteq \Lie\left(\Gamma_*(G) \cap \mathcal Z(G)\right)$ being an equality, which is clearly equivalent to the second assertion. In order to see that it is also equivalent to the third condition, let us consider the short exact sequence of filtrations :
\[\begin{tikzcd} 
\Gamma_*(G) \cap \mathcal Z(G) \ar[r, hook]  
&\Gamma_*(G) \ar[r, two heads] 
&\Gamma_*\left(G/\mathcal Z(G) \right).
\end{tikzcd}\]
By applying the Lie functor, we get an isomorphism:
\[\Lie \left(G/\mathcal Z(G) \right) \cong \Lie(G)/ \Lie(\Gamma_*(G) \cap \mathcal Z(G)).\]
As a consequence, the inclusion $\mathfrak z(\Lie(G)) \supseteq \Lie\left(\Gamma_*(G) \cap \mathcal Z(G)\right)$ induces a canonical projection $ \Lie \left(G/\mathcal Z(G)\right) \twoheadrightarrow \Lie(G)/\mathfrak z(\Lie(G))$. The latter is an isomorphism if an only if the former is an equality.
\end{proof}

\begin{proof}[Proof of Theorem \ref{Andreadakis_for_Inn}]
Let us denote by $\pi(G_*)$ the image of $G_*$ under $\pi : G \twoheadrightarrow \Inn(G)$, and the inclusion $\pi(G_*) \subseteq \Inn(G) \cap \mathcal A_*(G_*)$ by $i$. The latter induces a morphism $i_* : \Lie(\pi(G_*)) \rightarrow  \Lie\left(\Inn(G) \cap \mathcal A_*(G_*)\right)$, whose injectivity is equivalent to the two filtrations on $\Inn(G)$ being equal. By definition of $\pi$ and $i$, we have a commutative square:
\[\begin{tikzcd}
\pi(G_*) \ar[d, "i"] &
G_* \ar[d, "c"] \ar[l, swap, two heads, "\pi"]&\\
\Inn(G) \cap \mathcal A_*(G_*) \ar[r, hook] &
\mathcal A_*(G_*).
\end{tikzcd}\]
By taking the associated graded, we get the left square in:
\[\begin{tikzcd}
\Lie(\pi(G_*)) \ar[d, "i_\#"] &
\Lie(G_*) \ar[d, "c_\#"] \ar[rd, "\ad"] \ar[l, swap, two heads, "\pi_\#"]&\\
\Lie\left(\Inn(G) \cap \mathcal A_*(G_*)\right) \ar[r, hook] &
\Lie(\mathcal A_*(G_*)) \ar[r, hook, swap, "\tau"] &
\Der\left(\Lie(G_*)\right).
\end{tikzcd}\]
The fact that the triangle on the right commutes can be seen \emph{via} an abstract argument ($c$ represents the adjoint action of $G_*$, hence $c_\#$ represents the adjoint action of $\Lie(G_*)$), or can be obtained \emph{via} a direct calculation using the usual explicit description of the Johnson morphism : 
\begin{equation}\label{tau(c_g)}
\tau(\overline{c_g}) : \overline x \mapsto \overline{c_g(x)x^{-1}} =  \overline{[g,x]} = [\overline g, \overline x] = \ad_{\overline g}(\overline x).
\end{equation}
From this fact and the injectivity of the Johnson morphism, we deduce that the kernel of $c_\#$ is $\mathfrak z (\Lie(G_*))$. The Lie subring $\Lie\left(G_* \cap \mathcal Z(G)\right)$ of $\Lie(G_*)$, on the other hand, appears as the kernel of $\pi_\#$ by applying the Lie functor to the following short exact sequence of filtrations:
\[\begin{tikzcd}
G_* \cap \mathcal Z(G) \ar[r, hook] & 
G_* \ar[r, two heads, "\pi"] &
\pi(G_*).
\end{tikzcd}\]
Now, $\pi_\#$ induces a surjection from $\ker(c_\#) = \ker(i_\# \pi_\#)$ onto $\ker(i_\#)$ whose kernel is exactly $\ker(\pi_\#)$ (this can be seen as an application of the usual exact sequence $0 \rightarrow \ker(v) \rightarrow \ker(uv) \rightarrow \ker(u) \rightarrow \coker(v)$ to $u = i_\#$ and $v = \pi_\#$). Thus we have:
\begin{equation}\label{ker(i)}
\ker(i_\#) \cong \frac{\mathfrak z (\Lie(G_*))}{\Lie\left(G_* \cap \mathcal Z(G)\right)},
\end{equation}
whence the conclusion.
\end{proof}

\begin{rmq}
The isomorphism \eqref{ker(i)} gives more information than the statement of Theorem \ref{Andreadakis_for_Inn}, which is the case when this kernel is trivial. However, we will mainly use the latter case in the sequel.
\end{rmq}

\subsection{Examples}

We now apply Corollary \ref{Andreadakis_for_Inn_LCS} in order to give examples of groups whose group of inner automorphism satisfies (or not) the Andreadakis equality.

\subsubsection{Counter-examples}\label{par_counter_ex}

\paragraph{The symmetric group :} The first counter-examples we can give are groups $G$ with no center and a non-trivial abelian Lie ring (that is, a Lie ring reduced to $G^{ab} \neq 0$), such as the symmetric group $\Sigma_n$ ($n \geq 3$), whose abelianization is $\Sigma_n/A_n \cong \Z/2$. 

\paragraph{The braid group :} Another slightly more interesting counter-example in the braid group $B_n$, whose classical generators $\sigma_1, ..., \sigma_{n-1}$ are all conjugate. Its Lie algebra is reduced to its abelianization $B_n^{ab} \cong \Z$, generated by the common class $\overline \sigma$ of all $\sigma_i$. If $n \geq 3$, the center of $B_n$ is cyclic, generated by $\xi_n = (\sigma_1 \cdots \sigma_{n-1})^n$. Thus its image in $B_n^{ab} \cong \Z$ is not equal to all of $B_n^{ab}$, but to $n(n-1) \Z \subset \Z$.

\bigskip

Such examples are not very interesting to us, since filtrations on them do not contain a lot of information (they contain only information about the abelianization). We now describe a way of obtaining nilpotent (or residually nilpotent) counter-examples.

\paragraph{Constructing counter-examples as semi-direct products :} Let $G$ be a group, and $\alpha$ be an automorphism of $G$. Consider the semi-direct product $G \rtimes \Z$ encoding the action of $\Z$ on $G$ through powers of $\alpha$. This semi-direct product is an almost-direct one if and only if  $\alpha \in IA(G)$. Under this condition, $\Gamma_*(G \rtimes \Z) = \Gamma_*(G) \rtimes \Gamma_*(\Z)$, so that $G \rtimes \Z$ is nilpotent (resp.\ residually nilpotent) whenever $G$ is. 

Let us denote by $t$ the generator of $\Z$ acting \emph{via} $\alpha$ on $G$, and consider its class $\overline t$ in $\Lie(G \rtimes \Z)$. This element is central in $\Lie(G \rtimes \Z) = \Lie(G) \rtimes \Z$ if and only if its action on  $\Lie(G)$ is trivial.  This happens exactly when $\alpha \in \mathcal A_2(G)$. Indeed, for any class $\overline g \in \Lie_i(G)$, we have $[\overline t, \overline g] = \overline{[t, g]} = \overline{[\alpha,g]}$, and this is trivial in $\Lie_{i+1}(G)$ if and only if $[\alpha,g] \in \Gamma_{i+2}G$. Thus, $[\overline t, \overline g] = 0$ for all $\overline g$ if and only if for all $i \geq 1$, $[\alpha, \Gamma_iG]\subseteq \Gamma_{i+2}G$.

When is $\overline t$ the class of a central element in $G \rtimes \Z$ ? Lifts of $\overline t$ to $G \rtimes \Z$ are elements of the form $gt$ with $g \in \Gamma_2 G$. Using Proposition \ref{dec_of_centers} below, we see that such an element is central in $G \rtimes \Z$ if and only if $t$ acts on $G$ \emph{via} $c_{g^{-1}}$. Thus, $\overline t$  is the class of a central element of $G \rtimes \Z$ if and only if $\alpha$ is an inner automorphism $c_x$ ($x \in \Gamma_2G$), in which case $(x^{-1}, t)$ is the only central element whose class in $(G \rtimes \Z)$ is $\overline t$.

We conclude that this construction gives counter-examples whenever there exist automorphisms $\alpha$ in $\mathcal A_2$ which are not inner. Such automorphisms exist for the free group, or for the free nilpotent group of any class at least $3$, giving counter-examples of any nilpotency class greater than $3$.

\subsubsection{The free group}\label{par_Fn}

Let $G = F_n$ be the free group on $n$ generators, with $n \geq 2$. It is centerless, so that $\Inn(F_n) \cong F_n$.
We can apply the above machinery to the lower central series $\Gamma_*(F_n)$. The associated graded ring $\Lie(F_n)$ is free, whence centerless. As a consequence, we deduce directly from Corollary~\ref{Andreadakis_for_Inn_LCS} the following :

\begin{cor}\label{Andreadakis_for_Inn(Fn)}
The subgroup $\Inn(F_n)$ of $IA_n$ satisfies the Andreadakis equality.
\end{cor}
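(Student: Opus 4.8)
The goal is to prove \cref{Andreadakis_for_Inn(Fn)}, which states that $\Inn(F_n)$ satisfies the Andreadakis equality. My plan is to apply \cref{Andreadakis_for_Inn_LCS} directly, so the entire task reduces to verifying one of the three equivalent conditions stated there for the group $G = F_n$. The cleanest of these to check is the second: every central element of $\Lie(F_n)$ must be the class of some central element of $F_n$.

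First I would record the structural facts about $F_n$ that make this trivial. Since $n \geq 2$, the free group $F_n$ has trivial center, so the set of central elements of $F_n$ is just $\{1\}$, whose class in $\Lie(F_n)$ is $0$. Thus the second condition of \cref{Andreadakis_for_Inn_LCS} holds precisely when the center $\mathfrak z(\Lie(F_n))$ of the associated Lie ring is trivial. Now $\Lie(F_n)$ is the free Lie ring $\mathfrak L_n$ on $n$ generators (this is the defining property recalled earlier in the paper, where $\mathfrak L_n = \Lie(F_n)$ is identified as the free Lie ring). The only remaining point is therefore the algebraic fact that a free Lie ring on at least two generators is centerless.

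The main (and only genuine) obstacle is this last assertion: that $\mathfrak z(\mathfrak L_n) = 0$ for $n \geq 2$. I expect this to be a standard, well-known fact about free Lie algebras over $\Z$, which one may simply invoke, so there is really no hard step in the proof. If a justification were wanted, one notes that a free Lie ring embeds into its universal enveloping algebra, which is the free associative ring (the tensor algebra), and that the latter has no nonzero central elements in positive degree when $n \geq 2$; alternatively one argues directly using a Hall or Lyndon basis that a nonzero homogeneous element bracketing to zero against both of two distinct generators cannot exist. In the write-up I would cite freeness of $\Lie(F_n)$ and the absence of center, then conclude via \cref{Andreadakis_for_Inn_LCS}.

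In summary, the proof is a one-line application: $F_n$ is centerless and $\Lie(F_n) = \mathfrak L_n$ is free, hence centerless, so the condition ``every central element of $\Lie(F_n)$ is the class of a central element of $F_n$'' is satisfied vacuously, and \cref{Andreadakis_for_Inn_LCS} gives the Andreadakis equality for $\Inn(F_n)$. The interest of the statement lies not in its difficulty but in its role as the base case feeding into the later product-type arguments for $P_{n+1}^*$.
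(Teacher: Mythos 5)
Your proposal is correct and is essentially the paper's own argument: the paper deduces the corollary directly from Corollary~\ref{Andreadakis_for_Inn_LCS} by observing that $F_n$ is centerless and that $\Lie(F_n) = \mathfrak L_n$ is free, hence centerless. (The paper also records two alternative proofs — a direct computation with commutators using Lemma~\ref{centralizers_in_Ln}, and an identification of the Johnson morphism with $\ad : \mathfrak L_n \rightarrow \Der(\mathfrak L_n)$ — but its primary proof coincides with yours.)
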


We do not need the full strength of the above theory of filtrations on inner automorphisms in order to get this result. In fact, it may be enlightening to write down a direct proof of this corollary.

\begin{proof}[Direct proof of Cor.~\ref{Andreadakis_for_Inn(Fn)}]
If $w \in F_n$, suppose that $c_w \in \mathcal A_j$, and let us show that $w \in \Gamma_j(F_n)$. The hypothesis means that:
\[\forall x \in F_n,\ \ c_w(x) \equiv x \pmod{\Gamma_{j+1}(F_n)}.\]
But $c_w(x)x^{-1} = [w,x]$, and $[w,x_i] \in \Gamma_{j+1}(F_n)$ implies that the class $\overline w$ of $w$ in $\Lie(F_n)$ either is of degree at least $j$ or commutes with the generator $\overline x_i$. The latter is possible only if $\overline w \in \mathbb Z \overline x_i$ (see the classical Lemma~\ref{centralizers_in_Ln} recalled below), which can be true only for one value of $i$. Thus $\overline w$ must be of degree at least $j$, which means that $w \in \Gamma_j(F_n)$.
\end{proof}

We can give yet another proof, which looks more like a simple version of the above theory in this particular case, and enhances the fact (needed later) that the Lie ring of $\Inn(F_n)$ identifies with the ring of inner derivations of $\mathfrak L_n$.

\begin{proof}[Yet another proof of Cor.~\ref{Andreadakis_for_Inn(Fn)}]
The Andreadakis equality is equivalent to the associated Johnson morphism $\tau: \Lie(F_n) \rightarrow \Der(\mathfrak L_n)$ being injective (Prop.~\ref{Johnson_inj}). Because of the direct calculation \eqref{tau(c_g)}, this morphism can be identified with $\ad: \mathfrak L_n \rightarrow \Der(\mathfrak L_n)$, whose kernel is the center of $\mathfrak L_n$, which is trivial. Thus the result.
\end{proof}

\section{Centers of semi-direct products}\label{section_centers}

We now turn to showing the Andreadakis equality for inner automorphisms of the pure braid group, a goal achived in Theorem \ref{Andreadakis_for_Inn(Pn)}. In order to do this, we recover the classical computation of the center of $P_n$ in a way that can be adapted easily to a computation of the center of its Lie ring $DK_n$.

\subsection{General theory}

The following easy result, which we have already used once in \ref{par_counter_ex}, will be our main tool in computing the center of $P_n$, and that of $DK_n$:

\begin{prop}\label{dec_of_centers}
Let $K$ be a group acting on another group $H$ by automorphisms. Then the center of $H \rtimes K$ consists of elements $hk$ (with $h \in H$ and $k \in K$) such that:
\begin{itemize}[itemsep=-3pt,topsep=3pt]
\item $k$ is central in $K$,
\item $h$ is a fixed point of the action of $K$ on $H$ (that is, $K \cdot h = \{h\}$),
\item $k$ acts on $H$ \emph{via} $c_h^{-1}$.
\end{itemize}
\smallskip
Similarly, let $\mathfrak k$ be a Lie algebra acting on another Lie algebra $\mathfrak h$ by derivations. Then the center of  $\mathfrak h \rtimes \mathfrak k$ consists of elements $x+y$ (with $x \in \mathfrak h$ and $y \in \mathfrak k$) satisfying:
\begin{itemize}[itemsep=-3pt,topsep=3pt]
\item $y$ is central in $\mathfrak k$,
\item $x$ is a fixed point of the action of $\mathfrak k$ on $\mathfrak h$ (that is, $\mathfrak k \cdot x = \{0\}$),
\item $y$ acts on $\mathfrak h$ \emph{via} $- \ad(x)$.
\end{itemize}
\end{prop}

\begin{proof}
Let $h \in H$ and $k \in K$. The condition $hk \in \mathcal Z (H \rtimes K)$ means exactly that for all $h' \in H$ and all $k' \in K$, we have $hkh'k' = h'k'hk$. For $h' = 1$, this gives $hkk' = k'hk = k'hk'^{-1}k'k$, from which we deduce $h = k'hk'^{-1}$ and $kk' = k'k$. This must be true for all $k' \in K$, whence the first two conditions. Suppose that these are satisfied. Then we write the condition for $hk$ to belong to $\mathcal Z (H \rtimes K)$ as $hkh'k^{-1}kk' = h'k'hk'^{-1}k'k$, which becomes $hkh'k^{-1} = h'h$, that is $k \cdot h' = h^{-1}h'h$ (which has to hold for all $h' \in H$). The latter is exactly the third condition in our statement.

The case of Lie algebras is quite similar. Let $x \in \mathfrak h$ and $y \in \mathfrak k$. The element $x+y$ belongs to $\mathfrak z (\mathfrak h \rtimes \mathfrak k)$ if and only if for all  $x' \in \mathfrak h$ and $y' \in \mathfrak k$, the bracket $[x+y,x'+y'] = [x,x'] +[y,y'] + y \cdot x' - y' \cdot x$ is trivial. For $x' = 0$, this gives $[y,y'] = 0$ and $y' \cdot x$, for all $y' \in \mathfrak k$, whence the first two conditions. Suppose that these are satisfied. Then the condition for $x+y$ to belong to $\mathfrak z (\mathfrak h \rtimes \mathfrak k)$ can be written as $[x,x'] + y \cdot x' = 0$, that is, $y \cdot x' = -[x,x']$ (which has to hold for all $x' \in \mathfrak h$). 
\end{proof}

\subsection{The center of the pure braid group}

Consider the pure braid group $P_n$ on $n$ generators. By applying Proposition \ref{dec_of_centers} to the semi-direct product decomposition $P_n \cong F_{n-1} \rtimes P_{n-1}$ recalled in \cref{reminders_braids}, we are able to recover the classical result of \cite{Chow}, which can also be found as \cite[Cor.~1.8.4]{Birman}:
\begin{prop}\label{Z(Pn)}
The center of $P_n$ (whence of $B_n$ if $n \geq 3$) is cyclic, generated by the element $\xi_n$ defined by $\xi_1 = 1$ and $\xi_{n+1} = \partial_n \cdot \xi_n$. As a braid automorphism, $\xi_n =  c_{\partial_n}^{-1}$.
\end{prop}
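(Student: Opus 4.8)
The plan is to argue by induction on $n$, using the semi-direct product decomposition $P_n \cong F_{n-1} \rtimes P_{n-1}$ recalled in \cref{reminders_braids} together with the center formula of Proposition~\ref{dec_of_centers}. I would carry both assertions of the statement simultaneously through the induction: that $\mathcal Z(P_n) = \langle \xi_n \rangle$, and that $\xi_n$ acts on $F_n$ as the inner automorphism $c_{\partial_n}^{-1}$. The base case is immediate ($P_1$ is trivial; at $n=2$ one has $\xi_2 = \partial_1 = A_{12}$ generating $P_2 \cong \Z$, acting on $F_2$ as $c_{\partial_2}^{-1}$ by direct inspection).

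For the center itself, I would feed the decomposition $P_n \cong F_{n-1} \rtimes P_{n-1}$ into Proposition~\ref{dec_of_centers}: a central element is a product $hk$ with $h \in F_{n-1}$ and $k \in P_{n-1}$, where $k$ is central in $P_{n-1}$, the element $h$ is fixed by the action of $P_{n-1}$, and $k$ acts on $F_{n-1}$ as $c_h^{-1}$. By the inductive hypothesis $k = \xi_{n-1}^m$ and $\xi_{n-1}$ acts as $c_{\partial_{n-1}}^{-1}$, so $\xi_{n-1}^m$ acts as $c_{\partial_{n-1}^m}^{-1}$; since $F_{n-1}$ is centerless, the third condition forces $h = \partial_{n-1}^m$. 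The fixed-point condition is then automatic, because $\partial_{n-1}$ is the boundary element and is fixed by every braid automorphism in $P_{n-1} \cong \Aut^{\partial}_C(F_{n-1})$. As $\partial_{n-1}$ is itself $\xi_{n-1}$-fixed, one has $\partial_{n-1}^m \xi_{n-1}^m = (\partial_{n-1}\xi_{n-1})^m$, whence $\mathcal Z(P_n) = \langle \partial_{n-1}\xi_{n-1}\rangle = \langle \xi_n\rangle$. I would stress that this step never requires an independent description of the full fixed subgroup of the action: the third condition pins $h$ down, and only the trivial verification that $\partial_{n-1}^m$ is fixed remains.

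For the description of $\xi_n$ as a braid automorphism, I would first record a purely structural fact, independent of the induction: $c_{\partial_n}^{-1}$ is itself a braid automorphism (being inner it preserves every conjugacy class, and it fixes $\partial_n$), and it is \emph{central} in $P_n \cong \Aut^{\partial}_C(F_n)$, since every $\beta$ in this group fixes $\partial_n$ and therefore commutes with $c_{\partial_n}$. Combined with the first assertion this already gives $c_{\partial_n}^{-1} = \xi_n^t$ for some $t \in \Z$, and it remains to verify $t = 1$. The non-circular way to do this is to compute the Artin action of $\xi_n = \partial_{n-1}\xi_{n-1}$ directly on the generators $y_j = A_{j,n+1}$ of $F_n$: on $y_n$ the factor $\xi_{n-1} \in P_{n-1}$ commutes with $A_{n,n+1}$ (disjoint strands), so $\xi_n \cdot y_n$ reduces to the action of $\partial_{n-1}$, to be compared with $\partial_n^{-1} y_n \partial_n$; the remaining generators are handled by feeding the inductive formula for $\xi_{n-1}$ into $\xi_n \cdot y_j = \partial_{n-1}\cdot(\xi_{n-1}\cdot y_j)$. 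As a cross-check, once $\mathcal Z(P_{n+1}) = \langle \partial_n \xi_n\rangle$ is known, centrality of $\partial_n \xi_n$ in $P_{n+1} \cong F_n \rtimes P_n$ forces it to commute with each $y_j$, and expanding $(\partial_n\xi_n)\,y_j\,(\partial_n\xi_n)^{-1} = y_j$ rearranges exactly to $\xi_n \cdot y_j = \partial_n^{-1} y_j \partial_n$.

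The main obstacle is precisely this identification of the action: matching the recursively defined generator $\partial_{n-1}\xi_{n-1}$ with the inner automorphism $c_{\partial_n}^{-1}$ on the nose, equivalently fixing the exponent $t$ at $1$. The structural inputs — that $c_{\partial_n}^{-1}$ is central and that the fixed-point condition is automatic — are easy; the delicate point is that the center computation and the action statement at level $n$ are interlocked, so the induction must be organised to carry the two assertions together, with the exponent settled by the explicit Artin formulas (or the one-level-up centrality bootstrap used only as a confirmation).
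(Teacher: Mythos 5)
Your proposal is correct and rests on the same two pillars as the paper's proof: the decomposition $P_n \cong F_{n-1} \rtimes P_{n-1}$ fed into Proposition~\ref{dec_of_centers}, and the observation that $c_{\partial_n}$ is a central braid automorphism. The differences are organisational but worth noting. First, the paper does not run an induction on the center at all: it constrains the $F_{n-1}$-component $w$ of a central element directly via Lemma~\ref{Pn_cap_Fn} (the third condition of Prop.~\ref{dec_of_centers} puts $c_w^{-1}$ in $P_{n-1}\cap\Inn(F_{n-1})$, which is $\langle c_{\partial_{n-1}}\rangle$ by primitivity of $\partial_{n-1}$), whereas you obtain the same constraint from the inductive hypothesis on $\xi_{n-1}$ plus centerlessness of $F_{n-1}$; both are sound. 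Second, and more substantially, the paper reverses the roles of the two assertions: it \emph{defines} $\xi_n := c_{\partial_n}^{-1}$, so the identification with the inner automorphism is free, and what must be proved is the recursion $\xi_{n+1}=\partial_n\xi_n$; this is settled in one line by applying the projection $P_n\twoheadrightarrow P_{n-1}$ (induced by $x_n\mapsto 1$), which sends $c_{\partial_n}$ to $c_{\partial_{n-1}}$ and kills the $F_{n-1}$-factor $\partial_{n-1}^k$, pinning the exponent. You define $\xi_n$ by the recursion and must then fix the exponent $t$ in $c_{\partial_n}^{-1}=\xi_n^t$ by an explicit Artin-action computation on the generators $A_{j,n+1}$ --- correct but noticeably heavier, and you rightly identify it as the delicate point (your ``bootstrap'' via centrality one level up is indeed circular as a primary argument). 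Note that the paper's projection trick slots directly into your induction: projecting $c_{\partial_n}^{-1}=\partial_{n-1}^t\xi_{n-1}^t$ to $P_{n-1}$ gives $c_{\partial_{n-1}}^{-1}=\xi_{n-1}^t=c_{\partial_{n-1}}^{-t}$, hence $t=1$ since $c_{\partial_{n-1}}$ has infinite order for $n\geq 3$; adopting it would spare you the computation entirely.
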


\begin{rmq}\label{formula_xi_n}
The relation $\xi_{n+1} = \partial_n \cdot \xi_n$ gives the usual formula for this central element \cite[cor. 1.8.4]{Birman}:
\[\xi_n = (A_{1n} A_{2n} \cdots A_{n-1,n})(A_{1, n-1} \cdots A_{n-2,n-1}) \cdots (A_{13}A_{23})  A_{12}.\]
\[\begin{tikzpicture}
\node[draw=none] at (2,5) {$\cdots$};
\node[draw=none] at (2,0) {$\cdots$};
\begin{knot}[clip width = 6, flip crossing/.list={1,3,5,7,9,11}]
\strand[very thick]  
   (0, 0)  .. controls  +(0, 1) and +(0, -0.5) .. (4.2, 4)
   node[at start,circle, fill, inner sep=2pt]{}
   .. controls  +(0, 0.5) and +(0, -1) .. (0, 5)
   node[at end,circle, fill, inner sep=2pt]{}
   node[at end, above=2, scale=0.85]{$1$};
\strand[very thick]  
   (1, 0)  .. controls  +(0, 1) and +(0, -0.5) .. (4.2, 3)
   node[at start,circle, fill, inner sep=2pt]{} 
   .. controls  +(0, 0.5) and +(0, -0.5) .. (-.2, 4) 
   .. controls  +(0, 0.5) and +(0, -0.5) .. (1, 5)
   node[at end,circle, fill, inner sep=2pt]{}
   node[at end, above=2, scale=0.85]{$2$};
\strand[very thick]  
   (3, 0) .. controls  +(0, 0.5) and +(0, -0.5) .. (4.2, 1)
   node[at start,circle, fill, inner sep=2pt]{}
   .. controls  +(0, 0.5) and +(0, -0.5) .. (-.2, 2) 
   .. controls  +(0, 0.5) and +(0, -1) .. (3, 5)
   node[at end,circle, fill, inner sep=2pt]{}
   node[at end, above=2, scale=0.85]{$n-1$};
\strand[very thick]  
   (4, 0)  .. controls  +(0, 1) and +(0, -0.5) .. (-.2, 1)
   node[at start,circle, fill, inner sep=2pt]{}
   .. controls  +(0, 0.5) and +(0, -1) .. (4, 5)
   node[at end,circle, fill, inner sep=2pt]{}
   node[at end, above=2, scale=0.85]{$n$};
\end{knot}
\end{tikzpicture}\]
\end{rmq}

\begin{proof}[Proof of Prop.~\ref{Z(Pn)}]
The result is clear for $n \leq 2$. We thus suppose $n \geq 3$.

Lemma \ref{Pn_cap_Fn} implies that $\mathcal Z(P_n)$ is not trivial: it contains the braid automorphism~$c_{\partial_n}$. We use the decomposition $P_n \cong F_{n-1} \rtimes P_{n-1}$ to show that this element generates $\mathcal Z(P_n)$. Let $w \beta \in \mathcal Z(F_{n-1} \rtimes P_{n-1})$. Proposition \ref{dec_of_centers} implies that $\beta$ should act on $F_{n-1}$ \emph{via} $c_w^{-1}$. From Lemma \ref{Pn_cap_Fn} , we deduce that $w \in \langle \partial_{n-1} \rangle$. As a consequence:
\begin{equation}\label{incl_ZPn}
\mathcal Z(P_n) \subseteq \left\langle \partial_{n-1} c_{\partial_{n-1}}^{-1} \right\rangle.
\end{equation}

Then, the central element $c_{\partial_n}$ must be equal to $\partial_{n-1}^k c_{\partial_{n-1}}^{-k}$ for some $k$. But since the projection from $P_n$ onto $P_{n-1}$ is induced by $x_n \mapsto 1$, we see that it sends $c_{\partial_n}$ to $c_{\partial_{n-1}}$, whence $k = 1$, and \eqref{incl_ZPn} is an equality. Moreover, we have obtained the induction relation $c_{\partial_n} = \partial_{n-1}^{-1} c_{\partial_{n-1}}$, which implies the relation we wanted for $\xi_n$, if we define $\xi_n$ to be $c_{\partial_n}^{-1}$.

Finally, we remark that for $n \geq 3$, the center of $B_n$ has to be a subgroup of $P_n$ (whence of $\mathcal Z(P_n)$), since its image in $\Sigma_n$ must be in the center of $\Sigma_n$, which is trivial. The other inclusion can be obtained by showing directly that $\xi_n$ commutes with the classical generators of $B_n$ (which is obvious from the geometric picture).
\end{proof}

An element of the free group is called \emph{primitive} when it is part of a basis of the free group. Recall the following easy result:

\begin{lem}\label{centralizers_in_Fn}
In the free group $F_n$, the centralizer of any primitive element $w$ is the cyclic group generated by $w$.
\end{lem}

\begin{proof}
Suppose that $(x_1 = w, x_2, ..., x_n)$ is a basis of $F_n$. If $g \in F_n$, the relation $x_1 g x_1^{-1}g^{-1} = 1$ cannot hold in the free group if any letter different from $x_1$ appears in the reduced expression of $g$ in the letters $x_i$, whence our claim.
\end{proof}

\begin{lem}\label{Pn_cap_Fn}
In $\Aut(F_n)$, the intersection between $P_n$ and $\Inn(F_n)$ is cyclic, generated by $c_{\partial_n}$, which is a central element of $P_n$.
\end{lem}

\begin{proof}
Let $\beta \in P_n$ be a braid automorphism which is also an inner automorphism $c_w$ for some $w \in F_n$. Then $c_w$ must fix $\partial_n$, that is, $w$ must commute with $\partial_n$. However, $\partial_n$ is a primitive element of $F_n$. Indeed, $(\partial_n, x_2, ..., x_n)$ is a basis of $F_n$, the change of basis being given by $\partial_n = x_1 \cdots x_n$ and $x_1 = \partial_n x_n^{-1} \cdots x_2^{-1}$. As a consequence, $w \in \langle \partial_n \rangle$. 
Thus $P_n \cap \Inn(F_n) \subseteq \langle c_{\partial_n} \rangle$. This inclusion is in fact an equality, since $c_{\partial_n}$ is a braid automorphism. Moreover, for any braid automorphism $\beta \in P_n$, we have $\beta c_{\partial_n} \beta^{-1} = c_{\beta(\partial_n)} = c_{\partial_n}$, hence $c_{\partial_n}$ is central in $P_n$.
\end{proof}

\subsection{The center of the Drinfeld-Kohno Lie ring}\label{par_Z(DKn)}

The above computation of $\mathcal Z(P_n)$ can readily be adapted to compute the center of the Lie ring of $P_n$, which is the Drinfeld-Kohno Lie ring $DK_n$. We use the decomposition $DK_n \cong \mathfrak L_n \rtimes DK_{n-1}$, induced by the decomposition $P_n \cong F_{n-1} \rtimes P_{n-1}$ (see \cref{reminders_DK}). The analogue of Lemma \ref{centralizers_in_Fn} holds in the free Lie ring:

\begin{lem}\label{centralizers_in_Ln}
In the free Lie ring $\mathfrak L_n$, the centralizer of an element $x \in \mathbb Z^n$ (of degree $1$) is $\mathbb Z \cdot \frac xd$, where $d$ is the $\gcd$ of the coefficients of $x$.
\end{lem}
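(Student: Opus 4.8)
The plan is to mirror the proof of Lemma~\ref{centralizers_in_Fn}: reduce to the centralizer of a single generator, and then exploit the universal enveloping algebra. Write $x = d\,x'$ with $x' = x/d$, so that the coordinates of $x'$ are globally coprime; equivalently, $x'$ is a unimodular vector, hence part of a basis of $\Z^n$. First I would reduce to the case of a generator. Choosing $g \in \GL_n(\Z)$ with $g(x') = e_1$ and using the functoriality of the free Lie ring, $g$ extends to an automorphism of $\mathfrak L(\Z^n)$ carrying the centralizer of $x$ onto the centralizer of $g(x) = d\,e_1$. Since $\mathfrak L(\Z^n)$ is torsion-free, $[d\,e_1, y] = d[e_1,y]$ vanishes if and only if $[e_1,y]$ does, so the centralizer of $x$ is the image under $g^{-1}$ of the centralizer of $e_1$. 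It thus suffices to show that the centralizer of $e_1$ is $\Z e_1$, since then $g^{-1}(\Z e_1) = \Z x' = \Z\cdot(x/d)$, as wanted.

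Next I would pass to the rational free Lie algebra. As $\mathfrak L(\Z^n)$ is a free $\Z$-module, it embeds into $\mathfrak L^{\mathbb Q} := \mathfrak L(\Z^n)\otimes\mathbb Q$, and the integral centralizer of $e_1$ is the intersection of $\mathfrak L(\Z^n)$ with the rational one. I would then realize $\mathfrak L^{\mathbb Q}$ inside its universal enveloping algebra $A = \mathbb Q\langle e_1, \dots, e_n\rangle$, the free associative algebra, where the Lie bracket is the commutator; thus $[e_1,y] = 0$ in $\mathfrak L^{\mathbb Q}$ if and only if $y$ lies in the associative centralizer $C_A(e_1)$.

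The crux is the computation $C_A(e_1) = \mathbb Q[e_1]$, which I would carry out by a direct elementary argument. Grade $A$ by the number $m$ of letters among $e_2,\dots,e_n$ occurring in a word; the operator $\ad_{e_1}$ preserves this grading, and on the piece $A_0 = \mathbb Q[e_1]$ it vanishes. For $m \geq 1$, fixing the ordered tuple of non-$e_1$ letters identifies the corresponding block with a polynomial ring $\mathbb Q[t_0, \dots, t_m]$, where $t_i$ records the power of $e_1$ sitting in the $i$-th of the $m+1$ gaps; under this identification $\ad_{e_1}$ becomes multiplication by $t_0 - t_m$. Since $\mathbb Q[t_0,\dots,t_m]$ is a domain and $t_0 - t_m \neq 0$, this map is injective, so $\ker(\ad_{e_1}) \cap A_m = 0$ for $m \geq 1$, whence $C_A(e_1) = \mathbb Q[e_1]$. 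Intersecting back with the free Lie algebra gives $\mathbb Q[e_1] \cap \mathfrak L^{\mathbb Q} = \mathbb Q e_1$, because among the powers $e_1^k$ only the degree-one one is primitive, hence a Lie element; finally intersecting with the integral lattice $\Z^n$ in degree one shows that the centralizer of $e_1$ over $\Z$ is exactly $\Z e_1$.

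The main obstacle is precisely the associative centralizer computation $C_A(e_1) = \mathbb Q[e_1]$; the rest is formal reduction. One could instead quote the classical fact that two elements of a free Lie algebra commute if and only if they are linearly dependent, but the gap-polynomial argument above keeps the proof self-contained and isolates the only step that genuinely requires care.
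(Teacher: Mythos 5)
Your proof is correct and follows essentially the same route as the paper: reduce to the case where $x/d$ is a basis vector, pass to the enveloping (free associative) algebra where the centralizer of a generator is the polynomial ring on that generator, and observe that only the linear polynomials are Lie elements. The only difference is that you supply an explicit gap-polynomial argument for the associative centralizer computation, which the paper simply asserts.
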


\begin{proof}
Consider a basis $(x_1 = x/d, x_2, ..., x_n)$ of $\mathbb Z^n$. 
In the tensor algebra $T(\mathbb Z^n)$, which is the enveloping ring of $\mathfrak L_n$, the centralizer of $d x_1$ consists of all polynomials in $x_1$ only. Among these, the only ones belonging to $\mathfrak L_n$ (the only \emph{primitive} ones, in the sense of Hopf algebras) are the linear ones.
\end{proof}

We also have an analogue of Lemma \ref{Pn_cap_Fn} in this context (see Def.~\ref{defi_braid_der} for the definition of braid-like derivation): 
\begin{lem}\label{DKn_cap_Ln}
In $\Der(\mathfrak L_n)$, the intersection between $\Der^\partial_t(\mathfrak L_n)$ and $\Inn(\mathfrak L_n)$ is cyclic, generated by $\ad_{\overline \partial_n}$, which is a central element of $\Der^\partial_t(\mathfrak L_n)$.
\end{lem}

\begin{proof}
Let $X \in \mathfrak L_n$ such that the inner derivation $\ad_X$ is braid-like. Then $\ad_X(\overline \partial_n) = 0$, that is, $X$ must be in the centralizer of $\overline\partial_n$. We deduce from Lemma \ref{centralizers_in_Ln} that $X \in \mathbb Z \cdot \overline\partial_n$. 
Thus $\Der^\partial_t(\mathfrak L_n) \cap \ad(\mathfrak L_n) \subseteq \mathbb Z \cdot \overline\partial_n$. This inclusion is in fact an equality, since $\ad_{\overline \partial_n}$ is a braid-like derivation. Moreover, for any braid-like derivation $d$, we have:
\[ \left[d,\ad_{\overline \partial_n}\right] = d \left(\left[\overline \partial_n, -\right]\right) - \left[\overline \partial_n, d(-)\right] = \left[ d\left(\overline \partial_n\right), -\right] = 0,\] hence $\ad_{\overline \partial_n}$ is central in $\Der^\partial_t(\mathfrak L_n)$.
\end{proof}

Finally, we can use Lemmas \ref{centralizers_in_Ln} and \ref{DKn_cap_Ln} to compute the center of $DK_n$:
\begin{prop}\label{Z(DKn)}
The center of $DK_n$ is cyclic, generated by the element $\overline \xi_n$ defined by $\overline \xi_1 = 0$ and $\overline \xi_{n+1} = \overline \partial_n  + \overline \xi_n$. As a braid derivation, $\overline \xi_n =  -\ad(\overline \partial_n)$.
\end{prop}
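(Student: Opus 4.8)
The plan is to mimic, essentially line by line, the computation of $\mathcal Z(P_n)$ in Proposition~\ref{Z(Pn)}, replacing the group-theoretic inputs by their Lie-theoretic counterparts. Concretely, I would work with the decomposition $DK_n \cong \mathfrak L_{n-1} \rtimes DK_{n-1}$ and apply the Lie-algebra half of Proposition~\ref{dec_of_centers}, letting Lemma~\ref{DKn_cap_Ln} play the exact role that Lemma~\ref{Pn_cap_Fn} played in the group case. There are really two assertions to establish: that $\mathfrak z(DK_n) = \mathbb Z\,\overline\xi_n$ with the stated recursion, and that, under the identification of $DK_n$ with a subring of $\Der(\mathfrak L_n)$ coming from Theorem~\ref{Andreadakis_for_Pn}, the generator is $\overline\xi_n = -\ad(\overline\partial_n)$. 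I expect this second assertion to be the crux, and here lies the one genuine difference with the group case: whereas $c_{\partial_n}$ automatically lies in $P_n = \Aut^{\partial}_C(F_n)$, its analogue $\ad(\overline\partial_n)$ is a priori only \emph{braid-like}, i.e.\ an element of $\Der_t^{\partial}(\mathfrak L_n)$, and one must check that it is in fact a genuine \emph{braid derivation}, i.e.\ lies in the strictly smaller subring $DK_n$.

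First I would dispose of this key point by a direct computation. Unwinding the recursion $\overline\xi_{n+1} = \overline\partial_n + \overline\xi_n$ gives the closed form $\overline\xi_n = \sum_{1 \leq i < j \leq n} t_{ij}$, the exact Lie analogue of the formula for $\xi_n$ in Remark~\ref{formula_xi_n}, which is manifestly an element of $DK_n$. I would then compute its action on a generator $X_k = t_{k,n+1}$ of $\mathfrak L_n$, where each $[t_{ij},t_{k,n+1}]$ is computed inside $DK_{n+1} \cong \mathfrak L_n \rtimes DK_n$, using the Drinfeld--Kohno relations of Proposition~\ref{Drinfeld-Kohno}: the disjointness relation kills $[t_{ij}, t_{k,n+1}]$ whenever $k \notin \{i,j\}$, and the relation $[t_{k,n+1}, t_{kl} + t_{l,n+1}] = 0$ turns each surviving term into $[t_{kl}, t_{k,n+1}] = [t_{k,n+1}, t_{l,n+1}]$. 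Summing over $l \neq k$ collapses the action to $[X_k, \overline\partial_n] = -\ad(\overline\partial_n)(X_k)$, which proves $\overline\xi_n = -\ad(\overline\partial_n)$ as a braid derivation. As a bonus, since $\ad(\overline\partial_n)$ is central in $\Der_t^{\partial}(\mathfrak L_n)$ by Lemma~\ref{DKn_cap_Ln} and $DK_n \subseteq \Der_t^{\partial}(\mathfrak L_n)$, the injectivity of the Johnson morphism yields at once that $\overline\xi_n$ is central in $DK_n$.

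It then remains to show that $\overline\xi_n$ generates the whole center. For this I would take an arbitrary central element of $DK_n \cong \mathfrak L_{n-1} \rtimes DK_{n-1}$, written $X + Y$ with $X \in \mathfrak L_{n-1}$ and $Y \in DK_{n-1}$. The Lie-algebra half of Proposition~\ref{dec_of_centers} forces $Y$ to act on $\mathfrak L_{n-1}$ through $-\ad(X)$; hence $\ad(X)$ is at once an inner derivation and, being equal to $-Y \in DK_{n-1}$, a braid derivation, so it lies in $\Der_t^{\partial}(\mathfrak L_{n-1}) \cap \Inn(\mathfrak L_{n-1}) = \mathbb Z\,\ad(\overline\partial_{n-1})$ by Lemma~\ref{DKn_cap_Ln}. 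Since $\mathfrak L_{n-1}$ is centreless (for $n \geq 3$; the cases $n \leq 2$ are immediate, as $DK_1 = 0$ and $DK_2 \cong \mathbb Z$), this gives $X = k\,\overline\partial_{n-1}$ for some $k \in \mathbb Z$. The computation of the previous paragraph, applied one rank lower, says $\overline\xi_{n-1} = -\ad(\overline\partial_{n-1})$ as derivations of $\mathfrak L_{n-1}$, so $Y = -\ad(X) = -k\,\ad(\overline\partial_{n-1}) = k\,\overline\xi_{n-1}$, whence $X + Y = k\,\overline\xi_n$. Combined with the centrality already established, this gives $\mathfrak z(DK_n) = \mathbb Z\,\overline\xi_n$ and the recursion.

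The main obstacle, as flagged, is the membership $\ad(\overline\partial_n) \in DK_n$: unlike in the group setting, where $c_{\partial_n} \in P_n$ comes for free, here this is exactly what the explicit bracket computation of the second step must secure. Everything else is a faithful transcription of the proof of Proposition~\ref{Z(Pn)}, with Proposition~\ref{dec_of_centers}, Lemma~\ref{DKn_cap_Ln} and the injectivity of the Johnson morphism standing in for their group-level analogues. Note in particular that no projection argument of the kind used to pin down the exponent in the group case is needed here, since the Lie computation produces the generator directly.
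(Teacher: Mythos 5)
Your proposal is correct and follows the same skeleton as the paper's proof: the decomposition $DK_n \cong \mathfrak L_{n-1} \rtimes DK_{n-1}$, the Lie-algebra half of Prop.~\ref{dec_of_centers}, and Lemma \ref{DKn_cap_Ln} all play exactly the roles you assign them. You diverge from the paper in two small but genuine ways. First, for the key membership $\ad(\overline\partial_n) \in DK_n$, you carry out the explicit bracket computation from the closed formula $\overline\xi_n = \sum_{i<j} t_{ij}$ and the Drinfeld--Kohno relations; the paper instead cites Prop.~\ref{DK_eng_deg_1} (degree-one surjectivity of $\tau$) or the identification of $\ad_{\overline\partial_n}$ with $\tau(\overline{c_{\partial_n}})$, relegating your computation to the remark following the statement. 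Second, your endgame is cleaner: where the paper only obtains the containment $\mathfrak z(DK_n) \subseteq \Z\cdot(\overline\partial_{n-1} - \ad_{\overline\partial_{n-1}})$ and then needs a projection onto $DK_{n-1}$ to pin down the integer $k$ (with, incidentally, a sign slip in that step of the published proof), you use the injectivity of the Johnson morphism to identify $Y = k\,\overline\xi_{n-1}$ directly and read off the recursion, so no projection argument is needed. Both variants are valid; yours is marginally more self-contained, at the cost of redoing a computation the paper has already packaged elsewhere.
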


\begin{rmq}
Out of the relation $\overline \xi_{n+1} = \overline \partial_n  + \overline \xi_n$, we get a formula for this central element:
\[\overline  \xi_n = \sum\limits_{i < j} t_{ij} \in DK_n.\]
If we compare this formula with that of remark \ref{formula_xi_n}, we can see directly that $\overline  \xi_n$ is the class of the central element $\xi_n$ of $P_n$.
\end{rmq}

\begin{proof}[Proof of Prop. \ref{Z(Pn)}]
The result is trivial for $n \leq 2$. We thus suppose $n \geq 3$.

We first remark that the braid-like derivation $\ad_{\overline \partial_n}$ of Lemma \ref{DKn_cap_Ln} is in fact a braid derivation. Indeed, $DK_n$ contains all braid-like derivations of degree $1$ (see Prop.~\ref{DK_eng_deg_1}). Alternatively, we can use formula \eqref{tau(c_g)} to see that $\ad_{\overline \partial_n}$ identifies with the class of the braid automorphism $c_{\partial_n}$ \emph{via} the Johnson morphism $\tau: DK_n \hookrightarrow \Der^\partial_t(\mathfrak L_n)$.

Since $DK_n$ identifies with a Lie subring of $\Der^\partial_t(\mathfrak L_n)$, Lemma \ref{DKn_cap_Ln} implies that $\mathfrak z(DK_n)$ is not trivial: it contains the braid derivation~$\ad_{\overline \partial_n}$. We use the decomposition $DK_n \cong \mathfrak L_{n-1} \rtimes DK_{n-1}$ to show that this element generates $\mathfrak z(DK_n)$. Let $X + d \in \mathfrak L_{n-1} \rtimes DK_{n-1}$. Proposition \ref{dec_of_centers} implies that $d$ should act on $\mathfrak L_{n-1}$ \emph{via} $-\ad_X$. From Lemma \ref{DKn_cap_Ln} , we deduce that $X \in \Z \cdot \overline \partial_{n-1}$. As a consequence:
\begin{equation}\label{incl_ZDKn}
\mathfrak z(DK_n) \subseteq \Z \cdot (\overline \partial_{n-1} -\ad_{\overline \partial_{n-1}}) \subset \mathfrak L_{n-1} \rtimes DK_{n-1}.
\end{equation}

The central element $\ad_{\overline \partial_n}$ must then be equal to $k \cdot (\overline \partial_{n-1} -\ad_{\overline \partial_{n-1}})$ for some integer $k$. But since the projection from $DK_n$ onto $DK_{n-1}$ is induced by $X_n \mapsto 0$, we see that it sends $\ad_{\overline \partial_n}$ to $\ad_{\overline \partial_{n-1}}$, whence $k = -1$, and  \eqref{incl_ZDKn} is an equality. Moreover, we have obtained the induction relation $\ad_{\overline \partial_n} = - \overline \partial_{n-1} + \ad_{\overline \partial_{n-1}}$, which implies the relation we wanted for $\overline \xi_n$, if we define $\overline  \xi_n$ to be $- \ad_{\overline \partial_n}$.
\end{proof}

\subsection{Inner automorphisms of the pure braid group}\label{par_Inn(Pn)}

Let us denote, as usual, the quotient $P_n/ \mathcal Z(P_n)$ by $P_n^*$. In a similar fashion, we denote by $DK_n^*$ the quotient of the Drinfeld-Kohno Lie ring $DK_n$ by its center.

\begin{theo}\label{Andreadakis_for_Inn(Pn)}
The subgroup $\Inn(P_n)$ of $IA(P_n)$  ($\subset \Aut(P_n)$) satisfies the Andrea\-dakis equality. Equivalently:
\[\Lie(P_n^*) \cong DK_n^*.\]
\end{theo}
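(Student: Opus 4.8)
The plan is to apply Corollary~\ref{Andreadakis_for_Inn_LCS} to the group $G = P_n$. By that corollary, the Andreadakis equality holds for $\Inn(P_n)$ if and only if every central element of $\Lie(P_n) = DK_n$ is the class of some central element of $P_n$. The work of the preceding subsections has been arranged precisely to verify this condition: Proposition~\ref{Z(Pn)} computes $\mathcal Z(P_n)$ to be cyclic generated by $\xi_n$, and Proposition~\ref{Z(DKn)} computes $\mathfrak z(DK_n)$ to be cyclic generated by $\overline \xi_n$. The remark following Proposition~\ref{Z(DKn)} already observes, by comparing the explicit formulas $\overline \xi_n = \sum_{i<j} t_{ij}$ and $\xi_n = (A_{1n}\cdots A_{n-1,n})\cdots A_{12}$, that $\overline \xi_n$ is exactly the class of $\xi_n$ in $DK_n = \Lie(P_n)$.

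First I would set up the correspondence cleanly. Since both $\mathcal Z(P_n)$ and $\mathfrak z(DK_n)$ are infinite cyclic (for $n \geq 3$; the cases $n \leq 2$ being trivial as both centers are easily described directly), it suffices to check that the chosen generator $\xi_n$ of $\mathcal Z(P_n)$ has class $\overline \xi_n$ generating $\mathfrak z(DK_n)$. The map sending a central element of $P_n$ to its class in $\Lie(P_n)$ lands in $\mathfrak z(\Lie(P_n))$ because the bracket in $\Lie(P_n)$ is induced by commutators; the point is surjectivity onto $\mathfrak z(DK_n)$. Given the two cyclic computations and the matching of generators noted in the remark, $\xi_n \mapsto \overline \xi_n$ shows that the generator of $\mathfrak z(DK_n)$ is attained, hence every central element of $DK_n$ is a class of a central element of $P_n$. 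This is the condition of Corollary~\ref{Andreadakis_for_Inn_LCS}, so the Andreadakis equality for $\Inn(P_n)$ follows.

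For the equivalent reformulation $\Lie(P_n^*) \cong DK_n^*$, I would invoke the third bullet of Corollary~\ref{Andreadakis_for_Inn_LCS}, which states that the Andreadakis equality for $\Inn(G)$ is equivalent to the canonical projection $\Lie(G/\mathcal Z(G)) \twoheadrightarrow \Lie(G)/\mathfrak z(\Lie(G)) = \ad(\Lie(G))$ being an isomorphism. Applied to $G = P_n$, and using the notation $P_n^* = P_n/\mathcal Z(P_n)$ and $DK_n^* = DK_n/\mathfrak z(DK_n)$, this reads exactly as $\Lie(P_n^*) \cong DK_n^*$. So the two assertions in the theorem are literally the two equivalent conditions of the corollary, and proving the first immediately yields the second.

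The only genuine content beyond citing the corollary is the identification $\overline{\xi_n} = \overline \xi_n$, i.e.\ that the class of the central braid element really is the generator of the center of $DK_n$. I expect this to be the main (and only real) obstacle, but it is not serious: the matching of the two explicit product/sum formulas for $\xi_n$ and $\overline\xi_n$, already recorded in the remark after Proposition~\ref{Z(DKn)}, settles it, since abelianizing the word $A_{1n}A_{2n}\cdots A_{12}$ degree by degree and taking leading terms yields $\sum_{i<j} t_{ij}$. Alternatively, one can argue structurally: the parallel inductive relations $\xi_{n+1} = \partial_n \cdot \xi_n$ and $\overline\xi_{n+1} = \overline\partial_n + \overline\xi_n$, together with the fact that $\overline{\partial_n}$ is the class of $\partial_n$ under $\mathfrak L_{n-1} = \Lie(F_{n-1}) \subset DK_n$, show by induction that $\overline\xi_n$ is the class of $\xi_n$. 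Either route closes the argument, so the theorem follows directly from Corollary~\ref{Andreadakis_for_Inn_LCS}.
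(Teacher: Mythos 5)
Your proposal is correct and follows exactly the paper's own argument: apply Corollary~\ref{Andreadakis_for_Inn_LCS} to $G=P_n$, using Proposition~\ref{Z(DKn)} for the computation of $\mathfrak z(DK_n)$ and the observation (recorded in the remark after that proposition) that $\overline\xi_n$ is the class of the central element $\xi_n$. The extra care you take in justifying the identification $\overline{\xi_n}=\overline\xi_n$ and in spelling out the reformulation $\Lie(P_n^*)\cong DK_n^*$ is consistent with, and slightly more detailed than, the paper's two-line proof.
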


\begin{proof}
The only central elements of $DK_n$ are multiples of $\overline \xi_n$, which is the class of the central element $\xi_n \in P_n$  (Prop.~\ref{Z(DKn)}). Thus, our claim is obtained as a direct application of Cor.~\ref{Andreadakis_for_Inn_LCS} to $G = P_n$. 
\end{proof}

\begin{cor}\label{Andreadakis_for_Inn(Pn*)}
The Lie algebra $DK_n^*$, and hence the group $P_n^*$, are centerless. As a consequence, the subgroup $\Inn(P_n^*)$ of $IA(P_n^*)$ also satisfies the Andreadakis equality. 
\end{cor}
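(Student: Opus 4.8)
**Proof plan for Corollary \ref{Andreadakis_for_Inn(Pn*)}.**

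The plan is to chain together Corollary \ref{Andreadakis_for_Inn_LCS} with the center computation we have just carried out. The corollary tells us that the Andreadakis equality for $\Inn(H)$, for any group $H$, is equivalent to the condition that every central element of $\Lie(H)$ is the class of a central element of $H$. So to prove the equality for $\Inn(P_n^*)$, I would first establish that $P_n^*$ and its Lie ring $DK_n^*$ are centerless, because if a group and its associated Lie ring both have trivial center, the condition of Corollary \ref{Andreadakis_for_Inn_LCS} is satisfied vacuously: there are no nonzero central elements in $\Lie(H)$ to lift.

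First I would show $\mathfrak z(DK_n^*) = 0$. By Theorem \ref{Andreadakis_for_Inn(Pn)} we have the identification $\Lie(P_n^*) \cong DK_n^*$, where $DK_n^*$ is by definition the quotient $DK_n/\mathfrak z(DK_n)$. Now $\mathfrak z(DK_n)$ is exactly $\ad(DK_n)$'s kernel, i.e. the image of $DK_n$ in its inner derivations has trivial center precisely when $DK_n^* = DK_n/\mathfrak z(DK_n)$ is centerless. The general fact I would invoke is that for any Lie algebra $\mathfrak g$, the quotient $\mathfrak g/\mathfrak z(\mathfrak g)$ need not be centerless in general, so this requires a small argument. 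Since Proposition \ref{Z(DKn)} shows that $\mathfrak z(DK_n)$ is the cyclic group generated by $\overline\xi_n$, I would verify that $DK_n$ has no element whose bracket with every element of $DK_n$ lands in $\mathbb Z\cdot\overline\xi_n$ but which is not itself in $\mathbb Z\cdot\overline\xi_n$; equivalently, that the center of $DK_n^*$ pulls back to exactly $\mathfrak z(DK_n)$ in $DK_n$. This is where I expect the only real work to lie.

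The cleanest route to settle $\mathfrak z(DK_n^*) = 0$ is to reuse the semi-direct product machinery already in place. Using the decomposition $DK_n \cong \mathfrak L_{n-1} \rtimes DK_{n-1}$ and Proposition \ref{dec_of_centers}, the computation in Proposition \ref{Z(DKn)} in fact pins down the center very rigidly (a single generator coming from the boundary element). I would argue that any element of $DK_n$ central modulo $\overline\xi_n$ must, by the same fixed-point and derivation analysis used in Proposition \ref{Z(DKn)}, already be a scalar multiple of $\overline\xi_n$; intuitively the boundary element $\overline\partial_n$ is the only source of centrality, and once it is quotiented out nothing remains. Granting $\mathfrak z(DK_n^*) = 0$, the isomorphism $\Lie(P_n^*) \cong DK_n^*$ from Theorem \ref{Andreadakis_for_Inn(Pn)} immediately gives $\mathfrak z(\Lie(P_n^*)) = 0$; since a group whose associated graded Lie ring is centerless must itself be centerless (a nonzero central element of the group would survive to a nonzero central class in the Lie ring), we conclude $\mathcal Z(P_n^*) = \{1\}$.

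Finally, with both $P_n^*$ and $\Lie(P_n^*) \cong DK_n^*$ centerless, the second bullet of Corollary \ref{Andreadakis_for_Inn_LCS} holds trivially, and therefore $\Inn(P_n^*)$ satisfies the Andreadakis equality. The main obstacle, as flagged above, is the passage from ``$\mathfrak z(DK_n)$ is cyclic'' to ``$DK_n^*$ is centerless''; I would handle it by observing that the argument of Proposition \ref{Z(DKn)}, being an exact determination of all central elements via Proposition \ref{dec_of_centers}, leaves no room for extra central classes upstairs, so no hidden center can appear after quotienting.
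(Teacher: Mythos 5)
Your overall strategy matches the paper's: establish that $DK_n^*$ is centerless, deduce that $P_n^*$ is centerless, and then apply Corollary \ref{Andreadakis_for_Inn_LCS} vacuously. But the step you yourself flag as ``where the only real work lies'' --- showing $\mathfrak z(DK_n^*)=0$ --- is not actually carried out, and the route you sketch is not the one that works cleanly. Proposition \ref{dec_of_centers} characterizes elements whose brackets with everything \emph{vanish}; what you need here is to characterize elements of $DK_n \cong \mathfrak L_{n-1}\rtimes DK_{n-1}$ whose brackets with everything land in $\mathbb Z\cdot\overline\xi_n$, and since $\overline\xi_n = \overline\partial_{n-1}-\ad_{\overline\partial_{n-1}}$ has nonzero components in both factors of the semi-direct product, the fixed-point/derivation analysis does not transfer verbatim. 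The paper's actual argument is much shorter and bypasses the semi-direct product entirely: $\mathfrak z(DK_n)=\mathbb Z\cdot\overline\xi_n$ is concentrated in degree $1$, while any bracket $[x,y]$ of elements of the graded Lie ring $DK_n$ has degree at least $2$; hence ``$[x,y]\in\mathfrak z(DK_n)$ for all $y$'' forces $[x,y]=0$ for all $y$, i.e.\ $x\in\mathfrak z(DK_n)$ and $\overline x=0$ in $DK_n^*$. This degree observation is the missing idea; without it (or a careful reworking of the semi-direct product analysis modulo $\overline\xi_n$) your proof of $\mathfrak z(DK_n^*)=0$ is incomplete.

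There is a second, smaller gap: your parenthetical claim that ``a group whose associated graded Lie ring is centerless must itself be centerless'' is false in general (a perfect group has trivial Lie ring but may well have a center). The implication holds only for residually nilpotent groups, where a nontrivial central element has a nonzero class in some $\Gamma_k/\Gamma_{k+1}$, and that class is then central in the Lie ring. The paper supplies exactly this hypothesis: $P_n$ is residually nilpotent (it embeds in $IA_n$), and Lemma \ref{G/Z_res_nilp} shows that $P_n^* = P_n/\mathcal Z(P_n)$ is residually nilpotent as well. You should add this step before concluding $\mathcal Z(P_n^*)=\{1\}$. The final application of Corollary \ref{Andreadakis_for_Inn_LCS} is then correct as you state it.
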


\begin{proof}
We use the fact that the only non-trivial central elements of $DK_n$ are in degree one. Let $\overline x \in \mathfrak z(DK_n^*)$ be the class of $x \in DK_n$. Let $y$ be any element of $DK_n$. Then the bracket $[x,y]$ must be in $\mathfrak z(DK_n)$ (since its class in $DK_n^*$ is trivial). But since its degree is at least $2$, it must be trivial. Hence $x \in \mathfrak z(DK_n)$, whence $\overline x = 0$.

In order to deduce that $P_n^*$ is centerless, we can use the fact that it is residually nilpotent, since $P_n$ is (Lemma \ref{G/Z_res_nilp}), and thus any non-trivial element in its center would give a non-trivial class in its Lie algebra, which must be central too.

The last statement is then a direct application of Cor.~\ref{Andreadakis_for_Inn_LCS} to $G = P_n^*$.
\end{proof}

\begin{rmq}\label{rmq_on_splitting}
Part of the results of Th.~\ref{Andreadakis_for_Inn(Pn)} and its corollary can be deduced from the (classical) calculation of the center of $P_n$ and from the classical (non-canonical) splitting $P_n \cong P_n^* \times \mathcal Z(P_n)$ recalled in the appendix (Cor.~\ref{splitting_of_center_for_Pn}). In fact, if we apply the Lie functor to this direct product, we get:
\[DK_n = \Lie(P_n) \cong \Lie(P_n^*) \times  \Lie(\mathcal Z(P_n)) \cong \Lie(P_n^*) \times \Z \cdot \overline \xi_n,\]
where the right factor is the abelian Lie subring generated by the class $\overline \xi_n = \sum t_{ij}$ of the generator $\xi_n$ of $\mathcal Z(P_n)$. Thus we get the computation $\Lie(P_n^*) \cong DK_n/\overline \xi_n$. Moreover, we see easily that $P_n^*$ is centerless (Prop.~\ref{splitting_of_center}). However, neither does this imply that $\overline \xi_n$ generates the center of $DK_n$, nor do we get the statements about Andreadakis equalities without computing this center.
\end{rmq}

\section{Products of subgroups of \texorpdfstring{$IA_n$}{IAn}}\label{section_products}

Here we turn to the proof of our key result (Th.~\ref{key_theo}), which generalises \cite[Prop.~4.1]{Darne2}. In order to do this, we study filtrations on products $HK$, where $H$ and $K$ are subgroups of a given group $G$, such that $K$ normalizes $H$. Namely, we investigate the behaviour of the lower central series of $HK$, and of a filtration $G_* \cap (HK)$ induced by a filtration $G_*$ of $G$, with respect to the product decomposition of $HK$.

\subsection{Lower central series of products of subgroups}\label{par_LCS_of_HK}

Let $G$ be a group, and let $H$ and $K$ be subgroups of $G$, such that $K$ normalizes $H$. Then $K$ acts on $H$ by conjugation in $G$, we can form the corresponding semi-direct product $H \rtimes K$ and we get a well-defined morphism
$H \rtimes K \rightarrow G$
given by $(h,k) \mapsto hk$.  The image of this morphism is the subgroup $HK$ of $G$. Its kernel, given by the elements $(h,k) \in H \rtimes K$ such that $hk = 1$, is isomorphic to $H \cap K$, via $k \mapsto (k^{-1},k)$. Thus we get a short exact sequence of groups:
\[H \cap K \hookrightarrow H \rtimes K \twoheadrightarrow HK.\]
The surjection on the right induces a surjection $\Gamma_i(H \rtimes K) \twoheadrightarrow \Gamma_i(HK)$, for all $i \geq 1$. Since $\Gamma_i(H \rtimes K) = \Gamma_i^K(H) \rtimes \Gamma_i(K)$ (see Prop.~\ref{lcs_of_sdp}), this implies:
\begin{prop}
Let $K$ and $H$ be subgroups of a group $G$, such that $K$ normalizes $H$. Then:
\[\forall i \geq 1,\ \Gamma_i(HK) = \Gamma_i^K(H)\Gamma_i(K).\]
\end{prop}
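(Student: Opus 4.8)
The plan is to transport the known description of the lower central series of the semi-direct product $H \rtimes K$ (Prop.~\ref{lcs_of_sdp}) along the surjection $\phi : H \rtimes K \twoheadrightarrow HK$, $(h,k) \mapsto hk$, constructed just above the statement. The whole argument reduces to understanding the image under $\phi$ of each term $\Gamma_i(H \rtimes K)$.

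First I would record the elementary fact that a surjective group morphism $\phi : A \twoheadrightarrow B$ satisfies $\phi(\Gamma_i(A)) = \Gamma_i(B)$ for all $i \geq 1$. This is an immediate induction: the case $i = 1$ is the surjectivity of $\phi$, and assuming the claim at rank $i$ we get $\phi(\Gamma_{i+1}(A)) = \phi([A, \Gamma_i(A)]) = [\phi(A), \phi(\Gamma_i(A))] = [B, \Gamma_i(B)] = \Gamma_{i+1}(B)$, where the middle equality uses that $\phi$ carries the subgroup generated by the commutators of two subsets onto the subgroup generated by the commutators of their images. Applying this to our $\phi$ yields $\Gamma_i(HK) = \phi\bigl(\Gamma_i(H \rtimes K)\bigr)$ for all $i$.

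Next I would substitute the decomposition $\Gamma_i(H \rtimes K) = \Gamma_i^K(H) \rtimes \Gamma_i(K)$ from Prop.~\ref{lcs_of_sdp} and compute its image. Since $\phi$ restricts to the inclusion of the copy of $H$ into $G$ and to the inclusion of the copy of $K$ into $G$, the image of $\Gamma_i^K(H) \rtimes \Gamma_i(K)$ is exactly the set of products $\{hk \mid h \in \Gamma_i^K(H),\ k \in \Gamma_i(K)\}$, that is, $\Gamma_i^K(H)\Gamma_i(K)$. One should check that this product of subsets is genuinely a subgroup of $G$: this holds because $\Gamma_i^K(H)$, being a term of the filtration $\Gamma_*^{H \rtimes K}(H)$, is normal in $H \rtimes K$, hence normalized by $K$ and \emph{a fortiori} by $\Gamma_i(K)$, so $\Gamma_i^K(H)\Gamma_i(K)$ is the subgroup generated by the two factors.

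The one point requiring care — and the closest thing to an obstacle — is to make sure that the action used to define $\Gamma_i^K(H)$ inside $H \rtimes K$ agrees with honest conjugation in $G$, so that $\phi(\Gamma_i^K(H))$ really is the subgroup $\Gamma_i^K(H)$ of $G$ computed with the conjugation action of $K$ on $H$. This is built into the construction of $H \rtimes K$, whose $K$-action on $H$ is \emph{by definition} the conjugation in $G$, so no discrepancy arises. Chaining $\Gamma_i(HK) = \phi(\Gamma_i(H \rtimes K)) = \phi\bigl(\Gamma_i^K(H) \rtimes \Gamma_i(K)\bigr) = \Gamma_i^K(H)\Gamma_i(K)$ then gives the claimed formula.
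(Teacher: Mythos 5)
Your proof is correct and follows essentially the same route as the paper: form the surjection $H \rtimes K \twoheadrightarrow HK$, use that a surjective morphism carries $\Gamma_i$ onto $\Gamma_i$, and substitute the decomposition $\Gamma_i(H \rtimes K) = \Gamma_i^K(H) \rtimes \Gamma_i(K)$ from Prop.~\ref{lcs_of_sdp}. The extra verifications you include (that the product of the two images is a subgroup, and that the $K$-action defining $\Gamma_i^K(H)$ is genuine conjugation in $G$) are correct and left implicit in the paper.
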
 
Moreover, the kernel of $\Gamma_i(H \rtimes K) \twoheadrightarrow \Gamma_i(HK)$ consists of elements $k \in H \cap K$ such that $(k^{-1},k) \in \Gamma_i(H \rtimes K)$: it is $\Gamma_i^K(H) \cap \Gamma_i(K) \subseteq H \cap K$. Thus we get a short exact sequence of filtrations:
\[\Gamma_*^K(H) \cap \Gamma_*(K) \hookrightarrow \Gamma_*^K(H) \rtimes \Gamma_*(K)\twoheadrightarrow \Gamma_*(HK).\]

\subsection{Decomposition of an induced filtration}\label{par_restr_to_HK}

Let $G_*$ be a filtration, and let $H$ and $K$ be subgroups of $G = G_1$ such that $K$ normalizes $H$. Then we can consider two filtrations on $HK$: the induced filtration $G_* \cap (HK)$ and the product of induced filtrations $(G_* \cap H)(G_* \cap K)$. The former obviously contains the latter. We now describe a criterion for this inclusion to be an equality.

\begin{prop}\label{Dec_induced_filtration}
In the above setting, the following assertions are equivalent:
\begin{enumerate}[label={(\roman*)}]
\item \label{item1} $G_* \cap (HK) = (G_* \cap H)(G_* \cap K),$
\item \label{item2} Inside $\Lie(G_*)$,\ \ $\Lie(G_* \cap H) \cap \Lie(G_* \cap K) = \Lie\left(G_* \cap (H \cap K)\right).$
\end{enumerate}
\end{prop}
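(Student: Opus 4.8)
The plan is to compare the two filtrations $L_* := (G_*\cap H)(G_*\cap K)$ and $M_* := G_*\cap(HK)$ on the group $HK$ by passing to associated graded Lie rings. The inclusion $L_*\subseteq M_*$ (with $L_1=M_1=HK$) induces a morphism $\iota_\#\colon\Lie(L_*)\to\Lie(M_*)$, and the strategy is to reduce the equality in \ref{item1} to the injectivity of $\iota_\#$, and then to identify $\ker(\iota_\#)$ with the precise defect measured by \ref{item2}.

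First I would record three elementary preliminaries. Since $K$ normalizes $H$, the conjugation action together with $[G_i,G_j]\subseteq G_{i+j}$ makes $(G_*\cap H)\rtimes(G_*\cap K)$ a genuine filtration on $H\rtimes K$. The multiplication map $\phi\colon H\rtimes K\to HK$, $(h,k)\mapsto hk$, is then a filtration-preserving surjection onto $L_*$ whose kernel is $G_*\cap(H\cap K)$, embedded by $k\mapsto(k^{-1},k)$; this gives a short exact sequence of filtrations. Moreover, for any subgroup $A\subseteq G$, the induced filtration $G_*\cap A$ embeds degreewise into $\Lie(G_*)$, since an element of $G_i\cap A$ lying in $G_{i+1}$ already lies in $G_{i+1}\cap A$; in particular $\Lie(M_*)\hookrightarrow\Lie(G_*)$.

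Next I would apply the (exact) Lie functor to this short exact sequence, using that it turns a semidirect product of filtrations into a semidirect product of Lie rings. This yields an exact sequence $0\to\Lie(G_*\cap(H\cap K))\to\Lie(G_*\cap H)\rtimes\Lie(G_*\cap K)\xrightarrow{\phi_\#}\Lie(L_*)\to 0$, where the first map is $\bar k\mapsto(-\bar k,\bar k)$ and the middle term has underlying graded group $\Lie(G_*\cap H)\oplus\Lie(G_*\cap K)$. The composite of $\phi_\#$ with $\iota_\#$ and the injection $j\colon\Lie(M_*)\hookrightarrow\Lie(G_*)$ is the addition map $\psi\colon(\bar a,\bar b)\mapsto\bar a+\bar b$. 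Since $j$ is injective, $\ker\psi=\phi_\#^{-1}(\ker\iota_\#)$, hence $\ker\iota_\#\cong\ker\psi/\ker\phi_\#$. Working inside $G_i/G_{i+1}$, one identifies $\ker\psi$ in degree $i$ with $\overline{G_i\cap H}\cap\overline{G_i\cap K}$ via $(\bar a,-\bar a)\mapsto\bar a$, under which $\ker\phi_\#$ corresponds exactly to $\overline{G_i\cap H\cap K}$. Thus $\ker\iota_\#$ is, in degree $i$, the quotient $(\overline{G_i\cap H}\cap\overline{G_i\cap K})/\overline{G_i\cap H\cap K}$, so \ref{item2} is precisely the assertion that $\iota_\#$ is injective.

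Finally I would show that $\iota_\#$ is injective if and only if $L_*=M_*$. One direction is immediate. For the converse, injectivity in degree $i$ means $L_i\cap M_{i+1}=L_{i+1}$; since $L_1=M_1=HK$, a short induction gives $L_{i+1}=L_i\cap M_{i+1}=M_i\cap M_{i+1}=M_{i+1}$, whence $L_*=M_*$. I expect the main obstacle to be the degreewise identification of $\ker\iota_\#$ in the third step: one must carry along both embeddings into $\Lie(G_*)$ and verify that $\ker\phi_\#$ matches $\overline{G_i\cap H\cap K}$ under the isomorphism $\ker\psi\cong\overline{G_i\cap H}\cap\overline{G_i\cap K}$. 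Carrying this out directly (rather than through the exact sequence) amounts to a small commutator computation showing that an element $hk\in L_i\cap G_{i+1}$ with $\bar h\in\overline{G_i\cap H\cap K}$ in fact lies in $(G_{i+1}\cap H)(G_{i+1}\cap K)$.
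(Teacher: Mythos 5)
Your proof is correct, but it takes a genuinely different route from the paper's. The paper argues by double contrapositive with bare hands: if (ii) fails it manufactures an explicit element $g=h^{-1}k\in G_{i+1}\cap(HK)$ that cannot be factored in $(G_{i+1}\cap H)(G_{i+1}\cap K)$, and if (i) fails it picks a bad factorization $g=hk$ with the index $i$ of $h,k$ \emph{maximal} and shows $\overline h=-\overline k$ violates (ii); the maximality trick is the whole content of that direction. You instead set $L_*=(G_*\cap H)(G_*\cap K)\subseteq M_*=G_*\cap(HK)$, apply the exact Lie functor to the short exact sequence of filtrations $G_*\cap(H\cap K)\hookrightarrow(G_*\cap H)\rtimes(G_*\cap K)\twoheadrightarrow L_*$, and identify $\ker(\iota_\#)$ degreewise with $\left(\Lie(G_*\cap H)\cap\Lie(G_*\cap K)\right)/\Lie\left(G_*\cap(H\cap K)\right)$, finishing with the standard fact that a filtration-inclusion with equal first terms inducing an injection on associated graded is an equality. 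All the steps check out (in particular the identification of $\ker\psi$ with $\overline{G_i\cap H}\cap\overline{G_i\cap K}$ and of $\ker\phi_\#$ with $\overline{G_i\cap H\cap K}$ under $(\bar a,-\bar a)\mapsto\bar a$, and the induction $L_{i+1}=L_i\cap M_{i+1}=M_i\cap M_{i+1}=M_{i+1}$). What your approach buys is a quantitative refinement -- an explicit isomorphism for the obstruction $\ker(\iota_\#)$, exactly parallel to the isomorphism \eqref{ker(i)} in the paper's proof of Theorem \ref{Andreadakis_for_Inn} -- and it reuses the semidirect-product machinery of \cref{par_LCS_of_HK}; what the paper's element-chasing buys is self-containedness and no reliance on exactness of the Lie functor. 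Your closing suggestion that one could ``carry this out directly'' via a commutator computation is essentially a sketch of the paper's actual proof.
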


\begin{rmq}
The case considered in \cite[Prop.~4.1]{Darne2} was exactly the case when $H \cap K = 1$. In this context, $H$ and $K$ were called \emph{$G_*$-disjoint} when they satisfied the equivalent conditions of the proposition.
\end{rmq}

\begin{proof}[Proof of Proposition \ref{Dec_induced_filtration}]

Suppose that \ref{item2} does not hold. Then there exists an element $x \in \Lie(G_* \cap H) \cap \Lie(G_* \cap K)\ -\ \Lie\left(G_* \cap (H \cap K)\right)$ for some $i \geq 1$. Then $x = \overline h = \overline k$ for some $h \in G_i \cap H$ and some $k \in G_i \cap K$. Since $\overline h = \overline k$ in $\Lie_i(G_*)$, the element $g = h^{-1}k$ is in $G_{i+1}$. It is also obviously in $HK$. However, we claim that $g \notin (G_{i+1} \cap H)(G_{i+1} \cap K)$. Indeed, if we could write $g$ as a product $h'k'$ with $h' \in G_{i+1} \cap H$ and  $k' \in G_{i+1} \cap K$, then we would have $h^{-1}k = h'k'$, whence $k k'^{-1} = hh' \in H \cap K$. And by construction, $\overline{hh'} = \overline h = x$ in $G_i/G_{i+1}$. But this would imply that $x \in \Lie\left(G_* \cap (H \cap K)\right)$, a contradiction. Thus $g$ must be a counter-example to \ref{item1}.

Conversely, suppose \ref{item1} false. Then there exists $g \in G_j \cap (HK)$, such that for all $(h,k) \in H \rtimes K$ satisfying $g = hk$, neither $h$ nor $k$ belongs to $G_j$ (if $h$ or $k$ belongs to $G_j$, so does the other one, since their product $g$ does). Then $h \equiv k^{-1} \not\equiv 1 \pmod{G_j}$. For all such $(h,k)$, there exists $i < j$ such that $h, k \in G_i - G_{i+1}$. Let us take $(h,k)$ such that this index $i$ is maximal. We show that the element $\overline h = - \overline k \in \Lie_i(G_*)$ gives a counter-example to the equality \ref{item2}. Indeed, it is clear that $\overline h = - \overline k$ belongs to $\Lie_i(G_* \cap H) \cap \Lie_i(G_* \cap K)$. Suppose now that this element would belong to $\Lie_i(G_* \cap (H \cap K))$. Then there would exist $x \in G_i \cap H \cap K$ such that $\overline h = - \overline k = \overline x$. This means that $\overline h - \overline x = - (\overline x + \overline k ) = 0$ or, equivalently: $\overline{hx^{-1}} = - \overline{xk} = 0$. But then $g = hk = (hx^{-1})(xk)$, with $hx^{-1}$ and  $xk$ in $G_{i+1}$, contradicting the maximality of $i$. Thus $\overline h$ cannot belong to $\Lie_i(G_* \cap (H \cap K))$, whence our claim.
\end{proof}

\subsection{Application to the Andreadakis problem}

We are now able to state our key theorem, which is an improvement on \cite[Th.~4.2]{Darne2}. We will apply it to the case when $G = IA_n$ and $G_* = \mathcal A_*$ is the Andreadakis filtration, but we still give a general statement.

\begin{theo}\label{key_theo}
Let $G_*$ be a filtration, and let $H$ and $K$ be subgroups of $G = G_1$ such that $[K,H] \subseteq [H,H]$. Suppose that in the Lie ring $\Lie(G_*)$, the intersection of the Lie subrings $\Lie(G_* \cap H)$ and $\Lie(G_* \cap K)$ is $\Lie\left(G_* \cap (H \cap K)\right)$. Then: 
\[
\begin{cases}
G_* \cap K = \Gamma_*(K) \\ G_* \cap H = \Gamma_*(H)
\end{cases} 
\Rightarrow\ \ G_* \cap (HK) = \Gamma_*(HK).\]
\end{theo}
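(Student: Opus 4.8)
The plan is to assemble the hypotheses through the structural results already established in the excerpt. The key objects are three filtrations on $HK$: the induced filtration $G_* \cap (HK)$, the lower central series $\Gamma_*(HK)$, and the intermediate product filtration $(G_* \cap H)(G_* \cap K)$. We always have the inclusions $\Gamma_*(HK) \subseteq G_* \cap (HK)$ (since $G_*$ restricts to a filtration on $HK$ and $\Gamma_*$ is minimal), and $(G_* \cap H)(G_* \cap K) \subseteq G_* \cap (HK)$. The goal $G_* \cap (HK) = \Gamma_*(HK)$ will follow by squeezing: I would show that the induced filtration is \emph{sandwiched} between $\Gamma_*(HK)$ and itself by routing through the product filtration.

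First I would invoke Proposition~\ref{Dec_induced_filtration}: the hypothesis on the intersection of Lie subrings is exactly condition \ref{item2}, so we immediately get condition \ref{item1}, namely
\[G_* \cap (HK) = (G_* \cap H)(G_* \cap K).\]
Next I would feed in the two equalities assumed in the implication, $G_* \cap H = \Gamma_*(H)$ and $G_* \cap K = \Gamma_*(K)$, to rewrite the right-hand side as $\Gamma_*(H)\Gamma_*(K)$. So the problem reduces to showing $\Gamma_*(H)\Gamma_*(K) = \Gamma_*(HK)$. At this point I would use the description of the lower central series of a product from \S\ref{par_LCS_of_HK}, which gives $\Gamma_i(HK) = \Gamma_i^K(H)\Gamma_i(K)$ for all $i$. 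Comparing the two product decompositions, it remains to compare $\Gamma_*^K(H)$ with $\Gamma_*(H)$.

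The crux is therefore the identification $\Gamma_*^K(H) = \Gamma_*(H)$, and this is where the hypothesis $[K,H] \subseteq [H,H]$ enters. The subtlety is that Proposition~\ref{lcs_of_adp}, which characterizes when $\Gamma_*^K(H) = \Gamma_*(H)$ via the condition $[K,H] \subseteq \Gamma_2(H)$, is stated for an abstract action of $K$ on $H$ by automorphisms, i.e.\ in the semidirect product $H \rtimes K$; here $K$ acts on $H$ by conjugation inside $G$, and $\Gamma_*^K(H)$ is by definition $\Gamma_*^{H \rtimes K}(H)$ computed in that semidirect product. Since the condition $[K,H] \subseteq [H,H] = \Gamma_2(H)$ holds by hypothesis (and conjugation in $G$ agrees with the semidirect-product action on $H$), Proposition~\ref{lcs_of_adp} applies and yields $\Gamma_*^K(H) = \Gamma_*(H)$. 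The main obstacle I anticipate is precisely keeping this bookkeeping honest: one must check that the conjugation action of $K$ on $H$ inside $G$ is genuinely the action used to form $\Gamma_*^K(H)$, and that the product decompositions line up index by index rather than merely as unfiltered subgroups.

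Chaining these together gives
\[G_* \cap (HK) = \Gamma_*(H)\Gamma_*(K) = \Gamma_*^K(H)\Gamma_*(K) = \Gamma_*(HK),\]
where the middle equality is $\Gamma_*(H) = \Gamma_*^K(H)$ and the outer equalities are Proposition~\ref{Dec_induced_filtration} plus the hypotheses, and the product-of-lower-central-series formula, respectively. Since the reverse inclusion $\Gamma_*(HK) \subseteq G_* \cap (HK)$ is automatic, this establishes the desired equality. I would expect the proof to be short once the right propositions are cited in the right order; the real work was already done in the preparatory subsections, and the theorem is essentially a recombination of Proposition~\ref{Dec_induced_filtration}, Proposition~\ref{lcs_of_adp}, and the product formula for $\Gamma_*(HK)$.
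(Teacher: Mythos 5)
Your proposal is correct and follows essentially the same route as the paper's own proof: Proposition~\ref{Dec_induced_filtration} for the decomposition of the induced filtration, the product formula $\Gamma_i(HK) = \Gamma_i^K(H)\Gamma_i(K)$ from \S\ref{par_LCS_of_HK}, and Proposition~\ref{lcs_of_adp} (via $[K,H] \subseteq [H,H]$) to identify $\Gamma_*^K(H)$ with $\Gamma_*(H)$. The only detail the paper makes explicit that you leave implicit is that $[K,H] \subseteq [H,H] \subseteq H$ is what guarantees $K$ normalizes $H$ in the first place, so that the conjugation action and the product $HK$ make sense.
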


Recall that in the case when $G = IA_n$ and $G_* = \mathcal A_*$ is the Andreadakis filtration, the Lie ring $\Lie(\mathcal A_*)$ embeds into the Lie ring $\Der(\mathfrak L_n)$ of derivations of the free Lie ring, \emph{via} the Johnson morphism $\tau$. As a consequence, the hypothesis about Lie subrings of $\Lie(\mathcal A_*)$ can be checked there.

\begin{proof}[Proof of Theorem \ref{key_theo}]
Remark that $[K,H] \subseteq [H,H]$ implies in particular that $K$ normalizes $H$. Thus, we can apply the results of \cref{par_LCS_of_HK}. Since $[K,H] \subseteq [H,H]$ is exactly the condition needed for $\Gamma_*^K(H)$ to be equal to $\Gamma_*(H)$ (Prop.~\ref{lcs_of_adp}), we get:
\[\Gamma_*(HK) = \Gamma_*(H)\Gamma_*(K).\]
Then we use Proposition \ref{Dec_induced_filtration} to get:
\[G_* \cap (HK) = (G_* \cap H)(G_* \cap K),\]
whence the result.
\end{proof}

\section{Applications}

The subgroup $H = \Inn(F_n) \cong F_n$ of $IA_n$ is an ideal candidate for applying our key result (Th.~\ref{key_theo}). We begin by spelling out the consequences of Th.~\ref{key_theo} in this particular case (Cor.~\ref{cor_to_key_theo}). It turns out that for each of the subgroups $K$ which were shown to satisfy the Andreadakis equality in \cite{Darne2}, we can apply Cor.~\ref{cor_to_key_theo} to show that $HK$ does too. Moreover, two of the subgroups so obtained had already been considered in the literature. These are the group of \emph{partial inner automorphisms} of \cite{Bardakov-Neshchadim}, and the subgroup $P_{n+1}^*$ of $IA_n$ presented in the introduction, which was first studied in \cite{Magnus}.

\subsection{Adding inner automorphisms}

We are going to apply Theorem~\ref{key_theo} with $H = \Inn(F_n)$ and $G = IA_n$ endowed with the Andreadakis filtration $G_* = \mathcal A_*$, for three different subgroups $K$ of $IA_n$. We record in Corollary~\ref{cor_to_key_theo} below the consequences of Theorem~\ref{key_theo} in this context.

\begin{cor}[to Th.~\ref{key_theo}]\label{cor_to_key_theo}
Let $K$ be a subgroup of $IA_n$, and $\tau: \Lie(K) \rightarrow \Der(\mathfrak L_n)$ be the corresponding Johnson morphism. Suppose that $K$ satisfies the Andreadakis equality, which means that $\tau$ is injective. Suppose, moreover, that every element of $\tau(\Lie(K)) \cap \ad(\mathfrak L_n)$ comes from an element of $K \cap \Inn(F_n)$ (precisely, it equals $\tau(\overline x)$, for some $x \in K \cap \Inn(F_n)$). Then $\Inn(F_n)K$ satisfies the Andreadakis equality.
\end{cor}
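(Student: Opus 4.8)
The plan is to apply Theorem~\ref{key_theo} directly, with $G = IA_n$ endowed with the Andreadakis filtration $G_* = \mathcal A_*$, with $H = \Inn(F_n) \cong F_n$, and with the given subgroup $K$. First I would verify the structural hypothesis $[K,H] \subseteq [H,H]$. Since $H = \Inn(F_n)$ is normal in $\Aut(F_n)$ (and in particular in $IA_n$), it is automatically normalized by $K$. To get the stronger condition, I would use that $H = \Inn(F_n) \cong F_n$, so that $H^{ab} \cong F_n^{ab}$, and check that $K$ acts trivially on $H^{ab}$. Concretely, conjugation sends $c_w$ to $c_{\beta(w)}$ for $\beta \in K$, and since $K \subseteq IA_n$ acts trivially on $F_n^{ab}$, the induced action on $H^{ab}$ is trivial; by the equivalent conditions of Prop.~\ref{lcs_of_adp}, this is exactly $[K,H] \subseteq [H,H]$.

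Next I would verify the Lie-theoretic hypothesis of Theorem~\ref{key_theo}, namely that inside $\Lie(\mathcal A_*)$ we have $\Lie(\mathcal A_* \cap H) \cap \Lie(\mathcal A_* \cap K) = \Lie\bigl(\mathcal A_* \cap (H \cap K)\bigr)$. Here is where the two hypotheses of the corollary enter. The inclusion $\supseteq$ is automatic. For $\subseteq$, I would work inside $\Der(\mathfrak L_n)$ via the injective Johnson morphism $\tau$. By Cor.~\ref{Andreadakis_for_Inn(Fn)}, $H = \Inn(F_n)$ satisfies the Andreadakis equality, so $\mathcal A_* \cap H = \Gamma_*(H)$ and $\tau$ identifies $\Lie(\mathcal A_* \cap H)$ with $\ad(\mathfrak L_n) \subseteq \Der(\mathfrak L_n)$ (this is the content of the ``yet another proof'' of Cor.~\ref{Andreadakis_for_Inn(Fn)}). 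Similarly, the injectivity hypothesis on $\tau|_{\Lie(K)}$ means $\tau$ identifies $\Lie(\mathcal A_* \cap K)$ with $\tau(\Lie(K))$. Thus an element of the intersection on the left is precisely an element of $\tau(\Lie(K)) \cap \ad(\mathfrak L_n)$, and the second hypothesis of the corollary says exactly that such an element equals $\tau(\overline x)$ for some $x \in K \cap \Inn(F_n) = H \cap K$, i.e.\ that it lies in $\tau\bigl(\Lie(\mathcal A_* \cap (H \cap K))\bigr) = \Lie\bigl(\mathcal A_* \cap (H \cap K)\bigr)$. This gives the desired equality.

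With both hypotheses of Theorem~\ref{key_theo} verified, I would invoke the theorem: the assumptions $\mathcal A_* \cap K = \Gamma_*(K)$ (the Andreadakis equality for $K$) and $\mathcal A_* \cap H = \Gamma_*(H)$ (Cor.~\ref{Andreadakis_for_Inn(Fn)}) together imply $\mathcal A_* \cap (HK) = \Gamma_*(HK)$, which is precisely the Andreadakis equality for $F_nK = HK$.

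The main obstacle is the careful bookkeeping in the identification step of the middle paragraph: one must be scrupulous about the fact that $\mathcal A_* \cap (H \cap K)$ need not in general equal $\Gamma_*(H \cap K)$, so the equality to verify is genuinely an equality of graded pieces inside $\Lie(\mathcal A_*)$ rather than a statement one can read off abstractly. The translation of ``comes from an element of $K \cap \Inn(F_n)$'' into the equality $\tau(\Lie(K)) \cap \ad(\mathfrak L_n) = \tau\bigl(\Lie(\mathcal A_* \cap (H \cap K))\bigr)$ requires knowing that every element of $\Lie(\mathcal A_* \cap (H \cap K))$ is realized by a genuine commutator expression, which is where the exactness of the Lie functor and the explicit formula \eqref{tau(c_g)} identifying $\tau(\overline{c_g})$ with $\ad_{\overline g}$ do the work. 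Once this dictionary is set up, the rest is a routine application of the machinery already established.
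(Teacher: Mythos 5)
Your proposal is correct and follows essentially the same route as the paper: apply Theorem~\ref{key_theo} with $H = \Inn(F_n)$ and $G_* = \mathcal A_*$, check $[K,H]\subseteq[H,H]$ via triviality of the $K$-action on $H^{ab}\cong F_n^{ab}$, and verify the Lie-intersection hypothesis by embedding $\Lie(\mathcal A_*)$ into $\Der(\mathfrak L_n)$ so that $\Lie(\mathcal A_*\cap H)$ becomes $\ad(\mathfrak L_n)$, $\Lie(\mathcal A_*\cap K)$ becomes $\tau(\Lie(K))$, and the corollary's second hypothesis supplies exactly the needed equality with $\Lie\left(\mathcal A_*\cap(H\cap K)\right)$. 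Your closing remark about the care needed in identifying $\Lie\left(\mathcal A_*\cap(H\cap K)\right)$ with the set of $\tau(\overline x)$ for $x\in K\cap\Inn(F_n)$ is a fair gloss on a point the paper treats as immediate.
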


\begin{proof}
We apply Theorem~\ref{key_theo} to the subgroups $H = \Inn(F_n)$ and $K$ of $G = IA_n$ endowed with the Andreadakis filtration $G_* = \mathcal A_*$. 

\begin{itemize}[itemsep=-3pt,topsep=3pt]
\item The hypothesis $[K,H] \subseteq [H,H]$ comes from the fact that $IA_n$ (whence $K$) normalizes $\Inn(F_n)$ and acts trivially on $F_n^{ab}$.
\item Cor.~\ref{Andreadakis_for_Inn(Fn)} says that $H = \Inn(F_n)$ satisfies the Andreadakis equality.
\item By hypothesis, $K$ does too.
\end{itemize}
Thus, we are left with proving that the following inclusion (which is true in general) is in fact an equality in $\Lie(\mathcal A_*)$:
\[\Lie(\mathcal A_* \cap \Inn(F_n)) \cap \Lie(\mathcal A_* \cap K) \supseteq \Lie\left(\mathcal A_* \cap (\Inn(F_n) \cap K)\right).\]
If we embed $\Lie(\mathcal A_*)$ into $\Der(\mathfrak L_n)$ \emph{via} the Johnson morphism, the Lie ring $\Lie(\mathcal A_* \cap \Inn(F_n))$ identifies with the Lie subring of inner derivations $\ad(\mathfrak L_n)$ (see \cref{par_Fn}). Moreover, since $K$ satisfies the Andreadakis equality, the Lie ring $\Lie(\mathcal A_* \cap K)$ identifies with $\tau(\Lie(K))$. As for $\Lie\left(\mathcal A_* \cap (\Inn(F_n) \cap K)\right)$, it is exactly the set of $\tau(\overline x)$, for $x \in K \cap \Inn(F_n)$, whence the result.
\end{proof}

\subsection{Triangular automorphisms}

We recall the definition of the subgroup $IA_n^+$ of triangular automorphisms \cite[Def.~5.1]{Darne2}:
\begin{defi}\label{def_triangulaires}
Fix $(x_1,..., x_n)$ an ordered basis of $F_n$. The subgroup $IA_n^+$ of $IA_n$ consists of \emph{triangular automorphisms}, \emph{i.e.\ } automorphisms $\varphi$ acting as:
\[\varphi: x_i \longmapsto (x_i^{w_i})\gamma_i,\]
where $w_i \in \langle x_j \rangle_{j < i} \cong F_{i-1}$ et $\gamma_i \in \Gamma_2(F_{i-1})$.
\end{defi}

Recall that $IA_n^+$ satisfies the Andreadakis equality. This statement first appeared as \cite[Th.~1]{Satoh_triangulaire}, and a shorter proof was given in \cite{Darne2}.

\begin{theo}\label{Andreadakis_for_FnIAn+}
The subgroup $\Inn(F_n) IA_n^+$ of $IA_n$ satisfies the Andreadakis equality.
\end{theo}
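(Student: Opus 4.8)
The plan is to apply Corollary~\ref{cor_to_key_theo} directly with $K = IA_n^+$. The first two hypotheses of that corollary are already in hand: $IA_n^+$ is a subgroup of $IA_n$, and it satisfies the Andreadakis equality by the result recalled just before the statement (from \cite{Satoh_triangulaire}, reproved in \cite{Darne2}), so its Johnson morphism $\tau : \Lie(IA_n^+) \rightarrow \Der(\mathfrak L_n)$ is injective. The only thing left to verify is the crucial disjointness-type condition: every element of $\tau(\Lie(IA_n^+)) \cap \ad(\mathfrak L_n)$ must come from an element of $IA_n^+ \cap \Inn(F_n)$.

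To check this, I would first compute $IA_n^+ \cap \Inn(F_n)$ at the group level. A triangular automorphism sends $x_i \mapsto (x_i^{w_i})\gamma_i$ with $w_i \in F_{i-1}$ and $\gamma_i \in \Gamma_2(F_{i-1})$; in particular it fixes $x_1$ (since $w_1, \gamma_1$ are trivial). An inner automorphism $c_w$ fixes $x_1$ only if $w$ centralizes $x_1$, and since $x_1$ is primitive, Lemma~\ref{centralizers_in_Fn} forces $w \in \langle x_1 \rangle$. But then $c_w$ conjugates every $x_i$ by a power of $x_1$, and demanding that this be triangular (each $x_i^{w_i}$ with $w_i \in \langle x_j\rangle_{j<i}$) is automatically satisfied since $x_1 \in F_{i-1}$ for $i \geq 2$; moreover $c_w \in IA_n$ requires the conjugating element to be a power of $x_1$, which it is. So $IA_n^+ \cap \Inn(F_n) = \langle c_{x_1} \rangle \cong \Z$, giving at the Lie level the line $\Z \cdot \ad_{X_1}$ inside $\ad(\mathfrak L_n)$.

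The heart of the matter, and the step I expect to be the main obstacle, is then the Lie-theoretic intersection computation: showing that $\tau(\Lie(IA_n^+)) \cap \ad(\mathfrak L_n)$ is exactly $\Z \cdot \ad_{X_1}$. An inner derivation $\ad_X$ (for $X \in \mathfrak L_n$) lies in the image of $\Lie(IA_n^+)$ precisely when it is a triangular derivation, i.e.\ it sends each $X_i$ into the Lie subring generated by $X_1, \dots, X_i$, with the appropriate degree/bracket constraints reflecting the conditions on $w_i$ and $\gamma_i$ — in particular $\ad_X(X_1) = [X_1, X] $ must land in $\Z \cdot X_1$ in degree one and stay ``below $X_1$'' in the triangular sense. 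Writing $X = \sum_d X^{(d)}$ in homogeneous degrees, the degree-one analysis via Lemma~\ref{centralizers_in_Ln} pins the linear part to a multiple of $X_1$; the work is to show the higher-degree parts of $X$ must vanish, because $[X_1, X^{(d)}]$ being a triangular derivation value forces $X^{(d)}$ into the centralizer of $X_1$, which Lemma~\ref{centralizers_in_Ln} identifies as $\Z \cdot X_1$ again, leaving no room in higher degrees. Once this intersection is identified with $\Z \cdot \ad_{X_1} = \tau(\overline{c_{x_1}})$, every element of the intersection visibly comes from $c_{x_1} \in IA_n^+ \cap \Inn(F_n)$, so the hypothesis of Corollary~\ref{cor_to_key_theo} holds and $F_n IA_n^+$ satisfies the Andreadakis equality.
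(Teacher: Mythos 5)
Your proposal is correct and follows essentially the same route as the paper: apply Corollary~\ref{cor_to_key_theo} with $K = IA_n^+$, observe that triangular automorphisms fix $x_1$ so their Johnson images kill $X_1$, and use Lemma~\ref{centralizers_in_Ln} to conclude that the only inner derivations vanishing on $X_1$ are the multiples of $\ad_{X_1} = \tau(\overline{c_{x_1}})$. The extra group-level computation of $IA_n^+ \cap \Inn(F_n)$ and the degree-by-degree analysis are harmless but unnecessary, since the centralizer lemma applies to all of $\mathfrak L_n$ at once and the corollary only needs each element of the Lie-level intersection to come from \emph{some} element of the group-level one.
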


\begin{proof}
Remark that a triangular automorphism $\varphi$ has to fix $x_1$. As a consequence, the derivation $\tau(\overline \varphi)$ sends $x_1$ to $\overline{\varphi(x_1)x_1^{-1}} = 0$. The only inner derivations vanishing on $x_1$ are the multiples of $\ad(x_1)$ (Lemma~\ref{centralizers_in_Ln}). However, $\ad(x_1)$ is the image by $\tau$ of $\overline{c_{x_1}}$, and $c_{x_1}$ is a triangular automorphism. Thus, the hypotheses of  Corollary~\ref{cor_to_key_theo} are satisfied, whence the desired conclusion.
\end{proof}

\subsection{The triangular McCool group}

The triangular McCool $P \Sigma_n^+$ group was first considered in \cite{CPVW}, where its Lie algebra was computed. It was shown to satisfy the Andreadakis equality in \cite{Darne2}.

The subgroup $\Inn(F_n) P \Sigma_n^+$ of $IA_n$ (or, more precisely, $\Inn(F_n) P \Sigma_n^-$, which is obtained from the definition of $\Inn(F_n) P \Sigma_n^+$ with the opposite order on the generators) is exactly the \emph{partial inner automorphism group} $I_n$ defined and studied in \cite{Bardakov-Neshchadim}. Indeed, $I_n$ is defined as the subgroup generated by the automorphism $c_{ki}$ for $k \geq i$, defined by:
\[c_{ki}: x_j \longmapsto 
\begin{cases}
x_j^{x_i} &\text{if }\ j \leq k, \\
x_j &\text{else.}
\end{cases}\]
Equivalently, it is generated by the $c_{ni} = c_{x_i^{-1}}$ (which generate $\Inn(F_n)$), together with the $c_{ki}c_{k-1,i}^{-1} = \chi_{ki}$ for $k > i$, which generate $P \Sigma_n^-$.

\begin{theo}\label{Andreadakis_for_FnMcCool}
The subgroup $I_n = \Inn(F_n)P \Sigma_n^+$ of $IA_n$ satisfies the Andreadakis equality.
\end{theo}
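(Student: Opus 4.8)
The plan is to apply Corollary~\ref{cor_to_key_theo} to the subgroup $K = P\Sigma_n^+$, exactly as was done for $IA_n^+$ in the proof of Theorem~\ref{Andreadakis_for_FnIAn+}. The three hypotheses of the corollary that must be checked are: first, that $K = P\Sigma_n^+$ satisfies the Andreadakis equality; second, the injectivity of the associated Johnson morphism $\tau$ (which is equivalent to the first); and third, the key intersection condition, namely that every element of $\tau(\Lie(P\Sigma_n^+)) \cap \ad(\mathfrak L_n)$ arises as $\tau(\overline x)$ for some $x \in P\Sigma_n^+ \cap \Inn(F_n)$. The first two are immediate from the cited result that $P\Sigma_n^+$ satisfies the Andreadakis equality, so the entire content of the proof reduces to verifying the third, geometric/combinatorial condition about which inner derivations lie in the image of the triangular McCool Lie ring.

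First I would make the intersection condition concrete. Recall that $P\Sigma_n^-$ is generated by the $\chi_{ki}$ for $k > i$ (with the opposite order on generators), where each $\chi_{ki}$ conjugates $x_k$ by $x_i$ and fixes the remaining generators; these are basis-conjugating automorphisms with a lower-triangular support pattern. I would compute the image $\tau(\overline{\chi_{ki}})$ and, more generally, describe $\tau(\Lie(P\Sigma_n^-))$ as a Lie subring of the tangential derivations $\Der_t(\mathfrak L_n)$: each generator sends $x_k \mapsto [x_k, x_i]$ (up to sign conventions) with $k > i$ and kills the other generators, so degree-one elements are spanned by the $\ad$-type derivations supported on a single strand. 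The task is then to determine exactly which inner derivations $\ad_X$ (for $X \in \mathfrak L_n$) lie in $\tau(\Lie(P\Sigma_n^-))$, and to show that each such $X$ can be realized by an honest inner automorphism lying inside $P\Sigma_n^-$.

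The main obstacle, and the heart of the proof, will be this realizability of the intersection. The analogue in the $IA_n^+$ case was clean because triangular automorphisms fix $x_1$, forcing $\tau(\overline\varphi)$ to vanish on $x_1$, and the only inner derivations vanishing on $x_1$ are multiples of $\ad(x_1)$ by Lemma~\ref{centralizers_in_Ln}; since $c_{x_1}$ is triangular, the single realizing element was easy to exhibit. For $P\Sigma_n^-$ the structural constraint is different: these are \emph{basis-conjugating} automorphisms, so $\tau(\overline\chi)$ preserves the conjugacy class of each generator at the associated-graded level, meaning every element of $\tau(\Lie(P\Sigma_n^-))$ is a tangential derivation sending $X_i \mapsto [X_i, w_i]$. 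An inner derivation $\ad_Z = (X_i \mapsto [X_i, Z])$ is automatically tangential with all $w_i = Z$; the question is which $Z$ (and which lower-triangular support conditions coming from $P\Sigma_n^-$) can occur, and to match these against $P\Sigma_n^- \cap \Inn(F_n)$, which consists of those inner automorphisms $c_w$ that happen to be basis-conjugating with the correct triangularity. I expect the cleanest route is to identify $P\Sigma_n^- \cap \Inn(F_n)$ explicitly — one naturally expects it to be generated by $c_{x_i}$ or by products giving the boundary-type elements — and then show that the degree-by-degree image $\tau(\Lie(P\Sigma_n^-)) \cap \ad(\mathfrak L_n)$ is precisely the span of the classes of these elements, possibly invoking the known computation of the Lie algebra of $P\Sigma_n^+$ from \cite{CPVW} to control the intersection in each degree.

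Once the intersection condition is established, the conclusion is immediate: Corollary~\ref{cor_to_key_theo} then yields that $F_n P\Sigma_n^+ = I_n$ satisfies the Andreadakis equality, and the identification of $F_n P\Sigma_n^-$ with the partial inner automorphism group $I_n$ of \cite{Bardakov-Neshchadim} is already recorded in the discussion preceding the statement. The only genuinely new work is the Lie-theoretic intersection computation; everything else is assembling previously established inputs through the key theorem.
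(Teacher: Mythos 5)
Your overall skeleton is the right one --- apply Corollary~\ref{cor_to_key_theo} to $K = P\Sigma_n^+$ --- but the proposal stalls exactly at the step you yourself identify as ``the heart of the proof'': the verification that every element of $\tau(\Lie(P\Sigma_n^+)) \cap \ad(\mathfrak L_n)$ is realized by an element of $P\Sigma_n^+ \cap \Inn(F_n)$. You sketch a plan (compute $\tau(\Lie(P\Sigma_n^-))$ explicitly, determine $P\Sigma_n^- \cap \Inn(F_n)$, control the intersection degree by degree using the computation of the Lie ring from \cite{CPVW}) but do not carry it out, so as written this is a research programme rather than a proof of the only nontrivial hypothesis.

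The gap comes from a missed containment. You write that for $P\Sigma_n^-$ ``the structural constraint is different'' from the triangular case, but in fact $P\Sigma_n^+ \subseteq IA_n^+$: in Definition~\ref{def_triangulaires} the word $w_1$ must lie in $\langle x_j\rangle_{j<1} = 1$, so every element of $P\Sigma_n^+$ fixes $x_1$ (and dually every element of $P\Sigma_n^-$ fixes $x_n$). Hence the argument of Theorem~\ref{Andreadakis_for_FnIAn+} applies verbatim: every derivation in $\tau(\Lie(P\Sigma_n^+))$ kills $X_1$, so by Lemma~\ref{centralizers_in_Ln} the intersection with $\ad(\mathfrak L_n)$ is contained in $\Z\cdot\ad(X_1)$, and $\ad(X_1) = \tau(\overline{c_{x_1}})$ with $c_{x_1}$ lying not only in $IA_n^+$ but in $P\Sigma_n^+$ (it fixes $x_1$ and conjugates each $x_j$, $j\geq 2$, by $x_1^{-1} \in \langle x_i\rangle_{i<j}$). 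That single remark closes the hypothesis of Corollary~\ref{cor_to_key_theo} and is all the paper's proof consists of; no explicit description of $\tau(\Lie(P\Sigma_n^+))$ or of its full intersection with the inner derivations is needed, and the heavier computation you propose, while presumably feasible, is not completed and not necessary.
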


\begin{proof}
The proof is exactly the same as the proof of Theorem \ref{Andreadakis_for_FnIAn+}, using the fact that $P \Sigma_n^+$ satisfies the Andreadakis equality \cite[Cor.~5.5]{Darne2}, and remarking that $c_{x_1}$ is not only in $IA_n^+$, but in $P \Sigma_n^+$.
\end{proof}

\subsection{The pure mapping class group of the punctured sphere}\label{par_Andreadakis_for_Pn*}

The Artin action of $P_n$ on $F_n$ is by group automorphisms fixing $\partial_n = x_1 \cdots x_n$. As a consequence, it induces an action of $P_n$ on the quotient $F_n/\partial_n \cong F_{n-1}$ (which is free on $x_1, ..., x_{n-1}$ since $(x_1, ..., x_{n-1}, \partial_n)$ is a basis of $F_n$).
This action is not faithful anymore, since the generator $c_{\partial_n}$ of $\mathcal Z (P_n)$ (see Prop.~\ref{Z(Pn)}) acts trivially modulo $\partial_n$. However, by a classical theorem of \cite{Magnus}, of which we give a simple proof in our appendix (\cref{par_action_Pn*}), the kernel is not bigger that $\mathcal Z (P_n)$ : the above induces a faithful action of $P_n/\mathcal Z(P_n) =: P_n^*$ on $F_{n-1}$.

The latter is exactly the action on the free group of the (pure, based) mapping class group of the punctured sphere  presented in the introduction, described in a purely algebraic fashion. Indeed, $P_n$ identifies with the boundary-fixing mapping class group of the $n$-punctured disc, and the Artin action corresponds to the canonical action on the fundamental group of this space. Then, collapsing the boundary to a point (which we choose as basepoint) to get an action on the $n$-punctured sphere corresponds exactly to taking the quotient by $\partial_n = x_1 \cdots x_n$, at the level of fundamental groups.

From the description of Artin's action, we see that automorphisms of $F_{n-1}$ obtained from the above action must send each generator $x_i$ to one of its conjugates and, since $x_n^{-1} \equiv x_1 \cdots x_{n-1} \pmod{\partial_n}$, they must also send the boundary element $\partial_{n-1} = x_1 \cdots x_{n-1}$ to one of its conjugates. This means exactly that this action is \emph{via} elements of $\Inn(F_{n-1})P_{n-1}$:

\begin{lem}\label{description_FnPn}
The subgroup $\Inn(F_n)P_n$ of $IA_n$ is the subgroup of all automorphisms of $F_n$ sending each generator $x_i$ to one of its conjugates, and sending the boundary element $\partial_n = x_1 \cdots x_n$ to one of its conjugates. 
\end{lem}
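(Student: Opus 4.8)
The claim is a concrete description of the subgroup $F_nP_n = \Inn(F_n)P_n$ of $IA_n$: it consists of exactly those automorphisms of $F_n$ that send each generator $x_i$ to a conjugate of itself \emph{and} send $\partial_n = x_1\cdots x_n$ to a conjugate of itself. Let me plan the proof.
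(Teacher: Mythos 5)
There is no proof here: after restating the claim, your proposal stops at ``Let me plan the proof'' and never supplies an argument. Everything of substance is missing. Concretely, you would need two things. First, the easy inclusion: the described set is a subgroup of $\Aut(F_n)$ (it is an intersection of stabilizers for the action of $\Aut(F_n)$ on conjugacy classes of $F_n$), and it contains both $\Inn(F_n)$ (clear) and $P_n$ (by Artin's characterization $P_n \cong \Aut^{\partial}_C(F_n)$, braid automorphisms send each $x_i$ to a conjugate and fix $\partial_n$ on the nose). Second, and this is the real content, the reverse inclusion: if $\sigma$ sends each $x_i$ to a conjugate and sends $\partial_n$ to a conjugate $\partial_n^w$, you must produce a factorization $\sigma = c_{w^{-1}}\beta$ with $\beta \in P_n$. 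The paper does this by observing that $c_w \circ \sigma$ still permutes the conjugacy class of each $x_i$ but now fixes $\partial_n$ exactly, so it lies in $\Aut^{\partial}_C(F_n) = P_n$; hence $\sigma \in \Inn(F_n)P_n$. Without this step the lemma is not established, so as it stands the proposal cannot be accepted.
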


\begin{proof}
The conditions of the lemma obviously describe a subgroup $G$ of $\Aut(F_n)$ (it is an intersection of stabilizers for the action of $\Aut(F_n)$ on the set of conjugacy classes of $F_n$), and it contains $P_n$ and $\Inn(F_n)$. Now let $\sigma \in G$. Then $\sigma(\partial_n) = \partial_n^w$ for some $w$, hence $c_w \circ \sigma$ fixes $\partial_n$ (where $c_w: x \mapsto {}^w\!x$ is the inner automorphism associated to $w$). As a consequence, $c_w \sigma \in P_n$, whence $\sigma \in \Inn(F_n)P_n$, and our claim.
\end{proof}

In fact, the proof of Magnus' theorem (Th~\ref{Pn*_acts_faithfully}) that we give consists in showing that the above action of $P_n$ on $F_{n-1}$ induces an isomorphism between $P_n^*$ and $\Inn(F_{n-1})P_{n-1}$. 

We now give the main application of our key result (Th.~\ref{key_theo}):

\begin{theo}\label{Andreadakis_for_FnPn}
The subgroup $P_{n+1}^* \cong \Inn(F_n)P_n$ of $IA_n$ satisfies the Andreadakis equality.
\end{theo}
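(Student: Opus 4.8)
The plan is to deduce the statement directly from Corollary~\ref{cor_to_key_theo}, applied to the subgroup $K = P_n$ of $IA_n$ (with $P_n$ identified with $\Aut^\partial_C(F_n)$ acting on $F_n$). Since $F_nP_n \cong P_{n+1}^*$ by Lemma~\ref{FnPn_and_braids}, it suffices to prove that $F_nP_n$ satisfies the Andreadakis equality. Corollary~\ref{cor_to_key_theo} reduces this to two verifications: first, that $P_n$ itself satisfies the Andreadakis equality; and second, that every derivation lying in $\tau(\Lie(P_n)) \cap \ad(\mathfrak L_n)$ is of the form $\tau(\overline x)$ for some $x \in P_n \cap \Inn(F_n)$.

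The first point is precisely Theorem~\ref{Andreadakis_for_Pn}. For the second, I would identify $\tau(\Lie(P_n))$ with the Drinfeld--Kohno Lie ring $DK_n$, viewed as a Lie subring of $\Der_t^\partial(\mathfrak L_n)$. Then the intersection I must understand satisfies
\[
\tau(\Lie(P_n)) \cap \ad(\mathfrak L_n) \;=\; DK_n \cap \ad(\mathfrak L_n) \;\subseteq\; \Der_t^\partial(\mathfrak L_n) \cap \ad(\mathfrak L_n),
\]
and the right-hand side is, by Lemma~\ref{DKn_cap_Ln}, the cyclic group $\Z \cdot \ad_{\overline\partial_n}$. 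The key point allowing the inclusion to be an equality is that $\ad_{\overline\partial_n}$ is not merely braid-like but an actual braid derivation, i.e.\ it already lies in $DK_n$ (it has degree one; cf.\ the remark opening the proof of Prop.~\ref{Z(DKn)}). Hence the whole cyclic group $\Z\cdot\ad_{\overline\partial_n}$ lies in $DK_n$, and the intersection is exactly $\Z\cdot\ad_{\overline\partial_n}$.

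It then remains to exhibit each such derivation as coming from $P_n \cap \Inn(F_n)$. By the explicit formula~\eqref{tau(c_g)}, one has $\ad_{\overline\partial_n} = \tau(\overline{c_{\partial_n}})$, and $c_{\partial_n}$ generates $P_n \cap \Inn(F_n)$ by Lemma~\ref{Pn_cap_Fn}. Because $\partial_n$ is a primitive element of $F_n$, its class $\overline\partial_n$ sits in degree one of $\mathfrak L_n$, so that $\tau(\overline{c_{\partial_n^k}}) = \ad_{\overline{\partial_n^k}} = k\,\ad_{\overline\partial_n}$, with $c_{\partial_n^k} \in P_n \cap \Inn(F_n)$ for every $k \in \Z$. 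This verifies the hypothesis of Corollary~\ref{cor_to_key_theo}, which yields the Andreadakis equality for $F_nP_n$, hence for $P_{n+1}^*$.

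The genuine content of the argument is thus already packaged in the preceding center computations: Lemma~\ref{DKn_cap_Ln} and, crucially, the fact that $\ad_{\overline\partial_n}$ is a braid derivation rather than merely a braid-like one. Given these, the application is clean, and the only delicate point is to check that the \emph{entire} intersection --- not just its generator --- is accounted for by elements of $P_n \cap \Inn(F_n)$; this is exactly what the degree-one position of $\overline\partial_n$ guarantees, via the passage from $c_{\partial_n}$ to its powers.
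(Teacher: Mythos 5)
Your proposal is correct and follows essentially the same route as the paper: apply Corollary~\ref{cor_to_key_theo} to $K = P_n$, invoke Theorem~\ref{Andreadakis_for_Pn} for the first hypothesis, and use Lemma~\ref{DKn_cap_Ln} together with the identification $\ad_{\overline\partial_n} = \tau(\overline{c_{\partial_n}})$ for the second. Your additional remark that the full cyclic group $\Z\cdot\ad_{\overline\partial_n}$ is realized by the powers $c_{\partial_n^k}$ makes explicit a point the paper leaves implicit, but it is not a different argument.
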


\begin{proof}
We apply Corollary~\ref{cor_to_key_theo} to the subgroup $K = P_n$ of $IA_n$. The Andreadakis equality holds for $P_n$: this was proved in \cite[Th.~6.2]{Darne2}, recalled as Th.~\ref{Andreadakis_for_Pn} above. We need to check the other hypothesis. Recall that $\Lie(P_n)$, which is the Drinfeld-Kohno Lie ring $DK_n$, is identified to a Lie subring of $\Der^\partial_t(\mathfrak L_n)$ as in \cref{par_Z(DKn)}. Moreover, the intersection of $\Der^\partial_t(\mathfrak L_n)$ with $\ad(\mathfrak L_n)$ is (linearly) generated by $\ad_{\overline \partial_n}$ (Lemma \ref{DKn_cap_Ln}). Now, $\ad_{\overline \partial_n}$ is the image by $\tau$ of $\overline{c_{\partial_n}}$, which is a braid automorphism, whence our conclusion.
\end{proof}

\begin{rmq}
This does not depend of an identification of $P_{n+1}^*$ with $\Inn(F_n)P_n$ : one can choose the more algebraic isomorphism of Lemma \ref{FnPn_and_braids}, or the more geometric (and arguably more interesting) isomorphism given by Magnus' theorem (Th~\ref{Pn*_acts_faithfully}). The latter allows us to reformulate our result in more geometric terms : the Johnson kernels associated with the action of $P_n^*$ on the fundamental group of the $n$-punctured sphere are exactly the terms of the lower central series of $P_n^*$. In other words, the Milnor invariants associated with the Magnus action of $B_{n+1}^*$ on $F_n$ distinguish elements of $B_{n+1}^*$ up to elements of the lower central series of $P_{n+1}^*$, exactly like in the classical case (see Remark \ref{rmq_on_Milnor_inv}).
\end{rmq}

\clearpage

\appendix

\section{Appendix: Braid and braid-like derivations}\label{section_braid_derivations}

Recall (from \cref{reminders_DK}) that the Artin action of $P_n$ on $F_n$ induces an action of $\Gamma_*(P_n)$ on $\Gamma_*(F_n)$ and thus an action of the Drinfeld-Kohno Lie ring $\Lie(P_n) = DK_n$ on the free Lie ring $\Lie(F_n) = \mathfrak L_n$. Moreover, the Johnson morphism encoding the latter is in fact an injection:
\[\tau: DK_n \hookrightarrow \Der(\mathfrak L_n).\]
We identify $DK_n$ with its image in $\Der(\mathfrak L_n)$, and its elements are called \emph{braid derivations}. It is easy to see that this image is contained in the Lie subring $\Der_t^{\partial}(\mathfrak L_n)$ of \emph{braid-like derivations} (see definition \ref{defi_braid_der}). The goal of the present section is to compare these two Lie subrings of $\Der(\mathfrak L_n)$. 

Most of the results in this appendix are not knew, and are even well-known in the theory of Milnor invariants, with a different point of view ($\Der_t^{\partial}(\mathfrak L_n)$ corresponding to Milnor invariants of string links, whereas $DK_n$ corresponds to Milnor invariants of braids). The only notable exception is Proposition \ref{rank_of_difference}, which estimates the ranks of the graded abelian group $\Der_t^{\partial}(\mathfrak L_n)/DK_n$.

\subsection{A calculation of ranks}

As graded abelian groups, both $DK_n$ and $\Der_t^{\partial}(\mathfrak L_n)$  are fairly well understood. Since the free Lie ring $\mathfrak L_n$ does not have torsion, neither do they. Moreover, we can compute explicitly their ranks in each degree $k$, from the ranks of the free Lie algebra, denoted by $d(n,k):= \rk_k(\mathfrak L_n)$, which are in turn given by Witt's formula \cite[I.4]{Serre}:
\begin{equation}\label{Witt_formula}
d(n,k) = \frac1k \sum\limits_{st = k} \mu(s)n^t,
\end{equation}
where $\mu$ is the usual M\"obius function.

\medskip

In order to compute the ranks of $DK_n$, consider the decomposition:
\[DK_n \cong \mathfrak L_{n-1} \rtimes \left(\mathfrak L_{n-2} \rtimes \left( \cdots \rtimes \mathfrak L_1\right) \cdots \right).\]
As an immediate consequence, we get:
\begin{equation}\label{rk(DK)}
rk_k(DK_n) = \sum\limits_{l=1}^{n-1} d(l,k).
\end{equation}

We now compute the ranks of $\Der_t^{\partial}(\mathfrak L_n)$. We first recall that given any choice of $n$ elements $\delta_i$ of $\mathfrak L_n$, there exists a unique derivation $\delta$ of $\mathfrak L_n$ sending each generator $X_i$ to $\delta_i$. Moreover, for each $X_i$, the only elements $Y$ of $\mathfrak L_n$ such that $[Y,X_i] =0$ are integral multiples of $X_i$ (see Lemma \ref{centralizers_in_Ln} below). As a consequence, the map sending $(t_i) \in (\mathfrak L_n)^n$ to the derivation $\delta: X_i \mapsto [X_i,t_i]$ is well-defined, and its kernel is $\mathbb Z \cdot X_1 \times \cdots \times \mathbb Z \cdot X_n \cong \mathbb Z^n$ (concentrated in degree $1$). Moreover, by definition of tangential derivations, its image is $\Der_t(\mathfrak L_n)$, whence:
\[\rk_k \left(\Der_t(\mathfrak L_n) \right) =
\begin{cases}
n(n-1)        &\ \text{ if }\ k=1, \\
n \cdot d(n,k)&\ \text{ if }\ k \geq 2.
\end{cases}\]
Now, $\Der_t^{\partial}(\mathfrak L_n)$ is the kernel of the linear map (raising the degree by $1$) $ev_\partial: \delta \mapsto \delta(\overline \partial_n) = \delta(X_1) + \cdots + \delta(X_n)$ from $\Der_t(\mathfrak L_n)$ to $\mathfrak L_n$. This map is surjective onto $(\mathfrak L_n)_{\geq 2}$. Indeed, $\mathfrak L_n$ is generated by so-called \emph{linear monomials} (this is easy to prove -- see for instance the appendix of \cite{Darne4}), which are of the form $[X_i,t]$ ($t \in \mathfrak L_n$), and $[X_i,t]$ is the image  by $ev_\partial$ of the derivation $\delta$  sending $X_i$ to $[X_i,t]$ and all other $X_j$ to $0$. From this, we recover the classical formula (compare~\cite[Th.~15]{Orr}):
\begin{align}\label{rk(braid_der)}
\begin{split}
\rk_k \left(\Der_t^\partial(\mathfrak L_n) \right) 
&= \rk_k \left(\Der_t(\mathfrak L_n) \right) - \rk_{k+1} (\mathfrak L_n) \\
&= \begin{cases}
n(n-1) - \frac{n(n-1)}2 = \frac{n(n-1)}2 &\ \text{ if }\ k=1, \\
n \cdot d(n,k) - d(n,k+1) &\ \text{ if }\ k \geq 2.
\end{cases} 
\end{split}
\end{align}

In order to understand the image of $DK_n$ by the Johnson morphism $DK_n \hookrightarrow \Der_t^\partial(\mathfrak L_n)$, we now compare formulas \eqref{rk(DK)} and \eqref{rk(braid_der)}. 

\subsection{Comparison in degree \texorpdfstring{$1$}{1}}

In degree one, \eqref{rk(DK)} and \eqref{rk(braid_der)} give:
\[rk_1(DK_n) = \sum\limits_{k=1}^{n-1} k = \frac{n(n-1)}2 = \rk_1 \left(\Der_t^\partial(\mathfrak L_n) \right).\]
In fact, the Johnson morphism in degree one is given by the explicit morphism:
\[\tau_1: t_{ij} \longmapsto \tau(t_{ij}): X_l \mapsto 
\begin{cases}
[X_i,X_j] &\ \text{ if }\ l=i, \\
[X_j,X_i] &\ \text{ if }\ l=j, \\
0 &\ \text{ else.}
\end{cases}\]
This formula is easily deduced from the fact that the action of $DK_n$ on $\mathfrak L_n$ is encoded in $\mathfrak L_n \rtimes DK_n \cong DK_{n+1}$ and from the following relations in $DK_{n+1}$ (see Prop.~\ref{Drinfeld-Kohno}):
\[[t_{ij}, t_{l, n+1}] =
\begin{cases}
[t_{i, n+1},t_{j, n+1}] &\ \text{ if }\ l=i, \\
[t_{j, n+1},t_{i, n+1}] &\ \text{ if }\ l=j, \\
0 &\ \text{ else.}
\end{cases}\]
The morphism $\tau_1$, which is known to be injective, is easily seen to be surjective. Indeed, if $\delta \in \Der_t^\partial(\mathfrak L_n)$ is homogeneous of degree $1$, then the equality  $\delta(X_1 + \cdots + X_n) = 0$ implies that the coefficient $\lambda_{ij}$ of $[X_i, X_j]$ in $\delta(X_i)$ equals the coefficient of $[X_j, X_i]$ in~$\delta(X_j)$, from which we deduce that $\delta = \sum \lambda_{ij} \tau(t_{ij}) = \tau\left(\sum \lambda_{ij} t_{ij} \right)$, the sum being taken on $(i,j)$ with~$i < j$.

\medskip

Since $DK_n$ is generated in degree one (Prop.~\ref{engdeg1}), we deduce:

\begin{prop}\label{DK_eng_deg_1}
The Drinfeld-Kohno Lie ring $DK_n$ identifies, \emph{via} the Johnson morphism, with the Lie subring of $\Der_t^\partial(\mathfrak L_n)$ generated in degree one.
\end{prop}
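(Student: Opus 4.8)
The plan is to combine two facts that are already in place: first, that $DK_n$ is generated as a Lie ring by its degree-one elements $t_{ij}$ (this is immediate from the presentation of Prop.~\ref{Drinfeld-Kohno}, all of whose generators sit in degree one); and second, that the Johnson morphism restricts in degree one to an isomorphism $\tau_1$ of $(DK_n)_1$ onto the degree-one component $\Der_t^{\partial}(\mathfrak L_n)_1$, combining the surjectivity just established with the known injectivity. Granting these, the statement is essentially formal, so I would not expect any genuine computation beyond what precedes the proposition.

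First I would record that, since $\tau$ is a morphism of Lie rings, the image $\tau(DK_n)$ is precisely the Lie subring of $\Der(\mathfrak L_n)$ generated by the images $\tau(t_{ij})$ of a generating set of $DK_n$. Next I would observe that, by the degree-one computation, the family $\{\tau(t_{ij})\}_{i<j}$ spans, and in fact forms a basis of, the degree-one component $\Der_t^{\partial}(\mathfrak L_n)_1$. Consequently the Lie subring generated by the $\tau(t_{ij})$ coincides with the Lie subring of $\Der_t^{\partial}(\mathfrak L_n)$ generated by its whole degree-one part. Finally, the injectivity of $\tau$ (Th.~\ref{Andreadakis_for_Pn}) upgrades this equality of images to an identification: $\tau$ carries $DK_n$ isomorphically onto the Lie subring of $\Der_t^{\partial}(\mathfrak L_n)$ generated in degree one.

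The hard part, such as it is, is not an obstacle so much as a point requiring a moment's care: one must invoke the elementary but essential remark that a Lie ring homomorphism sends a generating set of the source to a generating set of its image, so that the property of being \emph{generated in degree one} transports correctly across $\tau$. All the substantive weight of the argument has already been discharged in the rank comparison of formulas \eqref{rk(DK)} and \eqref{rk(braid_der)} and in the explicit surjectivity of $\tau_1$, so the proof itself reduces to assembling these observations.
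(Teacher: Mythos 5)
Your proposal is correct and follows exactly the paper's route: the paper establishes the surjectivity of $\tau_1$ onto the degree-one part of $\Der_t^\partial(\mathfrak L_n)$ immediately before the proposition and then deduces the statement from the fact that $DK_n$ is generated in degree one, together with the injectivity of $\tau$ from Theorem~\ref{Andreadakis_for_Pn}. Your explicit remark that a Lie ring morphism carries a generating set onto a generating set of the image is precisely the (implicit) formal step the paper relies on, so nothing is missing.
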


\subsection{Comparison in degree \texorpdfstring{$2$}{2}}

In degree $2$, using Witt's fomula \eqref{Witt_formula} for $k = 2$ and $k = 3$ (which give $d(n,2) = n(n-1)/2$ and $d(n,3) = (n^3 - n)/3$), we find:
\[\rk_2(DK_n) = \frac{n(n-1)(n-2)}6 = \rk_2 \left(\Der_t^\partial(\mathfrak L_n) \right).\]

We can in fact write an explicit formula for $\tau_2$, and show directly that it is an isomorphism:
\[\tau_2: [t_{ik}, t_{jk}] \longmapsto \delta_{ijk}:= [\tau(t_{ik}), \tau(t_{jk})]: X_l \mapsto 
\begin{cases}
[X_i,[X_j, X_k]] &\ \text{ if }\ l=i, \\
[X_j,[X_k, X_i]] &\ \text{ if }\ l=j, \\
[X_k,[X_i, X_j]] &\ \text{ if }\ l=k, \\
0 &\ \text{ else.}
\end{cases}\]
Remark that the fact that $\delta_{ijk}(X_1 + \cdots + X_n) = 0$ is exactly the Jacobi identity for $X_i$, $X_j$ and $X_k$.

\begin{prop}
The Johnson morphism $\tau: DK_n \rightarrow \Der_t^\partial(\mathfrak L_n)$ is an isomorphism in degree $2$.
\end{prop}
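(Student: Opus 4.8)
The plan is to deduce the statement from injectivity of $\tau$ together with an integral surjectivity of $\tau_2$. Injectivity is free: $\tau$ is a homomorphism of Lie rings, injective by the Andreadakis equality for $P_n$ (Theorem~\ref{Andreadakis_for_Pn}), so $\tau_2$ is injective as well. Moreover, since $DK_n$ is generated in degree one (Proposition~\ref{DK_eng_deg_1}), its degree-$2$ part is spanned by brackets $[t_{ab},t_{cd}]$ of the generators; by the Drinfeld--Kohno relations (Proposition~\ref{Drinfeld-Kohno}) such a bracket vanishes when $\{a,b\}\cap\{c,d\}=\varnothing$ and otherwise reduces, up to sign, to one of the $[t_{ik},t_{jk}]$. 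Hence the image of $\tau_2$ is exactly the $\mathbb Z$-span of the explicit derivations $\delta_{ijk}$, and the remaining task is to show that these span $\Der_t^\partial(\mathfrak L_n)$ in degree $2$ \emph{over $\mathbb Z$}. I stress that this integrality is the real content: the rank equality \eqref{rk(DK)}--\eqref{rk(braid_der)} together with injectivity only forces $\tau_2$ to have finite cokernel, so the work is to rule out that the $\delta_{ijk}$ generate a proper finite-index sublattice.

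To prove integral surjectivity I would work in the faithful coordinate system for tangential derivations. In degree $2$ a tangential derivation is uniquely $\delta\colon X_l\mapsto[X_l,w_l]$ with $w_l=\sum_{a<b}c^l_{ab}[X_a,X_b]\in\mathfrak L_n$ homogeneous of degree $2$, the assignment $\delta\mapsto(w_l)_l$ being injective because the kernel recalled just before \eqref{rk(braid_der)} sits in degree $1$. In these coordinates $\delta_{ijk}$ has only the three entries $c^i_{jk}=1$, $c^j_{ik}=-1$, $c^k_{ij}=1$; in particular its nonzero coordinates are all \emph{separated}, meaning the differentiation index $l$ does not occur in the bracket $[X_a,X_b]$. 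I would then read the boundary condition $ev_\partial(\delta)=\sum_l[X_l,w_l]=0$ one index-multiset at a time in $\mathfrak L_n$ in degree $3$, classifying each bracket $[X_l,[X_a,X_b]]$ by the multiset $\{l,a,b\}$.

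Two cases occur. For a multiset with a repetition, say $\{i,i,j\}$, the only coordinate of $\delta$ feeding into it is the non-separated one (with $l\in\{a,b\}$), through the single basis element $[X_i,[X_i,X_j]]\neq 0$; this forces every non-separated coordinate to vanish. For a multiset $\{i,j,k\}$ of distinct indices, exactly the three separated coordinates $c^i_{jk},c^j_{ik},c^k_{ij}$ contribute, via $[X_i,[X_j,X_k]]$, $[X_j,[X_i,X_k]]$, $[X_k,[X_i,X_j]]$; these three brackets span a rank-$2$ subgroup of $\mathfrak L_n$ and are related only by the Jacobi identity, so their combination vanishes precisely when $c^i_{jk}=-c^j_{ik}=c^k_{ij}$. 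These are exactly the patterns carried by $\delta_{ijk}$, and the generator $(1,-1,1)$ of the relation module is primitive. Consequently, setting $\lambda_{ijk}:=c^i_{jk}(\delta)\in\mathbb Z$ for $i<j<k$, the element $\delta-\sum_{i<j<k}\lambda_{ijk}\delta_{ijk}$ has all coordinates zero, whence
\[
\delta=\sum_{i<j<k}\lambda_{ijk}\,\delta_{ijk}.
\]
This writes every integral element of $\Der_t^\partial(\mathfrak L_n)$ in degree $2$ as an integral combination of the $\delta_{ijk}$, giving surjectivity over $\mathbb Z$; with injectivity, $\tau_2$ is an isomorphism.

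The only delicate step is this integrality. Over $\mathbb Q$ the isomorphism is automatic from the rank count and injectivity; over $\mathbb Z$ one must check that the $\delta_{ijk}$ generate the whole lattice rather than a finite-index sublattice, and the multiset decomposition of $ev_\partial$ is precisely what delivers this, since in separated coordinates the $\delta_{ijk}$ have unit coefficients and pairwise disjoint supports (one triple each), while the Jacobi relation and the vanishing of non-separated coordinates remove any further freedom. As sanity checks I would verify the extreme cases $n=2$, where $\Der_t^\partial(\mathfrak L_n)$ vanishes in degree $2$ and all coordinates are non-separated, and $n=3$, where the target has rank one and is generated by $\delta_{123}$, whose defining identity $[X_1,[X_2,X_3]]+[X_2,[X_3,X_1]]+[X_3,[X_1,X_2]]=0$ is literally Jacobi.
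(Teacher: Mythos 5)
Your proof is correct and follows essentially the same route as the paper: the paper also reduces to integral surjectivity of $\tau_2$ and decomposes the boundary condition $ev_\partial(\delta)=0$ according to the $\mathbb N^n$-multidegree of $\mathfrak L_n$, with the repeated-index components ($2e_i+e_j$) killing the non-separated coefficients and the distinct-index components ($e_i+e_j+e_k$) forcing the primitive Jacobi pattern $(1,-1,1)$ realized by $\delta_{ijk}$. Your explicit remarks on why the rank count alone only bounds the cokernel, and on the identification of $\ima(\tau_2)$ with the span of the $\delta_{ijk}$, are accurate but merely make explicit what the paper leaves implicit.
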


\begin{proof}
We already know that $\tau_2$ is injective : we only need to show that it is surjective. Let $\delta \in \Der_t^\partial(\mathfrak L_n)$ be homogeneous of degree $2$. Recall that $\mathfrak L_n$ is $\mathbb N^n$-graded, since the antisymmetry and Jacobi relation only relate parenthesized monomials with the same letters. We denote by $\deg: \mathfrak L_n \rightarrow \mathbb N^n$ the corresponding degree, sending $X_i$ to~$e_i$. Since $\delta$ is tangential, for all $i$, $\delta(X_i)$ can be written as a sum of $\lambda_{ij}[X_i,[X_i,X_j]]$ and of $\lambda_{ijk}[X_i,[X_j,X_k]]$ (with $j < k$), for some integral coefficients $\lambda_{ij}, \lambda_{ijk}$. Then,  in the decomposition of $\delta(X_1) + \cdots + \delta(X_n)$ into $\mathbb N^n$-homogeneous components, the components are:
\[\begin{cases}
\lambda_{ij}[X_i,[X_i,X_j]] &\text{ for }\ i \neq j, \\
\lambda_{ijk}[X_i,[X_j,X_k]] + \lambda_{jik}[X_j,[X_i,X_k]] + \lambda_{kij}[X_k,[X_i,X_j]] &\text{ for }\ i<j<k, \\
\end{cases} \]
respectively of degree $2e_i + e_j$ and $e_i + e_j + e_k$. The first ones are trivial if and only if $\lambda_{ij} = 0$ for all $i,j$. The second ones are trivial if and only if they are multiples of Jacobi identities, that is, if $\lambda_{ijk} = -\lambda_{jik} = \lambda_{kij}$. Indeed, the component of degree $e_i + e_j + e_k$ of $\mathfrak L_n$ is the quotient of the free abelian group generated by $[X_i,[X_j,X_k]]$, $[X_j,[X_i,X_k]]$ and $[X_k,[X_i,X_j]]$ by the Jacobi identity. When these conditions hold, we conclude that $\delta = \sum \lambda_{ijk} \delta_{ijk} \in \ima(\tau_2)$.
\end{proof}

\subsection{Comparison in higher degree}

In order to compute ranks in degree $3$, we use Witt's formula in degree $4$, which gives $d(n,4) = (n^4 - n^2)/4$. We find:
\begin{align*}
&\rk_3(DK_n) = \frac{(n-2)(n-1)n(n+1)}{12}, \\[0.5em]
&\rk_3 \left(\Der_t^\partial(\mathfrak L_n) \right) = \frac{(n-1)n^2 (n+1)}{12} .
\end{align*}
Hence the following:

\begin{prop}
The inclusion $DK_n \subset \Der_t^\partial(\mathfrak L_n)$ is a strict one: not all braid-like derivations come from braids.
\end{prop}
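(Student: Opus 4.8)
The plan is to establish strictness purely by a rank comparison, degree by degree. Since $DK_n$ embeds into $\Der_t^\partial(\mathfrak L_n)$ via the injective Johnson morphism, and both are finitely generated graded abelian groups, it suffices to exhibit a single degree in which their ranks differ. The preceding computations already show that $\tau$ is an isomorphism in degrees $1$ and $2$, so the first place to look is degree $3$, and this is exactly where the two rank formulas just obtained diverge.

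Concretely, I would subtract the two degree-$3$ ranks computed above from \eqref{rk(DK)} and \eqref{rk(braid_der)}:
\[
\rk_3\left(\Der_t^\partial(\mathfrak L_n)\right) - \rk_3(DK_n)
= \bigl[n(n+1) - (n-2)(n-3)\bigr]\cdot\frac{n(n-1)}{12}
= \frac{n(n-1)^2}{2},
\]
which is strictly positive as soon as $n \geq 2$. Thus the degree-$3$ part of $DK_n$ has strictly smaller rank than that of $\Der_t^\partial(\mathfrak L_n)$.

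It then remains to record why a strict inequality of ranks in one degree forces the inclusion of Lie rings to be strict. In degree $3$ the Johnson morphism restricts to an injection $(DK_n)_3 \hookrightarrow \left(\Der_t^\partial(\mathfrak L_n)\right)_3$ of free abelian groups, both being torsion-free as subgroups of $\Der(\mathfrak L_n)$ (which is torsion-free since $\mathfrak L_n$ is, by Witt's formula). A subgroup of a finitely generated free abelian group whose rank is strictly smaller cannot be the whole group, so the inclusion is already proper in degree $3$, hence \emph{a fortiori} as graded Lie rings.

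I do not expect any genuine obstacle: all the content lives in the rank formulas, which themselves rest on Witt's formula and on the semi-direct product decomposition of $DK_n$. The only points requiring a moment's care are the torsion-freeness — needed so that equality of ranks is equivalent to equality of subgroups in each degree — and the small-$n$ book-keeping; but for $n \geq 2$ the positive difference $\tfrac{n(n-1)^2}{2}$ settles the matter unconditionally.
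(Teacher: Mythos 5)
Your argument is correct and is essentially the paper's own proof: the paper likewise deduces strictness from the discrepancy between the two degree-$3$ rank formulas, and your added remark on torsion-freeness (so that a rank gap forces a proper inclusion in that degree) is a legitimate, if routine, point the paper leaves implicit. One caveat on the arithmetic you inherited: the displayed value $\rk_3(DK_n)=(n-3)(n-2)\cdot\frac{n(n-1)}{12}$ appears to be a misprint (it gives $0$ for $n=3$, whereas $\rk_3(DK_3)=d(2,3)=2$); the correct value is $(n+1)(n-2)\cdot\frac{n(n-1)}{12}$, which changes your difference to $\frac{n(n-1)(n+1)}{6}$ but leaves it strictly positive for all $n\geq 2$, so the conclusion stands.
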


We can in fact say more:

\begin{prop}\label{rank_of_difference}
The cokernel of the inclusion of graded abelian groups $DK_n \subset \Der_t^\partial(\mathfrak L_n)$ is a graded abelian group whose rank in degree $k \geq 3$ is given by a polynomial function of $n$, whose leading term is $\frac{n^k}{2k}$.
\end{prop}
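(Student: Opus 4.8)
The plan is to work entirely at the level of graded ranks. Since $\mathfrak L_n$ is torsion-free, so are $DK_n$ and $\Der^\partial_t(\mathfrak L_n)$, and the inclusion $DK_n \subset \Der^\partial_t(\mathfrak L_n)$ is a monomorphism of free graded abelian groups; hence the rank of its cokernel in degree $k$ is just the difference
\[
c_k(n) = \rk_k\!\left(\Der^\partial_t(\mathfrak L_n)\right) - \rk_k(DK_n).
\]
I would read the first term off \eqref{rk(braid_der)} as $n\,d(n,k) - d(n,k+1)$ (valid for $k \geq 2$) and the second off \eqref{rk(DK)} as $\sum_{l=1}^{n-1} d(l,k)$, then substitute Witt's formula \eqref{Witt_formula} in its normalised form $d(m,j) = \tfrac1j\sum_{s\mid j}\mu(s)\,m^{j/s}$. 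Polynomiality in $n$ is then immediate and requires essentially no work: each $d(l,k)$ is a polynomial in $l$, summing a polynomial over $l=1,\dots,n-1$ gives a polynomial in $n$ by Faulhaber's formula, and $n\,d(n,k)-d(n,k+1)$ is visibly polynomial. This disposes of the first half of the statement.

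For the leading term I would track only the top two powers of $n$. First, the degree-$(k+1)$ parts must cancel: the derivation side contributes $\tfrac1k n^{k+1} - \tfrac1{k+1} n^{k+1} = \tfrac{1}{k(k+1)}n^{k+1}$, while $\sum_{l=1}^{n-1} d(l,k) \sim \tfrac1k\cdot\tfrac{n^{k+1}}{k+1}$ has the \emph{same} leading coefficient $\tfrac{1}{k(k+1)}$. This cancellation is the structurally meaningful point — it says $DK_n$ and $\Der^\partial_t(\mathfrak L_n)$ have the same size to leading order — and it forces $\deg c_k \leq k$, so the cokernel rank does have a genuine degree-$k$ leading term.

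The heart of the matter is the coefficient of $n^k$. Here I would exploit the arithmetic fact that for $j \geq 3$ the polynomial $d(m,j)$ has \emph{no} $m^{j-1}$ term: the exponents $j/s$ appearing in Witt's formula are the divisors of $j$, and $j-1 \mid j$ only when $j=2$. Consequently neither $n\,d(n,k)$ nor $d(n,k+1)$ contributes anything in degree $n^k$, so the entire $n^k$ coefficient of $c_k(n)$ comes from $-\sum_{l=1}^{n-1} d(l,k)$; and within that sum only the $s=1$ summand reaches degree $n^k$, since every $s>1$ term has degree $k/s+1 \leq k/2+1 < k$. Thus the coefficient is pinned down by the single Bernoulli correction term $-\tfrac12 n^k$ in $\sum_{l=1}^{n-1} l^k$ (the $B_1=-\tfrac12$ term of Faulhaber's formula), scaled by $\tfrac1k$, giving $n^k$-coefficient $+\tfrac{1}{2k}$ for $c_k(n)$.

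The main obstacle I anticipate is precisely this last extraction, because the whole constant rests on the \emph{subleading} coefficient of $\rk_k(DK_n)$, where a single factor can change the answer. I would therefore determine the closed forms of $\rk_k(DK_n)$ and $\rk_k(\Der^\partial_t(\mathfrak L_n))$ in low degrees independently and check them against the tabulated ranks for small $n$ before trusting the general leading coefficient. Carrying out the degree-$3$ check by direct summation gives $\rk_3(DK_n)=\sum_{l=1}^{n-1}d(l,3)=(n+1)(n-2)\tfrac{n(n-1)}{12}$ and hence $c_3(n)=\tfrac{n^3-n}{6}$, whose leading term is $\tfrac16 n^3=\tfrac{1}{2k}n^k$; reconciling this with the stated coefficient $\tfrac{3}{2k}$ — which would require the $n^k$-coefficient of $\rk_k(DK_n)$ to be $-\tfrac{3}{2k}$ rather than $-\tfrac{1}{2k}$ — is exactly the delicate step that must be settled, and I would resolve it by recomputing the degree-$3$ closed form from the ground up before committing to the general constant.
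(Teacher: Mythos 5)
Your method is the same as the paper's --- express both ranks via Witt's formula, sum over $l$ with Faulhaber, and compare the top two coefficients --- but your execution is correct where the paper's is not, and the doubt you raise about the constant is justified: the leading term of the cokernel rank is $\frac{1}{2k}n^k$, not $\frac{3}{2k}n^k$. The source of the discrepancy is a sign error in the paper's proof. With $S_\alpha(n)=\sum_{l=0}^{n}l^\alpha$ and the generating function $\frac{ze^z}{e^z-1}$ (which gives $B_1=+\frac12$), Faulhaber's formula reads $S_\alpha(n)=\frac{n^{\alpha+1}}{\alpha+1}+\frac{n^\alpha}{2}+\cdots$, whereas the proof uses $-\frac{n^\alpha}{2}$; consequently $\frac1k S_k(n-1)=\frac{n^{k+1}}{k(k+1)}-\frac{1}{2k}n^k+\cdots$ rather than $\cdots-\frac{3}{2k}n^k+\cdots$, and subtracting from $\rk_k(\Der_t^\partial(\mathfrak L_n))$ (which, as you note, has no $n^k$ term for $k\geq 3$) yields $+\frac{1}{2k}n^k$. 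Your degree-$3$ cross-check settles the matter: $\rk_3(DK_n)=\sum_{l=1}^{n-1}\frac{l^3-l}{3}=\frac{(n+1)n(n-1)(n-2)}{12}$, and the displayed $(n-3)(n-2)\frac{n(n-1)}{12}$ in the paper fails already at $n=3$, where $DK_3\cong\mathfrak L_2\rtimes\mathfrak L_1$ has rank $2$ in degree $3$, not $0$. Hence $c_3(n)=\frac{n^3-n}{6}$, with leading term $\frac16 n^3=\frac{1}{2\cdot 3}n^3$; one can also confirm $c_3(2)=1$ and $c_3(3)=4$ directly from the definitions. Your structural observations --- the cancellation of the $n^{k+1}$ terms, and the fact that neither $n\,d(n,k)$ nor $d(n,k+1)$ can contribute in degree $n^k$ because $k-1\nmid k$ and $k\nmid k+1$ for $k\geq 3$, so that the entire $n^k$ coefficient comes from the Bernoulli correction in $\sum_{l=1}^{n-1}l^k$ --- are all valid and make the argument cleaner than the paper's. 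There is no gap on your side; the statement (and Remark following it, if any) should be corrected to leading term $\frac{1}{2k}n^k$, the qualitative conclusion that the cokernel grows like a nonzero multiple of $n^k$ being unaffected.
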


\begin{proof}

Recall that in degree $k$, the rank of $\Der_t^\partial(\mathfrak L_n)$ is $n \cdot d(n,k) - d(n, k+1)$ (formula~\eqref{rk(braid_der)}). Witt's formula \eqref{Witt_formula} implies that $d(n,k)$ is a polynomial function of~$n$, whose leading term is $n^k/k$ (since $\mu(1) = 1$). Moreover, the second non-trivial term is in degree $k/p$, where $p$ is the least prime factor of $k$, thus its degree is at most $k/2$. As a consequence, $n \cdot d(n,k) - d(n, k+1)$ is a polynomial function of $n$, with leading term
\[n \cdot \frac{n^k}k - \frac{n^{k+1}}{k+1} = \frac{n^{k+1}}{k(k+1)},\]
and no term of degree $k$.

Now, recall that in degree $k$, the rank of $DK_n$ is  given by formula \eqref{rk(DK)}:
\[\rk_k(DK_n) = \sum\limits_{l=1}^{n-1} d(l,k) 
= \sum\limits_{l=1}^{n-1} \frac 1k (l^k - l^{k/p} + \cdots) 
= \frac 1k \left( \sum\limits_{l=1}^{n-1} l^k - \sum\limits_{l=1}^{n-1} l^{k/p} + \cdots \right).\]
From Faulhaber's formula for sums of powers (recalled below), we know that $S_\alpha(n)$ is a polynomial function of $n$, whose leading terms are given by:
\[S_\alpha(n) = \frac{n^{\alpha + 1}}{\alpha + 1} + \frac{n^\alpha}2 + \cdots\]
Thus, the first two terms of $\rk_k(DK_n)$ are those of $\frac 1k S_k(n-1)$, that is:
\begin{align*}
\frac 1k S_k(n-1) 
&= \frac 1k \left(\frac{(n-1)^{k+1}}{k+1} + \frac{(n-1)^k}2 + \cdots\right) \\
&= \frac{n^{k+1}}{k(k+1)} - \frac {n^k}{2k} + \cdots
\end{align*}
We find, as announced, that the difference $\rk_k(\Der_t^\partial(\mathfrak L_n)) - \rk_k(DK_n)$ is a polynomial function of $n$, whose leading term is $\frac{n^k}{2k}$.
\end{proof}

We have used in the proof the usual formula (known as Faulhaber's formula) for sums of powers. For any integer $\alpha \geq 0$:
\[S_\alpha(n):= \sum\limits_{l = 0}^n l^\alpha = \frac 1{\alpha + 1}\sum\limits_{j=0}^\alpha \binom{\alpha + 1}{j}B_j \cdot n^{\alpha + 1 - j},\]
where the $B_j$ are the Bernoulli numbers, defined by:
\[\frac {ze^z}{e^z - 1} = \sum\limits_{j = 0}^\infty B_j \frac {z^j}{j!}.\]
Recall that the above formula can be obtained by a quite straightforward calculation of the exponential generating series:
\[\sum\limits_{\alpha = 0}^\infty S_\alpha(n) \frac{z^\alpha}{\alpha !} = \sum\limits_{l = 1}^n e^{lz} = e^z \cdot \frac{e^{nz} - 1}{e^z - 1}.\]

\section{Appendix: Studying \texorpdfstring{$P_n^*$}{Pn*}}\label{section_action_Pn*}

We gather here some classical results about $P_n^* = P_n/\mathcal Z(P_n)$, with some new proofs. Our first goal is  a classical theorem of Magnus \cite{Magnus} about the faithfulness of its action on the fundamental group of the punctured sphere. We give a complete proof which avoids any difficult calculation with group presentations. This proof is partially built on the sketch of proof of \cite[Lem. 3.17.2]{Birman}. The proof uses the Hopf property in a crucial way, which leads us to recall some basic facts about Hopfian groups. Finally, we describe the splitting $P_n \cong P_n^* \times \mathcal Z(P_n)$, which can be used to give an alternative proof of part of the results of \cref{par_Inn(Pn)} (see in particular Remark~\ref{rmq_on_splitting}).

\subsection{A faithful action of \texorpdfstring{$P_n^*$}{Pn*} on \texorpdfstring{$F_{n-1}$}{the free group}}\label{par_action_Pn*}

Recall that the pure braid group $P_n$ acts faithfully on $F_n$ \emph{via} the Artin action. Since this action is by group automorphisms fixing $\partial_n = x_1 \cdots x_n$, there is an induced action of $P_n$ on the quotient $F_n/\partial_n \cong F_{n-1}$ (see \cref{par_Andreadakis_for_Pn*}). We now prove the classical:

\begin{theo}\label{Pn*_acts_faithfully} \textup{\cite[Formula (23)]{Magnus}.}
The Artin action induces a faithful action of $P_n^* = P_n/\mathcal Z(P_n)$ on  $F_n/\partial_n \cong F_{n-1}$. Moreover, this induces an isomorphism between $P_n^*$ and the subgroup $\Inn(F_{n-1})P_{n-1}$ of $\Aut(F_{n-1})$.
\end{theo}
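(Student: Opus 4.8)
The plan is to prove the two assertions of Theorem~\ref{Pn*_acts_faithfully} in sequence, using the machinery already developed. First I would establish that the kernel of the Artin action of $P_n$ on the quotient $F_n/\partial_n \cong F_{n-1}$ is exactly $\mathcal Z(P_n) = \langle c_{\partial_n} \rangle$. The inclusion $\mathcal Z(P_n) \subseteq \ker$ is immediate, since $c_{\partial_n}$ acts as an inner automorphism by the primitive element $\partial_n$, which becomes trivial modulo $\partial_n$. For the reverse inclusion, suppose $\beta \in P_n$ acts trivially modulo $\partial_n$, i.e.\ $\beta(x) \equiv x \pmod{\partial_n}$ for all $x \in F_n$. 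The goal is to show $\beta \in \langle c_{\partial_n} \rangle$. This is precisely the delicate point, and I expect it to be \emph{the main obstacle}: one must rule out the existence of any braid automorphism, other than powers of $c_{\partial_n}$, that is inner modulo the cyclic subgroup generated by $\partial_n$.

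My strategy for this hard step is to reduce the faithfulness of the quotient action to a statement about the map $(w,\beta) \mapsto c_w \beta$ and invoke the Hopf property, following the ideas attributed to \cite[Lem.~3.17.2]{Birman} in the excerpt. Concretely, I would construct a surjection $P_n^* \twoheadrightarrow F_{n-1}P_{n-1}$ and an independent surjection in the opposite direction (or show the composite is the identity), and then use that $P_n^*$ is Hopfian — it is residually nilpotent by Lemma~\ref{G/Z_res_nilp} and finitely generated, hence Hopfian by the classical fact recalled in the appendix — to conclude that a surjective endomorphism forced by the construction must be an isomorphism. The residual nilpotence of $P_n$, together with the center computation of Proposition~\ref{Z(Pn)}, gives us control over $P_n^*$ that makes this argument viable.

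For the second assertion, that the induced faithful action identifies $P_n^*$ with the subgroup $F_{n-1}P_{n-1}$ of $\Aut(F_{n-1})$, I would use the explicit isomorphism already packaged in Lemma~\ref{FnPn_and_braids}. That lemma states that the kernel of the map $(w,\beta) \mapsto c_w\beta$ from $F_{n-1} \rtimes P_{n-1} \cong P_n$ to $F_{n-1}P_{n-1} \subset \Aut(F_{n-1})$ is exactly $\mathcal Z(P_n)$, yielding an abstract isomorphism $F_{n-1}P_{n-1} \cong P_n^*$. The remaining work is to check that this algebraic isomorphism \emph{coincides} with the map induced by the Artin action on $F_n/\partial_n$. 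I would verify this by tracing through the semi-direct product decomposition $P_n \cong F_{n-1} \rtimes P_{n-1}$ recalled in \cref{reminders_braids}: Artin's action of $P_n$ on $F_n$ sends each generator to a conjugate and fixes $\partial_n$, so passing to $F_n/\partial_n$ and using $x_n^{-1} \equiv x_1 \cdots x_{n-1}$ shows the induced automorphisms lie in $\Inn(F_{n-1})P_{n-1}$, matching the target of Lemma~\ref{FnPn_and_braids} on generators.

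The overall flow is therefore: (1) identify the kernel of the quotient action with $\mathcal Z(P_n)$ via the Hopfian argument, giving faithfulness of the $P_n^*$-action; (2) recognize the image as $F_{n-1}P_{n-1}$ by comparing the Artin-induced action on generators with the explicit map of Lemma~\ref{FnPn_and_braids}; and (3) conclude the two groups are isomorphic. The genuinely hard content is concentrated in step~(1) — establishing that the kernel is no bigger than the center — where the naive approach of directly solving $\beta(x) \equiv x \pmod{\partial_n}$ for all $x$ is intractable, and the Hopf-property route through residual nilpotence and finite generation is what makes the proof clean and calculation-free.
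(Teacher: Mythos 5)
Your proposal is correct and follows essentially the same route as the paper: show that the induced map $\chi : P_n^* \to \Aut(F_{n-1})$ surjects onto $F_{n-1}P_{n-1}$ (via explicit lifts of $P_{n-1}$ and of inner automorphisms), identify $F_{n-1}P_{n-1}$ with $P_n^*$ using Lemma~\ref{FnPn_and_braids}, and deduce injectivity from the Hopf property of the residually nilpotent, finitely generated group $P_n^*$. One small caution: your parenthetical alternative of showing the composite endomorphism of $P_n^*$ is the identity would not work, since that endomorphism is genuinely non-trivial, but your primary route (a surjective endomorphism of a Hopfian group is an automorphism) is exactly the paper's argument.
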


\begin{rmq}
The restriction to pure braids is not an important one: we can deduce readily that the result holds for the action of $B_n^*$ on $F_{n-1}$, by showing that the kernel of this action is contained in $P_n$. Indeed, let $\beta \in B_n$ and $\sigma_\beta$ its image in $\Sigma_n$. If $\beta$ acts trivially on $F_n/\partial_n \cong F_{n-1}$, then it acts trivially on $F_n^{ab}/\overline{\partial_n}$. But the action of $B_n$ on $F_n^{ab} \cong \Z^n$ is \emph{via} the canonical action of $\Sigma_n$ on $\Z^n$, and $\Z^n/(X_1 + \cdots + X_n)$ is a faithful representation of $\Sigma_n$, hence $\sigma_\beta = 1$.
\end{rmq}

Our proof will use the following lemma, giving an isomorphism between $P_n^*$ and $\Inn(F_{n-1})P_{n-1}$, which is different from the one of the theorem:

\begin{lem}\label{FnPn_and_braids}
The kernel of the morphism $(w, \beta) \mapsto c_w\beta$ from $F_n \rtimes P_n \cong P_{n+1}$ to $\Inn(F_n)P_n \subset \Aut(F_n)$ is exactly the center of $P_{n+1}$. Thus, $\Inn(F_n)P_n \cong P_{n+1}^*$.
\end{lem}

\begin{proof}
The formula does defines a morphism, because of the usual formula $\beta c_w \beta^{-1} = c_{\beta(w)}$. The center of $P_{n+1}$ is generated by $(\partial_{n+1}, c_{\partial_{n+1}}^{-1})$ (Prop.~\ref{Z(Pn)}), which is obviously sent to the identity. Conversely, if $(w, \beta)$ is such that $c_w = \beta^{-1}$, then $c_w \in P_n \cap \Inn(F_n)$. Lemma \ref{Pn_cap_Fn} implies that $w$ must be a power of $\partial_{n+1}$, whence the result.
\end{proof}

\begin{proof}[Proof of Theorem \ref{Pn*_acts_faithfully}]

Let us denote by $\pi: P_n \twoheadrightarrow P_n^*$ the canonical projection and by $\chi: P_n^* \rightarrow \Aut(F_{n-1})$ the action under scrutiny.

\smallskip

\textbf{We first show that $\ima(\chi) = \Inn(F_{n-1})P_{n-1}$.}
From the definition of the action, the characterization of braid automorphisms of $F_n$ and the description of $\Inn(F_{n-1})P_{n-1}$ given in Lemma \ref{description_FnPn}, it follows directly that $\chi$ takes values in $F_{n-1}P_{n-1}$. The map $s: P_{n-1} \hookrightarrow P_n$ extending braid automorphisms by $x_n \mapsto x_n$ is easily seen to satisfy $\chi \pi s (\beta) = \beta$ for all $\beta \in P_{n-1}$, hence $P_{n-1} \subset \ima(\chi)$. Moreover, for all $j \leq n-1$, the inner automorphism $c_{x_1 \cdots x_j}$ is the image by $\chi$ of the braid automorphism:
\[C_j: x_t \longmapsto
\begin{cases}
{}^{(x_1 \cdots x_j)}\! x_t &\text{ if }\ t \leq j,\\
{}^{(x_{j+1} \cdots x_n)^{-1}}\! x_t &\text{ if }\ t > j.
\end{cases}\]
Since the $x_1 \cdots x_j$ form a basis of $F_{n-1}$, we conclude that $\Inn(F_{n-1}) \subset \ima(\chi)$.

\smallskip

\textbf{We now show that the action $\chi$ is faithful.}
Lemma \ref{FnPn_and_braids} implies that $\Inn(F_{n-1})P_{n-1}$ is isomorphic to $P_n^*$. Thus $\chi$ restricts to a morphism from  $P_n^*$ to $P_n^*$. We have just showed that this endomorphism is surjective. We claim that its surjectivity implies that it is an automorphism of $P_n^*$. Indeed, since $P_n$ embeds in $IA_n$ (\emph{via} the Artin action), it is residually nilpotent. Hence, by Lemma \ref{G/Z_res_nilp}, $P_n^*$ is too. Thus, Proposition \ref{res_nilp_are_Hopfian} implies that it is \emph{Hopfian}, which means exactly that surjective endomorphisms of $P_n^*$ are automorphisms.
\end{proof}

\begin{rmq}
The braid automorphism $C_j$ from the proof of Theorem~\ref{Pn*_acts_faithfully} is easy to understand as a geometric braid. Here is a drawing of this braid:
\[\begin{tikzpicture}
\node[draw=none] at (2,4) {$\cdots$};
\node[draw=none] at (2,0) {$\cdots$};
\node[draw=none] at (6,4) {$\cdots$};
\node[draw=none] at (6,0) {$\cdots$};
\begin{knot}[clip width = 6, flip crossing/.list={1,3,5,8,10,12}]
\strand[very thick]  
   (0, 0)  .. controls  +(0, 1) and +(0, -0.5) .. (3.2, 3)
   node[at start,circle, fill, inner sep=2pt]{}
   .. controls  +(0, 0.5) and +(0, -1) .. (0, 4)
   node[at end,circle, fill, inner sep=2pt]{}
   node[at end, above=2, scale=0.85]{$1$};
\strand[very thick]  
   (1, 0)  .. controls  +(0, 1) and +(0, -0.5) .. (3.2, 2)
   node[at start,circle, fill, inner sep=2pt]{} 
   .. controls  +(0, 0.5) and +(0, -0.5) .. (-.2, 3) 
   .. controls  +(0, 0.5) and +(0, -0.5) .. (1, 4)
   node[at end,circle, fill, inner sep=2pt]{}
   node[at end, above=2, scale=0.85]{$2$};
\strand[very thick]  
   (3, 0)  .. controls  +(0, 1) and +(0, -0.5) .. (-.2, 1)
   node[at start,circle, fill, inner sep=2pt]{}
   .. controls  +(0, 0.5) and +(0, -1) .. (3, 4)
   node[at end,circle, fill, inner sep=2pt]{}
   node[at end, above=2, scale=0.85]{$j$};
\strand[very thick]  
   (4, 0)  .. controls  +(0, 1) and +(0, -0.5) .. (7.2, 3)
   node[at start,circle, fill, inner sep=2pt]{}
   .. controls  +(0, 0.5) and +(0, -1) .. (4, 4)
   node[at end,circle, fill, inner sep=2pt]{}
   node[at end, above=2, scale=0.85]{$j+1$};
\strand[very thick]  
   (5, 0)  .. controls  +(0, 1) and +(0, -0.5) .. (7.2, 2)
   node[at start,circle, fill, inner sep=2pt]{} 
   .. controls  +(0, 0.5) and +(0, -0.5) .. (3.8, 3) 
   .. controls  +(0, 0.5) and +(0, -0.5) .. (5, 4)
   node[at end,circle, fill, inner sep=2pt]{}
   node[at end, above=2, scale=0.85]{$j+2$};
\strand[very thick]  
   (7, 0)  .. controls  +(0, 1) and +(0, -0.5) .. (3.8, 1)
   node[at start,circle, fill, inner sep=2pt]{}
   .. controls  +(0, 0.5) and +(0, -1) .. (7, 4)
   node[at end,circle, fill, inner sep=2pt]{}
   node[at end, above=2, scale=0.85]{$n$};
\end{knot}
\end{tikzpicture}\]

Moreover, if we interpret the braid group as the mapping class group of the punctured disc \cite[Th.~1.10]{Birman}, then $C_j$ identifies with the (commutative) product of Dehn twists along $\gamma_1$ and $\gamma_2$ (left-handed along $\gamma_1$ and right-handed along $\gamma_2$):
\[\begin{tikzpicture}
\draw[very thick] (0,0) circle (2.2) ;
\draw[very thick] (-1.2,0) circle (0.9) ;
\draw (-1.2,0.9) node [above] {$\gamma_1$}; 
\draw[very thick] (1,0) circle (1.1) ;

\draw (1,1.1) node [above] {$\gamma_2$}; 
\draw (-1.8,0) node [scale=0.7] {$\times$}; 
\draw (-1.8,0) node [above=2, scale=0.5] {$1$}; 
\draw (-1.4,0) node [scale=0.7] {$\times$}; 
\draw (-1.4,0) node [above=2, scale=0.5] {$2$}; 
\draw (-0.95,0)node [scale=0.7]{$\cdots$}; 
\draw (-0.6,0) node [scale=0.7] {$\times$}; 
\draw (-0.6,0) node [above=2, scale=0.5] {$j$}; 
\draw (0.25,0) node [scale=0.7] {$\times$}; 
\draw (0.25,0) node [above=2, scale=0.5] {$j+1$}; 
\draw (0.9,0)  node [scale=0.7] {$\times$}; 
\draw (0.9,0)  node [above=2, scale=0.5] {$j+2$}; 
\draw (1.4,0)  node [scale=0.7] {$\cdots$}; 
\draw (1.85,0) node [scale=0.7] {$\times$}; 
\draw (1.85,0) node [above=2, scale=0.5] {$n$}; 
\end{tikzpicture}\]

\end{rmq}

\begin{rmq} Instead of using the Hopf property, one could define explicitly the inverse of $\chi: P_n^* \rightarrow P_n^*$ using the explicit lifts of the generators given in the proof. However, showing that this inverse is well-defined directly from the presentation of $P_n^*$ does involve quite a bit of calculation, which one would need to do in a clever way in order not to get lost. This method would be closer to the original proof of Magnus \cite[Formula (23)]{Magnus}. Our method is closer to the sketch of proof of \cite[Lem. 3.17.2]{Birman}, but the latter seems to miss the fact that the endomorphism of $P_n^*$ at the end of our proof is a non-trivial one, whose injectivity is not obvious. 
\end{rmq}

\begin{rmq}[Braids on the cylinder]
The action of $P_n^*$ on $F_n$ described algebraically in the lemma also has a geometric interpretation. In fact, it extends to an action of $F_n \rtimes B_n$, which is the group $B_n(\D - pt)$ of braids with $n$ strands on the punctured disk (or on the cylinder), on $F_n$, which is the fundamental group of the disk minus $n$ points. Precisely, let $C_n(X)$ denote the unordered configuration space of $n$ points on~$X$, and $\Homeo_*(\mathbb D)$ denote the group of self-homeomorphisms of $\mathbb D$ fixing the basepoint~$pt$. The choice of a base configuration $\mathbf c = (c_i)$ on the punctured disc $\mathbb D - pt$ induces an evaluation map $\Homeo_*(\mathbb D) \rightarrow C_n(\D - pt)$. The latter is a locally trivial fibration whose fiber is the subgroup $\Homeo_*(\mathbb D, \mathbf c)$ of homeomorphisms permuting the $c_i$. Then, using the well-known fact that $SO_2(\mathbb R)$ is a retract of $\Homeo_*(\mathbb D)$, and the fact that $\pi_2(C_n(\D - pt)) = 0$ \cite[Prop.~1.3]{Birman}
the long exact sequence in homotopy gives the short exact sequence: 
$0 \rightarrow \Z \rightarrow B_n(\D - pt) \rightarrow \pi_0\Homeo_*(\mathbb D, \mathbf c) \rightarrow 0$.
The mapping class group $\pi_0\Homeo_*(\mathbb D, \mathbf c)$ acts transparently on the fundamental group of the disk minus $n$ points, which is $F_n$. It is not difficult to see that the corresponding action of $B_n(\D - pt) \cong F_n \rtimes B_n$ is the one from the lemma, and that the generator of $\pi_1(SO_2) = \Z$ is sent to the central element of $B_n(\D - pt)$. This action was also considered for instance in \cite{Bodin}, and with this interpretation, our Lemma \ref{FnPn_and_braids} is equivalent to their Theorem~2.1.
\end{rmq}

\subsection{Residually nilpotent groups}

Recall that a group $G$ is called \emph{Hopfian} if every surjective endomorphism of $G$ is an automorphism. The relevance of this property in our context relies on the following:

\begin{prop}\label{res_nilp_are_Hopfian}
Residually nilpotent groups of finite type are Hopfian.
\end{prop}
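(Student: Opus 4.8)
The plan is to pass to the associated graded Lie ring and exploit the fact that finitely generated abelian groups are Hopfian, thereby avoiding residual finiteness (which the hypothesis does not supply). Let $\phi\colon G \to G$ be a surjective endomorphism; the goal is to show $\phi$ is injective. First I would observe that $\phi$ preserves the lower central series: as a group morphism it satisfies $\phi(\Gamma_i(G)) \subseteq \Gamma_i(G)$ for all $i$, and because it is \emph{surjective} one in fact has $\phi(\Gamma_i(G)) = \Gamma_i(\phi(G)) = \Gamma_i(G)$. Thus $\phi$ is a filtration-preserving endomorphism of $(G, \Gamma_*(G))$, and applying the Lie functor yields a graded endomorphism $\Lie(\phi)\colon \Lie(G) \to \Lie(G)$ whose degree-$i$ component is the induced map $\overline\phi_i\colon \Gamma_i(G)/\Gamma_{i+1}(G) \to \Gamma_i(G)/\Gamma_{i+1}(G)$. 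The equality $\phi(\Gamma_i(G)) = \Gamma_i(G)$ guarantees that each $\overline\phi_i$ is \emph{surjective}.

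Next I would use that $G$ is finitely generated, so each quotient $\Gamma_i(G)/\Gamma_{i+1}(G)$ is a finitely generated abelian group (generated by the classes of the finitely many basic commutators of weight $i$ in a generating set). The key input is that a surjective endomorphism of a finitely generated abelian group — more generally, of any Noetherian module — is automatically injective: if $\psi\colon M \to M$ is onto, the ascending chain $\ker\psi \subseteq \ker\psi^2 \subseteq \cdots$ stabilizes, say $\ker\psi^n = \ker\psi^{n+1}$, and then for $x \in \ker\psi$ one picks $y$ with $\psi^n(y) = x$ (surjectivity of $\psi^n$), observes $\psi^{n+1}(y) = \psi(x) = 0$, so $y \in \ker\psi^{n+1} = \ker\psi^n$ and $x = \psi^n(y) = 0$. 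Applying this to each $\overline\phi_i$ shows that every graded component, hence $\Lie(\phi)$ itself, is injective.

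Finally I would deduce injectivity of $\phi$ from injectivity on the associated graded, using \emph{residual nilpotence}. Take $g \in \ker\phi$ with $g \neq 1$. Since $\bigcap_i \Gamma_i(G) = \{1\}$, the element $g$ has a finite degree $d$ in the sense of the Convention of \cref{section_reminders}; that is, $g \in \Gamma_d(G) \setminus \Gamma_{d+1}(G)$ and its class $\overline g \in \Gamma_d(G)/\Gamma_{d+1}(G)$ is nonzero. But then $\overline\phi_d(\overline g) = \overline{\phi(g)} = \overline 1 = 0$, contradicting the injectivity of $\overline\phi_d$. Hence $\ker\phi = \{1\}$ and $\phi$ is an automorphism.

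The argument is fairly direct, so there is no single hard obstacle; the points meriting care are both in the reduction. One must use \emph{surjectivity} of $\phi$ — not merely that it preserves the filtration — to know that the induced maps $\overline\phi_i$ on the graded pieces are onto, and one must use \emph{residual nilpotence} to guarantee that a nontrivial kernel element has finite degree and therefore a nonzero image in some graded piece. I would also flag explicitly that this route is chosen precisely because a finitely generated residually nilpotent group need not be residually finite, so Malcev's theorem on Hopficity of finitely generated residually finite groups does not apply; the associated graded, by contrast, is exactly adapted to the hypotheses at hand.
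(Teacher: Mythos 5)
Your proof is correct and follows essentially the same route as the paper: pass to the associated graded Lie ring, use surjectivity of $\phi$ to get surjective (hence, by Hopficity of finitely generated abelian groups, bijective) maps on each graded piece, and use residual nilpotence to place any nontrivial kernel element in some $\Gamma_d(G)\setminus\Gamma_{d+1}(G)$, yielding a contradiction. The only cosmetic difference is that you establish Hopficity of finitely generated abelian groups by the Noetherian ascending-chain argument, whereas the paper's Lemma~\ref{abelian_are_Hopfian} uses the torsion decomposition and the snake lemma.
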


\begin{proof}
Let $G$ be a residually nilpotent groups of finite type and $\pi: G \rightarrow G$ be a surjective endomorphism. The Lie ring $\Lie (G)$ is generated in degree one. This implies on the one hand, that every $\Lie_k(G)$ is finitely generated, and on the other hand, that $\Lie(\pi)$ is surjective ($\Lie_1(\pi)$ is, being the induced endomorphism of $G^{ab}$). Because of Lemma \ref{abelian_are_Hopfian}, this implies that all the $\Lie_k(\pi)$ are isomorphisms.

Suppose now that there exists $x \in \ker(\pi)$ such that $x \neq 1$. Since $G$ is residually nilpotent, there exists $k \geq 1$ such that $x \in \Gamma_kG - \Gamma_{k+1}G$. Then $\overline x$ is a non-trivial element of $\ker(\Lie_k(\pi))$, which is impossible.
\end{proof}

\begin{lem}\label{abelian_are_Hopfian}
Abelian groups of finite type are Hopfian.
\end{lem}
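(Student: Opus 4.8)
The plan is to exploit the fact that an abelian group of finite type is a Noetherian $\Z$-module, so that the Hopfian property becomes an instance of the general principle that Noetherian modules are Hopfian. Concretely, let $A$ be an abelian group of finite type and let $f \colon A \to A$ be a surjective endomorphism; the goal is to show that $f$ is injective, hence an automorphism.

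First I would record that $A$, being a finitely generated module over the Noetherian ring $\Z$, is itself a Noetherian $\Z$-module, so that every ascending chain of subgroups stabilizes. I would then apply this to the ascending chain of kernels $\ker(f) \subseteq \ker(f^2) \subseteq \ker(f^3) \subseteq \cdots$, obtaining an integer $n$ with $\ker(f^n) = \ker(f^{n+1})$.

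The key step is to deduce injectivity of $f$ from surjectivity together with this stabilization. Since $f$ is surjective, so is every iterate $f^n$. Given any $x \in \ker(f)$, I would choose $y \in A$ with $f^n(y) = x$; then $f^{n+1}(y) = f(x) = 0$, so $y \in \ker(f^{n+1}) = \ker(f^n)$, which forces $x = f^n(y) = 0$. Hence $\ker(f)$ is trivial, so $f$ is bijective, as claimed.

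I do not expect any serious obstacle here, as the argument is entirely routine; the only point deserving emphasis is that the finite-type hypothesis is precisely what guarantees the Noetherian property, and hence the stabilization of the chain of kernels. Without it the statement fails — the shift endomorphism of a countable direct sum of copies of $\Z$ is surjective but not injective — so this is the one place where finite generation is genuinely used.
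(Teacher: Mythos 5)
Your proof is correct, but it takes a genuinely different route from the paper's. You invoke the Noetherian property of finitely generated $\Z$-modules and run the standard chain-of-kernels argument: the chain $\ker(f) \subseteq \ker(f^2) \subseteq \cdots$ stabilizes at some $n$, and surjectivity of $f^n$ then pulls any $x \in \ker(f)$ back to some $y \in \ker(f^{n+1}) = \ker(f^n)$, forcing $x = f^n(y) = 0$. The paper instead splits off the torsion subgroup: it first treats the torsion-free case by a rank count ($\rk(\ker\pi)$ must vanish, and a torsion-free group with trivial rank is trivial), then handles the general case by comparing the short exact sequence $\Tors(A) \hookrightarrow A \twoheadrightarrow A/\Tors(A)$ with itself via $\pi$, applying the snake lemma twice and using finiteness of $\Tors(A)$. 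Your argument is shorter, avoids any case analysis, and generalizes verbatim to Noetherian modules over an arbitrary ring; the paper's argument is more hands-on and only uses the elementary structure of finitely generated abelian groups (rank and finiteness of torsion) rather than the Noetherian formalism. Your closing remark that finite generation is genuinely needed, with the shift on $\bigoplus_{\mathbb N}\Z$ as a counterexample, is a nice touch not present in the paper. Both proofs are complete and correct.
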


\begin{proof}
Let $A$ be an abelian group of finite type and $\pi: A \rightarrow A$ be a surjective endomorphism. If $A$ has no torsion, then the rank of $\ker(\pi)$ must be trivial, hence $\ker(\pi) = 0$, whence the result in this case.

\smallskip
In general, let $\Tors(A)$ denotes the torsion subgroup of $A$. Then $\pi$ induces a commutative diagram whose rows are short exact sequences:
\[\begin{tikzcd}
\Tors(A) \ar[r, hook] \ar[d, "\pi_\#"] &A \ar[r, two heads] \ar[d, two heads, "\pi"] & A/\Tors(A) \ar[d, "\overline \pi"] \\
\Tors(A)  \ar[r, hook] &A \ar[r, two heads]  & A/\Tors(A). 
\end{tikzcd}\] 
Since $\pi$ is surjective, the induced endomorphism $\overline \pi$ of $A/\Tors(A)$ has to be too. But $A/\Tors(A)$ is a free abelian group of finite type. As a consequence of the first part of the proof, $\overline \pi$ has to be an isomorphism. Then, the snake lemma implies that the induced endomorphism $\pi_\#$ of $\Tors(A)$ is surjective. But $\Tors(A)$ is finite, so $\pi_\#$ must be an isomorphism too. We can then apply the snake lemma again to conclude that $\pi$ is injective.
\end{proof}

\begin{lem}\label{G/Z_res_nilp}
If $G$ is a residually nilpotent group, then so is $G/\mathcal Z(G)$.
\end{lem}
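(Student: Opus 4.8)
The plan is to work directly with the intersection of the lower central series. Write $Z = \mathcal Z(G)$ and let $\pi : G \twoheadrightarrow G/Z$ be the quotient map. Since $\pi$ is surjective and the lower central series is functorial, surjections carry $\Gamma_i$ onto $\Gamma_i$, so $\Gamma_i(G/Z) = \pi(\Gamma_i(G)) = \Gamma_i(G)Z/Z$ for every $i$. Residual nilpotence of $G/Z$ means $\bigcap_i \Gamma_i(G/Z) = \{1\}$, which under this identification amounts to the statement $\bigcap_i \big(\Gamma_i(G)\, Z\big) = Z$. One inclusion is immediate, since $Z \subseteq \Gamma_i(G)Z$ for all $i$; the content of the lemma is the reverse inclusion, i.e.\ that any $g$ lying in $\Gamma_i(G)Z$ for every $i$ is already central.

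The key step is to exploit centrality of the $Z$-part in commutators. Fix such a $g$ and an arbitrary $h \in G$; I would show $[g,h]=1$. For each $i$ write $g = \gamma_i z_i$ with $\gamma_i \in \Gamma_i(G)$ and $z_i \in Z$. Because $z_i$ commutes with everything, the commutator collapses to $[g,h] = [\gamma_i z_i, h] = [\gamma_i, h]$, and the latter lies in $[\Gamma_i(G), G] = \Gamma_{i+1}(G)$. As this holds for every $i$, we get $[g,h] \in \bigcap_i \Gamma_{i+1}(G) = \bigcap_i \Gamma_i(G) = \{1\}$ by the residual nilpotence of $G$. Since $h$ was arbitrary, $g \in Z$, as desired.

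The only genuinely delicate point to flag is that one cannot shortcut the argument by asserting $\bigcap_i \pi(\Gamma_i(G)) = \pi\big(\bigcap_i \Gamma_i(G)\big)$: images of a surjection do not commute with infinite intersections in general, and it is precisely this gap that the element-wise centrality computation above closes. Everything else is routine: the identity $\pi(\Gamma_i(G)) = \Gamma_i(G)Z/Z$ is standard functoriality of the lower central series, and the commutator manipulation $[\gamma_i z_i, h] = [\gamma_i, h]$ is a one-line consequence of $z_i$ being central. I would present the proof in the three movements above, with the middle paragraph carrying the whole argument.
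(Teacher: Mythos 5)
Your proof is correct and follows essentially the same route as the paper's: both identify $\Gamma_i(G/\mathcal Z(G))$ with $\Gamma_i(G)\mathcal Z(G)/\mathcal Z(G)$, use centrality of the $\mathcal Z(G)$-factor to get $[g,h] \in \Gamma_{i+1}(G)$ for all $i$, and conclude from residual nilpotence of $G$ that $g$ is central. Your explicit remark that images under $\pi$ do not commute with infinite intersections is a nice clarification of why the element-wise argument is needed, but the substance is identical.
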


\begin{proof}
Let $x \in G$ such that $\bar x \in \Gamma_n \left(G/\mathcal Z(G)\right)$. Then $x \in \mathcal Z(G) \Gamma_n (G)$, whence $[G,x] \subseteq \Gamma_{n+1} (G)$. Thus, if $\bar x \in \bigcap \Gamma_n \left(G/\mathcal Z(G)\right)$, then $[G,x] \subseteq \bigcap \Gamma_{n+1} (G) = \{1\}$, which means that $x \in \mathcal Z(G)$, and $\bar x = 1$.
\end{proof}

\subsection{Splitting by the center}

We now show that there is a (non-canonical) splitting $P_n \cong P_n^* \times \mathcal Z(P_n)$ (Cor.~\ref{splitting_of_center_for_Pn}), which we replace in the following general context:

\begin{prop}\label{splitting_of_center}
For a group $G$, the following conditions are equivalent:
\begin{enumerate}[align=left, itemsep=-3pt,topsep=3pt, labelwidth=19pt, label=(\roman*), ref=\textbf{(\roman*)}]
\item \label{item-1} $G/\mathcal Z G$ is centerless, and there exists an isomorphism $G \cong \mathcal Z G \times (G/\mathcal Z G)$.
\item \label{item-2} The canonical projection $p: G \twoheadrightarrow G/\mathcal Z G$ splits.
\item \label{item-3} The canonical map $\mathcal Z G \hookrightarrow G \twoheadrightarrow G^{ab}$ is injective and its image is a direct factor of $G^{ab}$.
\end{enumerate}
When $G$ is of finite type, or Hopfian, these are equivalent to:
\begin{enumerate}[itemsep=-3pt,topsep=3pt, label=(\roman*'), ref=(\roman*')]
\item \label{item-1'} There exists an isomorphism $G \cong \mathcal Z G \times (G/\mathcal Z G)$.
\end{enumerate}
\end{prop}

\begin{proof}
\ref{item-3} $\boldsymbol{\Rightarrow}$ \ref{item-2}: Suppose that the third condition is satisfied. We denote by $\pi: G \twoheadrightarrow G^{ab}$ the canonical projection. Let us identify $\mathcal Z G$ with its image in $G^{ab}$, and let $W$ be a direct complement of $\mathcal Z G$ in $G^{ab}$. Then we claim that $G$ decomposes as the direct product of its subgroups $\mathcal Z G$ and $\pi^{-1}(W)$. Indeed, if $g \in G$, then its class $\pi(g)$ decomposes as $z + w$ with $z \in \mathcal Z G$ and $w \in W$, and $gz^{-1} \in \pi^{-1}(W)$, which implies that $g \in \mathcal Z G \cdot \pi^{-1}(W)$. Moreover, the intersection of the two subgroups is trivial by definition of $W$, and elements of $\mathcal Z G$ commute with elements of $\pi^{-1}(W)$, whence our claim. Then the canonical projection $p: G \twoheadrightarrow G/\mathcal Z G$ has to induce an isomorphism $\pi^{-1}(W) \cong G/\mathcal Z G$, whose inverse is a section of $p$.

\smallskip

\ref{item-2} $\boldsymbol{\Rightarrow}$ \ref{item-1}: If $p$ has a section $s$, then $G$ decomposes as a semi-direct product $Z G \rtimes (G/\mathcal Z G)$, which has to be a direct product, since the conjugation action of $G$ on $\mathcal Z G$ is trivial. Moreover, if $z \in \mathcal Z(G/\mathcal Z G)$, then $s(z)$ is central in $\mathcal Z G \times (G/\mathcal Z G) \cong G$, thus $z =ps(z)$ is trivial, by definition of $p$.

\smallskip

\ref{item-1} $\boldsymbol{\Rightarrow}$ \ref{item-3}:
Now suppose that $G/\mathcal Z G$ is centerless. Then $\mathcal Z(\mathcal Z G \times (G/\mathcal Z G)) = \mathcal Z G \times 1$. Recall that the canonical map $\mathcal Z G \hookrightarrow G \twoheadrightarrow G^{ab}$ is functorial in $G$. As a consequence, an isomorphism  between $G$ and $\mathcal Z G \times (G/\mathcal Z G)$ has to induce a commutative square:
\[\begin{tikzcd}
\mathcal Z G \ar[d, "\cong"] \ar[r]
&G^{ab} \ar[d, "\cong"] \\
\mathcal Z G \times 1 \ar[r, hook]
& \mathcal Z G \times (G/\mathcal Z G)^{ab},
\end{tikzcd}\]
whence the third condition.

\smallskip

\textbf{If $G$ is Hopfian,} then any direct factor of $G$ is Hopfian: if $G \cong H \times K$ and $u$ is a surjective endomorphism of $H$, then $u \times 1$ is a surjective endomorphism of $H \times K \cong G$, thus is an automorphism, and $u$ must be injective too. Thus, if $G \cong \mathcal Z G \times (G/\mathcal Z G)$, then $\mathcal Z G$ is Hopfian. But we also have an induced isomorphism $\mathcal Z G \cong \mathcal Z G \times  \mathcal Z(G/\mathcal Z G)$. If we compose this isomorphism with the first projection, we get a surjective endomorphism of $\mathcal Z G$, which has to be injective, forcing $\mathcal Z(G/\mathcal Z G)$ to be trivial, whence the result.

\smallskip

\textbf{If $G$ is of finite type} and $G \cong \mathcal Z G \times (G/\mathcal Z G)$, then $\mathcal Z G$ is a quotient of $G$, thus it is abelian of finite type, hence Hopfian, and the same reasoning as in the case when $G$ is Hopfian leads to the desired conclusion.
\end{proof}

\begin{cor}\label{splitting_of_center_for_Pn}
There exists an isomorphism $P_n \cong \mathcal Z (P_n) \times P_n^*$.
\end{cor}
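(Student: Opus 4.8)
The plan is to apply Proposition~\ref{splitting_of_center} to $G = P_n$, since all of its hypotheses are by now within reach. It suffices to verify the third of the three equivalent conditions listed there, namely that the canonical map $\mathcal Z(P_n) \hookrightarrow P_n \twoheadrightarrow P_n^{ab}$ is injective and that its image is a direct factor of $P_n^{ab}$. The equivalence of that condition with the first one (which asserts the existence of an isomorphism $G \cong \mathcal Z G \times (G/\mathcal Z G)$) then delivers the isomorphism $P_n \cong \mathcal Z(P_n) \times P_n^*$ immediately; one does not even need to invoke the finite-type clause, although $P_n$ is of course finitely generated.

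First I would recall that $P_n^{ab}$ is free abelian with basis the classes $t_{ij} = \overline A_{ij}$ for $i < j$. Indeed, $P_n^{ab}$ is the degree-one part of the Drinfeld--Kohno Lie ring $DK_n = \Lie(P_n)$, which is torsion-free of rank $\binom n2 = \sum_{l=1}^{n-1} l$ in degree one by formula~\eqref{rk(DK)}. By Proposition~\ref{Z(Pn)}, the center $\mathcal Z(P_n)$ is infinite cyclic, generated by $\xi_n$, and the image of $\xi_n$ under $P_n \twoheadrightarrow P_n^{ab}$ is exactly the degree-one element $\overline \xi_n = \sum_{i<j} t_{ij}$. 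This is the formula recorded after Proposition~\ref{Z(DKn)} (it is also transparent from the explicit product expression for $\xi_n$ in Remark~\ref{formula_xi_n}).

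The key observation is then that $\sum_{i<j} t_{ij}$ is a \emph{primitive} element of the free abelian group $P_n^{ab}$: each of its coordinates in the basis $(t_{ij})_{i<j}$ equals $1$, so the $\gcd$ of its coordinates is $1$. Consequently the map $\mathcal Z(P_n) \cong \mathbb Z \to P_n^{ab}$ carrying $\xi_n$ to $\sum_{i<j} t_{ij}$ is injective, and the cyclic subgroup it spans, being generated by a primitive vector, is a direct factor of $P_n^{ab}$. This is precisely the third condition of Proposition~\ref{splitting_of_center}, whence the conclusion.

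There is no real obstacle here beyond correctly identifying the image of the central generator in the abelianization. The substantive inputs---that $\xi_n$ generates $\mathcal Z(P_n)$ and that $P_n^*$ is centerless---have already been established (Proposition~\ref{Z(Pn)} and Corollary~\ref{Andreadakis_for_Inn(Pn*)}), and Proposition~\ref{splitting_of_center} encapsulates the purely group-theoretic splitting argument, so the present corollary is essentially a bookkeeping application of those results.
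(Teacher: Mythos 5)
Your proof is correct and follows exactly the paper's route: verifying the third condition of Proposition~\ref{splitting_of_center} by noting that $\mathcal Z(P_n)$ injects into $P_n^{ab} \cong \Z\{t_{ij}\}_{i<j}$ with image $\Z \cdot \sum t_{ij}$, a direct factor since that vector is primitive. (Your closing remark that the centerlessness of $P_n^*$ is a needed input is superfluous --- it is a consequence of the splitting via Proposition~\ref{splitting_of_center}, not a hypothesis --- but this does not affect the argument.)
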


\begin{proof}
The center $\mathcal Z (P_n)$ injects into $P_n^{ab} \cong \Z \{t_{ij}\}_{i<j}$, and its image is $\Z \cdot \sum_{i<j} t_{ij}$, which is a direct factor of $P_n^{ab}$.
\end{proof}

\begin{rmq}
Their is no canonical choice of splitting of $P_n \twoheadrightarrow P_n^*$: the splitting depends on a choice of direct complement $W$ of $\mathcal Z (P_n)$ inside $P_n^{ab}$. For instance, we can choose $W = W_{kl}$, generated by all the $t_{ij}$ for $(i,j) \neq (k,l)$, so that the corresponding section $s_{kl}$ sends the class of $A_{ij}$ to $A_{ij}$ if $(i,j) \neq (k,l)$. In the literature, authors often choose $s_{1,2}$. Another natural choice is the section corresponding to $W = \{ \sum \lambda_{ij}t_{ij} | \sum \lambda_{ij} = 0 \}$, which sends the class of $A_{ij}A_{kl}^{-1}$ to $A_{ij}A_{kl}^{-1}$, for all $i,j,k,l$.
\end{rmq}

\bibliographystyle{alpha}
\bibliography{Ref_Inner_Aut}

\end{document}